\numberwithin{equation}{section}
\newtheorem{theorem}{Theorem}[section]
\newtheorem{lemma}[theorem]{Lemma}
\newtheorem{proposition}[theorem]{Proposition}
\newtheorem{rem}[theorem]{Remark}
\newtheorem{conjecture}[theorem]{Conjecture}
\newcommand{\ind}{\mathbf{1}}
\renewcommand{\tilde}{\widetilde}
\newcommand{\cA}{{\ensuremath{\mathcal A}} }
\newcommand{\cH}{{\ensuremath{\mathcal H}} }
\newcommand{\cZ}{{\ensuremath{\mathcal Z}} }
\newcommand{\bP}{{\ensuremath{\mathbf P}} }
\newcommand{\bE}{{\ensuremath{\mathbf E}} }
\DeclareMathSymbol{\leqslant}{\mathalpha}{AMSa}{"36} % nicer `smaller or equal'
\DeclareMathSymbol{\geqslant}{\mathalpha}{AMSa}{"3E} % nicer `larger or equal'
\DeclareMathSymbol{\eset}{\mathalpha}{AMSb}{"3F}     % nicer `emptyset'
\renewcommand{\leq}{\;\leqslant\;}                   % redef. of < or =
\renewcommand{\geq}{\;\geqslant\;}                   % redef. of > or =
\newcommand{\dd}{\,\text{\rm d}}             % a straight d for differentials
\newcommand{\maxtwo}[2]{\max_{\substack{#1 \\ #2}}} % max with 2 lines
\newcommand{\suptwo}[2]{\sup_{\substack{#1 \\ #2}}} % sup with 2 lines
\newcommand{\sumtwo}[2]{\sum_{\substack{#1 \\ #2}}} % sum with 2 lines
\newcommand{\limtwo}[2]{\lim_{\substack{#1 \\ #2}}}     % \lim with 2 lines
\newcommand{\bbE}{{\ensuremath{\mathbb E}} }
\newcommand{\bbN}{{\ensuremath{\mathbb N}} }
\newcommand{\bbP}{{\ensuremath{\mathbb P}} }
\newcommand{\bbQ}{{\ensuremath{\mathbb Q}} }
\newcommand{\bbR}{{\ensuremath{\mathbb R}} }
\newcommand{\bbZ}{{\ensuremath{\mathbb Z}} }
\newcommand{\ga}{\alpha}
\newcommand{\gb}{\beta}
\newcommand{\gd}{\delta}
\newcommand{\gep}{\varepsilon}       % \ge already exists...
\newcommand{\gD}{\Delta}
\newcommand{\go}{\omega}
\newcommand{\gl}{\lambda}
\def\captionfont@{\footnotesize}
\def\captionheadfont@{\scshape}
\long\def\@makecaption#1#2{%
  \vspace{2mm}
  \setbox\@tempboxa\vbox{\color@setgroup
    \advance\hsize-6pc\noindent
    \captionfont@\captionheadfont@#1\@xp\@ifnotempty\@xp
        {\@cdr#2\@nil}{.\captionfont@\upshape\enspace#2}%
    \unskip\kern-6pc\par
    \global\setbox\@ne\lastbox\color@endgroup}%
  \ifhbox\@ne % the normal case
    \setbox\@ne\hbox{\unhbox\@ne\unskip\unskip\unpenalty\unkern}%
  \fi
  \ifdim\wd\@tempboxa=\z@ % this means caption will fit on one line
    \setbox\@ne\hbox to\columnwidth{\hss\kern-6pc\box\@ne\hss}%
  \else % tempboxa contained more than one line
    \setbox\@ne\vbox{\unvbox\@tempboxa\parskip\z@skip
        \noindent\unhbox\@ne\advance\hsize-6pc\par}%
\fi
  \ifnum\@tempcnta<64 % if the float IS a figure...
    \addvspace\abovecaptionskip
    \moveright 3pc\box\@ne
  \else % if the float IS NOT a figure...
    \moveright 3pc\box\@ne
    \nobreak
    \vskip\belowcaptionskip
  \fi
\relax
}
\def\writefig#1 #2 #3 {\rlap{\kern #1 truecm
\raise #2 truecm \hbox{#3}}}
\newcommand{\tf}{\textsc{f}}
\begin{document}

\title[The disordered generalized Poland-Scheraga model]{Disorder and denaturation transition in the generalized Poland-Scheraga model}

\author{Quentin Berger}
\address{
  Sorbonne Universit\'e, Laboratoire de Probabilit{\'e}s Statistique et Mod\'elisation, UMR 8001, 
            F- 75205 Paris, France
}

\author{Giambattista Giacomin}
\address{
  Universit\'e Paris Diderot, Sorbonne Paris Cit\'e,   Laboratoire de Probabilit{\'e}s Statistique et Mod\'elisation, UMR 8001,
            F- 75205 Paris, France
}

\author{Maha Khatib}
\address{
Universit\'e Libanaise, Laboratoire de Math\'ematiques-EDST, Beyrouth, Liban 
            }

\begin{abstract}
We investigate the generalized Poland-Scheraga model, which is used in the bio-physical literature to model the DNA denaturation transition, in the case where the two strands are allowed to be non-complementary (and to have different lengths). The homogeneous model was recently studied from a mathematical point of view in \cite{cf:GK,cf:BGK}, via a $2$-dimensional renewal approach, with a \emph{loop exponent} $2+\alpha$ (${\alpha>0}$): it was found to undergo a localization/delocalization phase transition of order $\nu = \min(1,\alpha)^{-1}$, together with -- in general -- other phase transitions. In this paper, we turn to the disordered model, and we address the question of the influence of disorder on the denaturation phase transition, that is %to know 
whether adding an arbitrarily small amount of disorder (\textit{i.e.}\ inhomogeneities) affects the critical properties of this transition. Our results are consistent with Harris' predictions for $d$-dimensional disordered systems (here $d=2$). First, we prove that when $\ga<1$ (\textit{i.e.}\ $\nu>d/2$), then disorder is \emph{irrelevant}: the quenched and annealed critical points are equal, and the disordered denaturation phase transition is also of order $\nu=\alpha^{-1}$. On the other hand, when $\alpha>1$, disorder is \textit{relevant}: we prove that the quenched and annealed critical points differ. 
 Moreover, we discuss a number of open problems, in  particular the  smoothing phenomenon that is expected to enter the game  when disorder is relevant. 
%However, we are not able to show the existence of a smoothing phenomenon which would tell that the critical exponents also differ. The marginal case $\alpha=1$ is left aside, essentially because of extra technical difficulties.
\end{abstract}

\subjclass[2010]{60K35, 82D60, 92C05, 60K05, 60F10}
\keywords{DNA Denaturation, Disordered Polymer Pinning Model,   Critical Behavior, Disorder Relevance, Two-dimensional Renewal Processes.}

\maketitle

%\tableofcontents

\section{Introduction of the model and results}

 The analysis of the DNA denaturation phenomenon, \textit{i.e.}\ the unbinding  at high temperature of two strands of
 DNA, has lead to the proposal of a very elementary model, the Poland-Scheraga (PS) model \cite{cf:PSbook}, that turns out to be relevant not only at a conceptual and qualitative level \cite{cf:Fisher,cf:GB}, 
 but also at a quantitative level \cite{cf:Blake1,cf:Blake2}. This model can naturally embody 
 the inhomogeneous character of the DNA polymer, which  is a monomer sequence  of four different types  (A,T, G and C).
 The binding energy for A-T pairs is different from the binding energy for G-C pairs.
 The quantitative analysis is then based on finite length chains with a given sequence of pairs, but in order to analyse general properties of inhomogeneous chains   
 bio-physicists focused on the cases in which the base sequence is the realization of a sequence of random variables, that is often referred to
 as disorder in statistical mechanics. 
 The  PS  model is limited to the case in which  the two strands are of equal length and the $n^{\textrm{th}}$ base of one strand can only bind with the $n^{\textrm{th}}$ base of the other strand: it does not allow {\sl mismatches} or, more generally, {\sl asymmetric loops}, see Fig.~\ref{fig:PS}. 
A less elementary model, the generalized Poland-Scheraga model (gPS)  \cite{cf:GK} allows asymmetric loops, and different length strands are allowed too, see Fig.~\ref{fig:gPS}. 
   
\begin{figure}[hbtp]
	\centering
	\begin{subfigure}[t]{0.4\textwidth}
		\centering
		\includegraphics[width=0.9\textwidth]{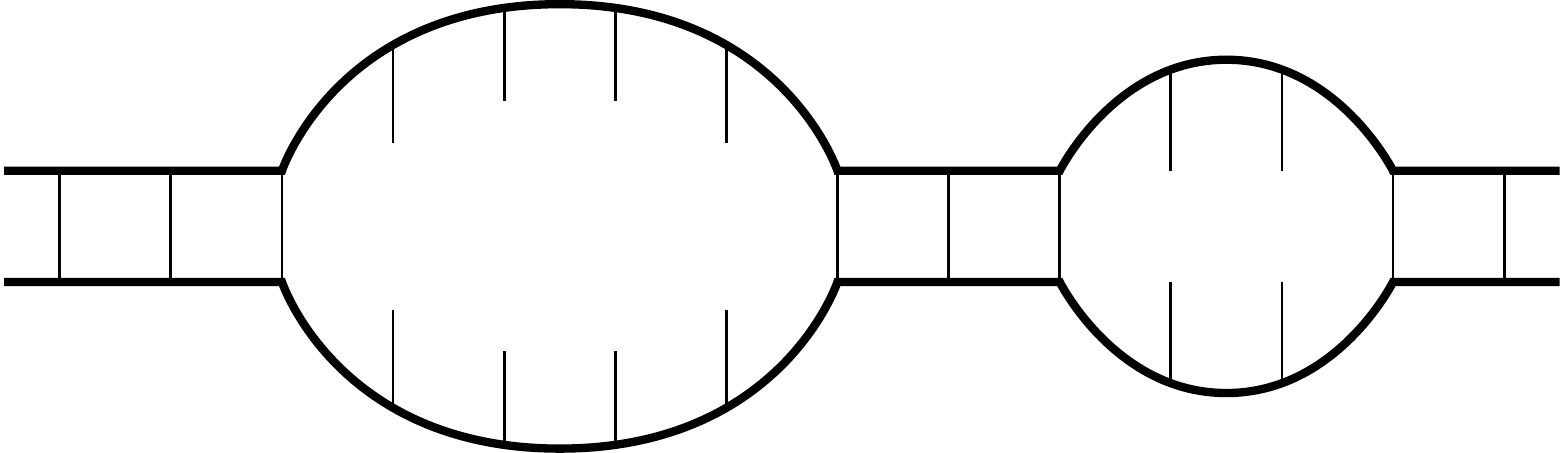}
		\caption{\footnotesize Standard PS model.}\label{fig:PS}		
	\end{subfigure}
	\qquad \qquad
	\begin{subfigure}[t]{0.4\textwidth}
		\centering
		\includegraphics[width=0.9\textwidth]{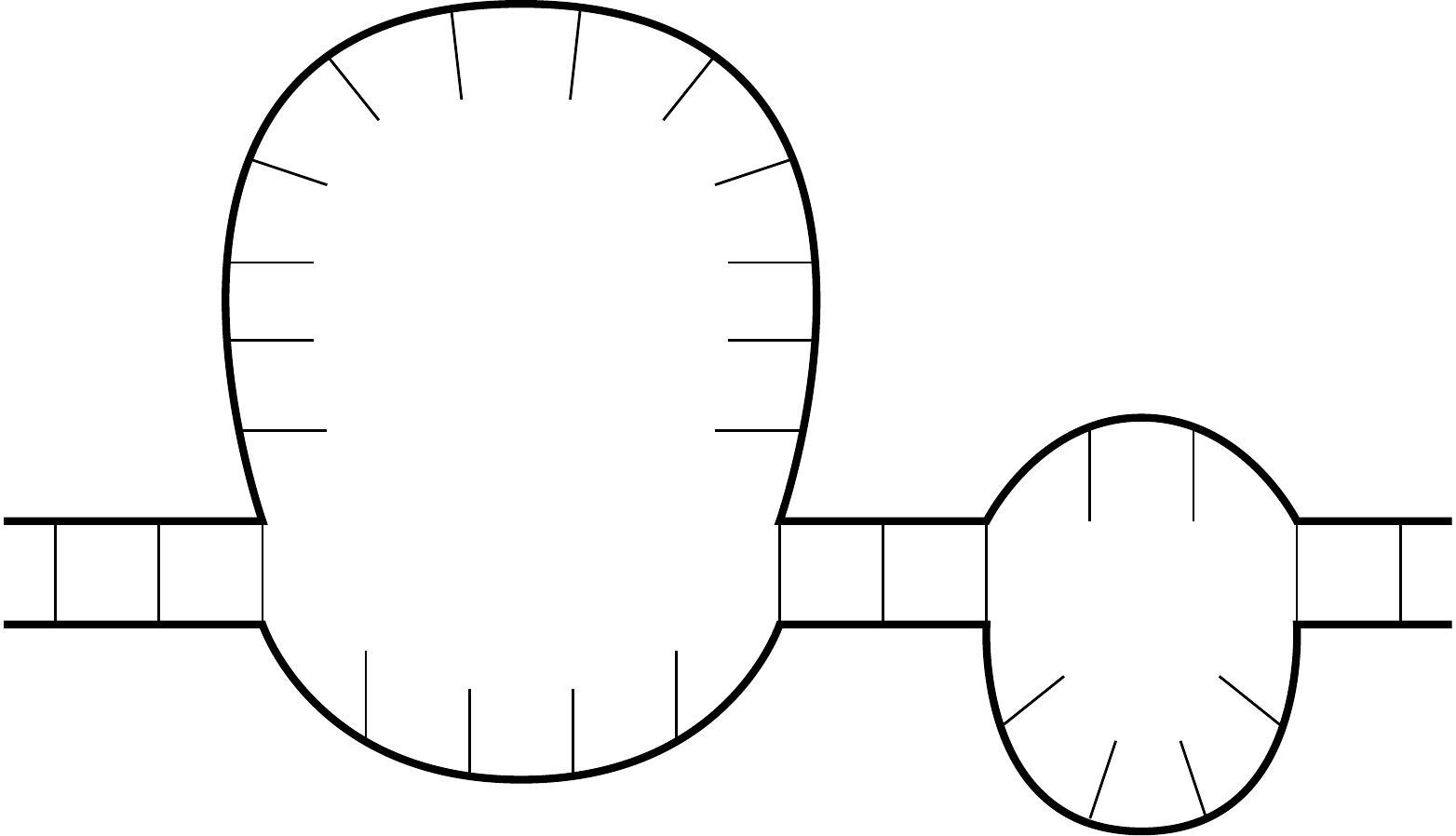}
		\caption{\footnotesize Generalized PS model.}\label{fig:gPS}
	\end{subfigure}
	\caption{Standard v.\ Generalized Poland-Scheraga models. The figure on the left represents the standard PS model: the two strands of DNA have the same length (there are $14$ base pairs in Fig.~\ref{fig:PS}), loops are symmetric (there are $5$ loops of lengths $1$, $1$ loop of length $3$ and $1$ loop of length $5$). The figure on the right represents the generalized PS model: the two strands may have a different number of bases ($22$ for the 'top' one, and $16$ for the 'bottom' one), and loops are allowed to be asymmetric and can be encoded by two numbers $(n,m)$ where $n$ is the length the 'top' strand and $m$ of the 'bottom' strand (the loops in Fig.~\ref{fig:gPS} are from left to right $(1,1),(1,1),(13,5),(1,1),(1,1),(3,5),$ $ (1,1)$).}\label{fig:1}
\end{figure}

   A remarkable feature of the non disordered PS model  (this corresponds to the case in which  all the bases are the same: for example a strand AAA... and a second strand TTT...)
is its solvable character. Notably, one can show that the model has a {\sl denaturation} transition in the limit of infinite strand length, and one can identify the critical point (the critical temperature)
and the critical behavior, \textit{i.e.}\ the nature of the singularity of the free energy at the critical value. %At the denaturation transition the two strands completely unbind. 
Somewhat surprisingly, also the gPS model is exactly solvable, in spite of the fact that it is considerably more complex than the PS model. 
This has been pointed out first in \cite{cf:GOO,cf:GO04,cf:NG06} and a mathematical treatment can be found in \cite{cf:GK}. Let us stress that the higher complexity level of the gPS model is however reflected in a richer behavior.
Notably,  in the gPS model, other phase transitions exist, beyond the denaturation transition. Another relevant remark is that PS and gPS models contain a parameter -- the {\sl loop exponent} -- that, in a mathematical or
theoretical physics perspective, can be chosen arbitrarily and on which depends the critical behavior. In fact in this class of models the critical exponent  depends on this parameter, and arbitrary critical exponents can be  observed by tuning the loop exponent.

Stepping to the disordered model is not (at all) straightforward.  One way to attack the problem is by looking at it as a stability issue: is the transition -- and we will focus 
on the denaturation one -- still present in the model if we introduce some disorder, for example a small amount? And, if it does, what is the new critical value  and is the critical behavior the same as without disorder?  
We refer to \cite[Ch.~5]{cf:GB} for an outline on this general very important issue in statistical mechanics and on the 
renormalization group ideas that lead to the so called \emph{Harris criterion} of disorder irrelevance.  We speak of disorder relevance when 
the disorder, irrespective of its strength, makes the critical behavior of the model different from the one of the non disordered model. Disorder is instead irrelevant if the two critical behaviors coincide for a small disorder strength. 
In the relevant (resp.\ irrelevant) case one can argue that applying a coarse graining procedure makes  the disorder stronger (resp.\ weaker). 
Harris' idea is that disorder (ir)relevance, can be read out of the critical exponent in the non disordered model.

More precisely, Harris criterion says that, if $\nu$ denotes the correlation length exponent of the non-disordered system and $d $ the dimension,  $\nu >2/d$ implies disorder irrelevance, at least if the disorder is not too strong. One also expects disorder relevance if $\nu< 2/d$. The case $\nu= 2/d$ is dubbed marginal and deciding whether disorder is relevant or not is usually a delicate issue, even leaving aside mathematical rigor.
The PS and gPS  models, with their wide spectra of critical behaviors,
therefore become an ideal framework for testing the validity of the physical predictions. 
In fact, the mathematical activity on the PS model (which is one-dimensional) has been very successful. Results include:

 \begin{itemize}
 \item Very complete understanding of the PS model when disorder is irrelevant \cite{cf:A08,cf:G,cf:Lac,cf:Ton08}; 
 \item Precise estimates on the disorder induced shift of the critical point (with respect to the annealed model)  in the relevant disorder case  \cite{cf:AZ09,cf:DGLT09}, and a proof of the fact that
 disorder does change the critical exponent  \cite{cf:GT06,cf:CH13} (without determining the new one: this is an open problem also in the physical literature, even if consensus is starting to emerge about the fact that pinning model in 
 the relevant disorder regime should display a very smooth localization transition,  see \cite{cf:DR,cf:BGL} and references therein);
 \item Determination of whether or not there is a disordered induced critical point shift in the marginal case, and precise estimates of this shift: this issue was controversial in the physical literature \cite{cf:BL16,cf:1GLT10}. In absence of critical point shift, the critical exponent has also been shown to be unchanged by the noise.
Showing that disorder does change the critical behavior when there is a critical point shift at marginality is an open issue, and determining the critical behavior in presence of disorder does not appear to be easier than attacking the same issue in the relevant case \cite{cf:DR}. 
 \end{itemize}

 %This subject has been the object of a lot of attention in theoretical physics \cite{cf:CH97,cf:KM03,cf:KL12} and mathematical physics\cite{cf:A08,cf:AZ09,cf:AZ10,cf:B14,cf:BCPSZ14,cf:BL12,cf:DGLT09,cf:GLT10,cf:1GLT10,cf:GLT11,cf:GT06,cf:GT09}. Allowing non complementarity of the strands and different length, it was shown that the non disordered generalized Poland-Scheraga model (gPS) is exactly solvable  . There are many ways to introduce disorder to this model: the natural way by assigning to each strand a sequence of (correlated or independent) potentials \cite{cf:EON11,cf:GOO,cf:GO04,cf:TN08}, introducing a family of random variables with two indices or adding disorder to a single strand. 

%This paper deals with the question of disorder relevance for the gPS model. The disorder is introduced via a IID family $ {\lbrace \omega _ {n,m} \rbrace}_{(n,m) \in \bbN^2}$ of random variables where the disorder is independent at each site of the two strands. Using the tools developed for the PS model, we will show that in some cases the quenched and annealed critical points differ proving the presence of a relevant disorder regime for the localization/delocalization transition. 

Our aim is to analyze the disordered gPS model and to understand the effect of disorder on the denaturation transition for this generalized, 2-dimensional, model.

\subsection{The generalized Poland-Scheraga model}
\label{sec:model}

Let $\tau = {\lbrace \tau_n \rbrace}_{n\geq 0}= {\lbrace (\tau_n^{(1)}, \tau_n^{(2)}) \rbrace}_{n\geq 0}$ to be a bivariate renewal process, \textit{i.e.}\ $\tau_0 = (0,0)$ and $\{ \tau_{n} - \tau_{n-1}\}_{n \geq 1}$ are identically distributed $\mathbb{N}^2$-valued random vectors. We denote by $\bP$ the law of $\tau$, and we assume that it has inter-arrival distribution $\bP(\tau_1= (n,m))=  K(n+m)$, where 
\begin{equation}
\label{def:Kn}
K(n) := \frac{L(n)}{n^{2+\ga}} \,,
\end{equation}
for some $\ga \ge 0$ and some slowly varying function $L(\cdot)$. Let $\mu := \bE [\tau_1^{(1)}]= \bE [\tau_1^{(2)}]\in (1, \infty]$.  Without loss of generality, we assume that $\tau$ is persistent, \textit{i.e.}\ $\sum_{n,m} K(n+m) =1$. 
A set $\tau={\lbrace \tau_n \rbrace}_{n\geq 0}$ is then interpreted as a two-strand DNA configuration: the $\tau_n^{(1)}$'s monomer of the first strand is attached to the $\tau_n^{(2)}$'s monomer of the second strand. Put differently, the $n^{\rm th}$ loop in the double strand is encoded by $(\tau_n^{(1)}-\tau_{n-1}^{(1)},\tau_n^{(2)}-\tau_{n-1}^{(2)})$, see Figure~\ref{fig:PS} and its caption. We refer to \cite{cf:GK} for further details.

Let $ \omega := { \lbrace \omega_{n,m} \rbrace }_{n,m \in \bbN}$ be a sequence of IID centered random variables (the disorder), taking values in $\bbR$, with law denoted $\bbP$. We assume that the variables $\omega_{n,m}$ are centered, have unit variance and exponential moments of all order, and we set for $\beta \in \bbR$
\begin{equation}
\label{eq:defQ}
Q( \beta) :=  \bbE [ \exp( \beta \omega) ] < \infty \,.
\end{equation}
This choice of disorder is discussed in detail in Section~\ref{sec:misc}.

Given $\beta >0$, $h \in \bbR$ (the pinning parameter) and $N,M \in \bbN$, we define $\bP_{N,M}^{\beta,h,\omega}$ a measure whose
Radon-Nikodym derivative w.r.t. $\bP$ is given by
\begin{equation}
\frac{\mathrm{d} \bP_{N,M,\omega}^{\beta,h}}{\mathrm{d} \bP} (\tau) := \frac{1}{Z_{N,M, \omega}^{\beta,h}} \exp \Big( \sum_{n=1}^N \sum_{m=1}^M (\beta \omega_{n,m} +h) \mathbf{1}_{(n,m) \in \tau} \Big) \mathbf{1}_{(N,M)\in \tau} \,,
\end{equation}
where $Z_{N,M,\omega}^{\beta,h}$ is the constrained partition function (the normalization constant)
\begin{equation}
\label{eq:pf}
Z_{N,M,\omega}^{\beta,h} := \bE \Big[ \exp \Big( \sum_{n=1}^N \sum_{m=1}^M (\beta \omega_{n,m} +h) \mathbf{1}_{(n,m) \in \tau} \Big) \mathbf{1}_{(N,M)\in \tau} \Big] \,.
\end{equation}
This corresponds to giving a reward $\gb \go_{n,m}+h$ (or a penalty if it is negative) if the $n^{\rm th}$ monomer of the first strand and the $m^{\rm th}$ monomer of the second strand meet.
Note that the presence of $\mathbf{1}_{(N,M)\in \tau}$ in the right-hand side means that we are considering trajectories that are pinned at the endpoint of the system (at a technical level it is more practical to work with the system pinned at the endpoint, see the proof of Theorem~\ref{th:fq}).

We also define the \emph{free} partition function, where the endpoints are free
\begin{equation}
\label{eq:free}
Z^{f,\beta,h}_{N,M,\omega}= \bE \Big[ \exp \Big( \sum_{n=1}^N \sum_{m=1}^M (\beta \omega_{n,m} +h) \mathbf{1}_{(n,m) \in \tau} \Big)  \Big] \,,
\end{equation}
that can be compared to the \emph{constrained} partition function \eqref{eq:pf}, see Lemma \ref{th:zfc}.
For notational convenience, we will sometimes suppress the $\beta,h$ from the partition function.

\smallskip

One then defines the {\sl quenched}  free energy of the system. We prove the following theorem in Section \ref{sec:existence}.

\begin{theorem}
\label{th:fq}
For all $\gamma>0$, $h \in \bbR$, $\beta \ge 0$ and every choice of $\{M(N)\}_{N=1,2, \ldots}$ such that $\lim_{N\to\infty}  M(N)/N=\gamma$
we have
\begin{equation}
\label{eq:saf}
\lim_{N\to\infty} \frac 1N \log Z^{\gb,h}_{N,M(N),\omega} \, =\,  \lim_{N\to\infty} \frac 1N \bbE \log Z^{\gb,h}_{N,M(N),\omega}\, =:\, \tf_{\gamma}(\beta,h) \,,
\end{equation}
where the first limit
exists $\bbP(\dd \go)$-almost surely and in $L^1(\bbP)$. The same result holds for the free model, that is $\tf_{\gamma}(\beta,h) =  \lim_{N\to\infty}  \frac 1N \log Z^{f,\gb,h}_{N,M(N),\omega}$ 
  $\bbP(\dd \go)$-a.s.\  and in $L^1(\bbP)$.
  
The function $(\gb,h) \mapsto \tf_{\gamma}(\gb,h)$ is convex, $h\mapsto \tf_{\gamma}(\gb,h)$
 and  $\gb\mapsto \tf_{\gamma}(\gb,h)$ are nondecreasing, and $\gamma \mapsto \tf_{\gamma}(\gb,h)$ is nondecreasing and 
 continuous.

\end{theorem}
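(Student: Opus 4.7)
The plan is to establish existence of the quenched free energy via superadditivity of $\bbE[\log Z^{\gb,h}_{N,M,\omega}]$, to upgrade convergence to the $\bbP$-a.s.\ and $L^1$ senses through a concentration-of-measure estimate, and to read off the qualitative properties from convexity and the same superadditive structure. The workhorse is a two-dimensional superadditivity obtained by decomposing any trajectory with $(N+N',M+M')\in\tau$ according to whether it passes through the intermediate point $(N,M)$: restricting the sum defining the partition function to such trajectories and using independence of the disorder in the two blocks yields
\begin{equation*}
Z^{\gb,h}_{N+N',M+M',\omega}\;\geq\; Z^{\gb,h}_{N,M,\omega}\cdot Z^{\gb,h}_{N',M',\theta^{(N,M)}\omega}\,,
\end{equation*}
with $\theta^{(N,M)}\omega$ a shifted copy of $\omega$ independent of the first block. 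Setting $a_{N,M}:=\bbE[\log Z^{\gb,h}_{N,M,\omega}]$ one gets $a_{N+N',M+M'}\geq a_{N,M}+a_{N',M'}$. An annealed upper bound $a_{N,M}\leq \log \bbE[Z^{\gb,h}_{N,M,\omega}]\leq N\max(0,h+\log Q(\gb))$ makes $a_{N,\lfloor\gamma N\rfloor}/N$ bounded above, and a 2D Fekete lemma then yields $a_{N,\lfloor\gamma N\rfloor}/N\to \tf_\gamma(\gb,h)\in [-\infty,\infty)$.

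Passing from $\lfloor \gamma N\rfloor$ to an arbitrary $M(N)$ with $M(N)/N\to\gamma$ exploits the monotonicity of the free partition function in $M$ together with Lemma \ref{th:zfc}, whose polynomial correction does not affect the exponential rate: sandwiching $Z^{f,\gb,h}_{N,M(N),\omega}$ between the free versions at nearby rational slopes $\gamma\pm\gep$ and using continuity of $\gamma\mapsto \tf_\gamma$ (itself a by-product of the 2D Fekete argument) closes this step. The same sandwich simultaneously yields monotonicity and continuity of $\gamma\mapsto \tf_\gamma(\gb,h)$, and the free/constrained equivalence shows that both forms of the partition function share the same free energy.

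\textbf{Concentration.} The map $\omega\mapsto \log Z^{\gb,h}_{N,M,\omega}$ is convex and has partial derivatives
\begin{equation*}
\partial_{\omega_{n,m}}\log Z^{\gb,h}_{N,M,\omega}\,=\,\gb\, \bP^{\gb,h}_{N,M,\omega}[(n,m)\in\tau]\in [0,\gb]\,,
\end{equation*}
with moreover $\sum_{n,m}(\partial_{\omega_{n,m}}\log Z)^2\leq \gb^2\, \bE^{\gb,h}_{N,M,\omega}[|\tau\cap[1,N]\times[1,M]|]\leq \gb^2 N$. Under the exponential-moment assumption on $\omega$, standard convex concentration (an entropy/Herbst argument, or a martingale-plus-truncation argument) yields $\bbP(|\log Z^{\gb,h}_{N,M,\omega}-\bbE\log Z^{\gb,h}_{N,M,\omega}|\geq tN)\leq 2\exp(-c_\gb t^2 N)$. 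Borel--Cantelli delivers the $\bbP$-a.s.\ convergence and the tail bound provides uniform integrability, hence $L^1(\bbP)$ convergence.

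\textbf{Qualitative properties and main obstacle.} Convexity of $(\gb,h)\mapsto\tf_\gamma$ descends from the convexity of $(\gb,h)\mapsto \log Z^{\gb,h}_{N,M,\omega}$ for each $\omega$ (the log of a sum of exponentials affine in $(\gb,h)$). Monotonicity in $h$ is immediate. Monotonicity in $\gb$ on $[0,\infty)$ follows from convexity together with the vanishing derivative at $\gb=0$: $\partial_\gb\bbE[\log Z^{\gb,h}_{N,M,\omega}]|_{\gb=0}=\sum_{n,m} \bbE[\omega_{n,m}]\,\bP^{0,h}[(n,m)\in\tau]=0$, so $\gb=0$ is a minimum and the function is nondecreasing to the right. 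The most delicate step is the passage from rational to general slopes in the existence argument, which requires using Lemma \ref{th:zfc} and the monotonicity in $M$ of the free partition function in tandem, and must be carried out carefully enough to deliver the continuity of $\gamma\mapsto \tf_\gamma$ at the same time. The concentration step is also moderately delicate under only exponential-moment disorder, but is by now a well-established piece of the disordered-polymer toolkit.
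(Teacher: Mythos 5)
Your route is genuinely different from the paper's. The paper obtains almost-sure and $L^1$ convergence in one stroke by applying Kingman's super-additive \emph{ergodic} theorem to $\log Z_{jq,jp,\go}$ (after verifying the $L^1$ integrability hypothesis via a last-passage-percolation bound), so no concentration estimate is needed at all. You instead apply deterministic superadditivity (Fekete) to $\bbE\log Z_{N,M,\go}$ and then upgrade to a.s.\ and $L^1$ convergence by concentration. That is a legitimate alternative, but there are two issues.

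First, and most substantively, your step from rational slopes to general $\gamma$ and to arbitrary sequences $M(N)\sim\gamma N$ leans on a claimed ``monotonicity of the free partition function in $M$,'' which is false. For fixed $\go$, increasing $M$ adds new sites $(n,m)$ to the interaction region, and those sites carry weights $\gb\go_{n,m}+h$ of \emph{arbitrary sign}; so $Z^{f,\gb,h}_{N,M,\go}$ is not monotone in $M$. (Even the constrained version is not monotone.) The paper replaces this non-existent monotonicity by genuine two-sided comparison estimates, namely \eqref{eq:comparison-upper} and \eqref{eq:comparison-lower}, obtained by decomposing over the last renewal epoch and using regular variation of $K(\cdot)$; these give $Z_{N,M_1,\go}\leq c_K N^{c_K}e^{\gb(\go_{N,M_1}-\go_{N,M_2})}Z_{N,M_2,\go}$ and a matching lower bound, which is what actually makes the sandwich and the passage to irrational $\gamma$ work, and which is also where the explicit inequality $\tf_{\gamma_1}\leq\tf_{\gamma_2}\leq(\gamma_2/\gamma_1)\tf_{\gamma_1}$ (hence local Lipschitz continuity of $\gamma\mapsto\tf_\gamma$) comes from. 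Relatedly, the continuity of $\gamma\mapsto\tf_\gamma$ is not a ``by-product of the 2D Fekete argument''; multidimensional Fekete gives convergence along rays but says nothing about continuity across rays, which again needs the comparison bounds.

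Second, a milder point: the concentration estimate $\bbP(|\log Z-\bbE\log Z|\geq tN)\leq 2e^{-c_\gb t^2N}$ is not automatic under the stated hypothesis that $\go$ merely has exponential moments of all orders. Coordinate-wise Azuma with $|\partial_{\go_{n,m}}\log Z|\leq\gb$ and $O(N^2)$ coordinates is too weak (it gives a bound independent of $N$), so one really must exploit convexity together with the $\ell_2$-gradient bound $\sum(\partial\log Z)^2\leq\gb^2\min(N,M)$, and under sub-exponential (rather than bounded or sub-Gaussian) disorder the resulting tail is typically of the mixed form $\exp(-c\min(t^2N/\gb^2,\,tN))$ rather than purely Gaussian. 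That is still summable in $N$, so the strategy works, but it should be stated and cited carefully; it is not quite the one-line application suggested. If you instead follow the paper and invoke Kingman, you sidestep concentration entirely, at the modest cost of verifying $\sup_j\frac1j\bbE|\log\cZ_j|<\infty$.
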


The homogeneous model corresponds to the case $\beta = 0$: let us drop the $\gb$ and $\go$ dependence in the partition function that will be simply denoted $Z_{N, M}^h$. 
%We will use for the partition function in absence of disorder the following notation
%\begin{equation}
%Z_{N,M}(x) = \bE \left[ \exp \left( x \sum_{n=1}^N \sum_{m=1}^M  \mathbf{1}_{(n,m) \in \tau} \right) \mathbf{1}_{(N,M)\in \tau} \right]\,.
%\end{equation}
 The homogeneous model is {\sl exactly solvable}  and sharp estimates of $\tf_\gamma (0,h)$ near criticality are given in~\cite{cf:GK}.  
%We present some facts we will need in the proofs:

\begin{theorem}[\cite{cf:GK}]
\label{th:beta0}
For every $\gamma \ge 1$, when $\beta=0$ we have $h_c(0) := \sup\{ h\, :\, \tf_{\gamma}(0,h)=0\}= 0$. Moreover there is a slowly varying function $L_{\ga, \gamma}(\cdot)$ such that as $h \searrow 0$ one has
\begin{equation}
\tf_\gamma (0,h) \sim  L_{\ga,\gamma} ( h) \, h^{1/ \min (1, \ga)} \,.
\end{equation}
Moreover, if $\sum_n n^2 K(n) < \infty$ (\textit{i.e.}\ $\mu<\infty$), then ${L_{\ga,\gamma}(h)}^{-1} = c^{-1} := \frac{1}{2} \sum_{n } n(n-1) K(n)$. 
\end{theorem}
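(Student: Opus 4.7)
The plan is to exploit the exact solvability of the homogeneous model via the bivariate Laplace transform of the inter-arrival law,
\[
\hat K(x,y) \,:=\, \sum_{n,m \geq 1} K(n+m) e^{-xn - ym}\,, \qquad x, y \geq 0\, .
\]
Decomposing $Z^{h}_{N,M} = \sum_{k \geq 1} e^{hk} \bP(\tau_k = (N,M))$ and applying the exponential tilt $\tilde K_{x,y}(n,m) \propto K(n+m) e^{-xn-ym}$ gives the identity $Z^{h}_{N,M} = e^{xN + yM} \sum_k (e^h \hat K(x,y))^k \tilde \bP_{x,y}(\tau_k = (N,M))$. Bounding the geometric sum whenever $e^h \hat K \leq 1$ yields the upper bound
\[
\tf_\gamma(0,h) \,\leq\, \inf\bigl\{\, x + \gamma y\,:\, x, y \geq 0,\ e^h \hat K(x,y) \leq 1 \,\bigr\}\, ,
\]
with a matching lower bound obtained by singling out a dominant renewal count $k$.

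To establish $h_c(0) = 0$, observe that $Z^{0}_{N,\gamma N} \leq \bP((N,\gamma N) \in \tau) \leq 1$ gives $\tf_\gamma(0,0) \leq 0$, while $Z^{0}_{N,\gamma N} \geq \bP(\tau_1 = (N, \gamma N)) = K((1+\gamma) N)$ decays only polynomially, so $\tf_\gamma(0,0) \geq 0$; strict positivity for $h>0$ is a consequence of the asymptotic analysis below.

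Next, locate the minimizer in the variational formula. The feasible set $\{\hat K \leq e^{-h}\}$ is convex ($\log \hat K$ being convex as the logarithm of a bivariate Laplace transform), does not contain $(0,0)$ for $h>0$, and shrinks toward the origin as $h \to 0$. A Lagrangian tangency $\partial_y \hat K / \partial_x \hat K = \gamma$ forces $(x,y)$ bounded away from the origin whenever $\gamma > 1$, since $\nabla \hat K(0,0) = -\mu(1,1)$ gives a ratio $1$ there. Hence for $\gamma > 1$ and $h$ small the minimum is attained at the boundary point $(x_0(h), 0)$ where $\hat K(x_0, 0) = \bE[e^{-x_0 \tau_1^{(1)}}] = e^{-h}$, while for $\gamma = 1$ symmetry forces a diagonal minimizer $(x^*(h), x^*(h))$ with $\hat K(x^*, x^*) = \sum_s (s-1) K(s) e^{-x^* s} = e^{-h}$. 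In both cases the problem reduces to a one-dimensional pinning-type equation for which $\tf_\gamma(0,h)$ equals $x_0(h)$ or $2x^*(h)$ respectively.

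Finally, extract the asymptotics by Tauberian inversion. If $\sum_n n^2 K(n) < \infty$, then $\hat K(x,0) = 1 - \mu x + o(x)$ and $\hat K(x,x) = 1 - 2\mu x + o(x)$ (with $\mu = \tfrac12 \sum_n n(n-1) K(n)$), so $\tf_\gamma(0,h) \sim h/\mu = c h$ uniformly in $\gamma \geq 1$. When $\alpha < 1$, the tail $\bP(\tau_1^{(1)} > n) \sim C\, \tilde L(n)/n^\alpha$ (obtained by summing $K(n+m)$ over $m$) and Karamata's theorem yield $1 - \hat K(x,0) \sim C' \tilde L(1/x)\, x^\alpha$, whose inversion gives $x_0(h) \sim L_{\alpha,\gamma}(h)\, h^{1/\alpha}$ with an explicit slowly-varying $L_{\alpha,\gamma}$; the case $\gamma = 1$ is treated identically using $1 - \hat K(x,x) \sim C'' \tilde L(1/x) x^\alpha$, and the marginal case $\alpha = 1$ involves logarithmic slowly-varying corrections. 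The main technical obstacle is producing the matching lower bound when $\gamma > 1$: since the tilted renewal at $(x_0(h), 0)$ has mean direction $(1,1)$ rather than $(1,\gamma)$, one must exhibit trajectories combining $\sim N/\mu$ typical renewals with one anomalous heavy-tailed jump absorbing the excess $(\gamma-1)N$ units in the second coordinate---a cost polynomial in $N$ and hence negligible at the free-energy scale. This reconstruction, suitably adapted to the heavy-tailed regime via a one-big-jump argument, is the delicate ingredient carried out in \cite{cf:GK}.
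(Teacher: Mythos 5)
The variational route you take---tilting the bivariate renewal to obtain the characterization $\tf_\gamma(0,h)=\inf\{x+\gamma y:\, x,y\geq 0,\ e^h\hat K(x,y)\leq 1\}$---is indeed the method of~\cite{cf:GK}, which this paper only cites. The upper bound via the tilt and the $h_c(0)=0$ argument are sound. However, your localization of the minimizer for $\gamma>1$ has a real gap precisely in the heavy-tail regime $\alpha<1$, which is where the nontrivial exponent $1/\alpha$ arises. You argue that since $\nabla\hat K(0,0)=-\mu(1,1)$, the tangency ratio $\partial_y\hat K/\partial_x\hat K$ is close to $1$ near the origin and cannot equal $\gamma>1$, so the optimum must lie on $\{y=0\}$. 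But for $\alpha<1$ one has $\mu=\infty$, the gradient blows up at the origin, and the ratio depends on the direction of approach. A Karamata computation shows that along the axis, as $x\searrow 0$,
\[
\frac{\partial_y\hat K(x,0)}{\partial_x\hat K(x,0)} \;=\; \frac{\sum_{n} e^{-xn}\sum_{m\geq 1} mK(n+m)}{\sum_{n} e^{-xn}\, n\sum_{m\geq 1}K(n+m)}\;\longrightarrow\; \frac1\alpha \;>\;1\,,
\]
since $\sum_{m\geq 1}mK(n+m)\sim \tfrac{L(n)}{\alpha(1+\alpha)}\,n^{-\alpha}$ while $n\sum_{m\geq 1}K(n+m)\sim\tfrac{L(n)}{1+\alpha}\,n^{-\alpha}$. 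The KKT condition places the optimum on $\{y=0\}$ only when $\gamma$ exceeds this ratio, so for every fixed $\gamma\in(1,1/\alpha)$ the minimizer is \emph{interior} (a tangency $(x(h),y(h))$ with both coordinates positive), and the identity $\tf_\gamma(0,h)=x_0(h)$ is false in that range---indeed the free energy genuinely depends on $\gamma$ there, reflecting one of the additional phase transitions of the gPS model mentioned in the introduction.

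Your final asymptotics survive this correction: at an interior tangency $y(h)\asymp x(h)$ and $1-\hat K(x,y)\sim C(\alpha,\gamma)\,L(1/x)\,x^\alpha$, so the exponent remains $1/\alpha$ while the slowly varying prefactor becomes $\gamma$-dependent (consistent with the theorem, which pins down $L_{\alpha,\gamma}$ only when $\mu<\infty$). Note also that the one-big-jump reconstruction you sketch for the lower bound is the right picture only when the optimum lies on the axis (i.e.\ $\mu<\infty$ with $\gamma>1$, or $\alpha<1$ with $\gamma\geq1/\alpha$); at an interior tangency the tilt at $(x,y)$ with $y>0$ already produces a renewal whose mean drift direction is $(1,\gamma)$, and a local limit theorem for the tilted bivariate walk closes the bound without any anomalous jump.
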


\noindent
In fact, the disordered  system also presents this transition: we define the critical point 
\begin{equation}
h_c(\beta) := \sup \lbrace h \, : \,  \tf_\gamma (\beta,h) = 0 \rbrace = \min \lbrace h \, : \,  \tf_\gamma (\beta,h) >0 \rbrace \,.
\end{equation}
Let us note that $h_c(\gb)$ does not depend on $\gamma >0$, thanks to \eqref{eq:existence+.3} below.

On the other hand, we define the annealed free energy as 
\begin{equation}
\tf_\gamma^{a}(\beta,h) := \limtwo{N\to\infty,}{M(N) / N \to \gamma} \frac 1N \log \bbE Z_{N,M(N),\omega}^{\gb,h} = \tf_\gamma (0, h + \log Q(\beta)) \,.
\end{equation}
This link with the homogeneous model and the fact that $h_c(0)=0$ allow immediately to identify  the annealed critical point:
\begin{equation}
\label{eq:hann}
h_c^{a}(\beta) := \min \lbrace h \, : \,  \tf^{a}_\gamma (\beta,h) >0 \rbrace = - \log Q(\beta) \,.
\end{equation}
Now observe that by Jensen's inequality, we have that $\bbE \log Z_{N,M,\omega} \le \log \bbE Z_{N,M,\omega}$ and hence $\tf_\gamma^{q}(\beta,h) \le \tf_\gamma^{a}(\beta,h)$. Moreover, since $\beta \mapsto \tf_\gamma (\beta,h)$ is non-decreasing,  
we have that $\tf_\gamma (0,h) \le \tf^{q}_\gamma (\beta ,h) $. Therefore for every $\beta$ we have
\begin{equation}
\label{eq:inh}
h_c^{a}(\beta)\,  \le\,  h_c(\beta) \le h_c(0) \,.
\end{equation} 
One can show, by adapting the argument of proof of
\cite[Th.~5.2]{cf:G}, that the second inequality is strict for every $\gb \neq 0$.
The first inequality may or may not be strict and this is an important issue which is directly linked to disorder relevance and irrelevance.

\smallskip
Harris' criterion predicts that disorder is irrelevant if $\nu>2/d$. Here, Theorem~\ref{th:beta0} suggests that $\nu=1/\min(1,\alpha)$, if we admit that the correlation length of the non-disordered system can be given by the reciprocal of the free energy, as it is the case for the PS model, see \cite{cf:G08}. Since the model is $2$-dimensional (contrary to the PS model which is $1$-dimensional), it would mean that disorder is irrelevant when $\nu>1$, that is when $\ga<1$.

And in fact our first result states that the first inequality in \eqref{eq:inh} is an equality if $\ga<1$
and $\gb$ is not too large. For the same values of $\gb$ we can also show that the critical behavior is the same as for the
$\gb=0$ case (disorder irrelevance).
Our second result asserts that the inequality is strict for $\ga>1$. We interpret this critical point shift, with a certain abuse, as disorder relevance. We however refer to the discussion in Section~\ref{sec:misc} (in particular Conjecture~\ref{conj:smoothing}) regarding the change in the critical behavior.
We therefore prove that disorder is irrelevant if $\ga<1$, and relevant (in terms of critical points) if $\ga>1$,  confirming Harris' prediction.

\subsection{Relevance and irrelevance of disorder}

Let us define $\sigma :=\tau \cap \tau'$, where~$\tau$ and~$\tau'$ are two independent copies of~$\tau$: $\sigma$ is another bivariate renewal process, and Proposition~\ref{prop:transience} tells that $\sigma$ is terminating if $\ga\in(0,1)$ and persistent if $\ga>1$ -- the case $\ga=1$ is discussed in Remark~\ref{rem:alpha=1}.
% We start by showing disorder irrelevance for $\ga<1$.

\begin{theorem}
\label{thm:irrel}
Assume that $\sigma$ is terminating ( this includes $\ga<1$ and excludes $\ga >1$). Then there exists  $\beta_1>0$ (see \eqref{eq:beta1}), such that  for every $\beta \in (0,\beta_1)$ we have $h_c(\beta) = h_c^a(\beta)$, and moreover
\begin{equation}
\label{eq:s2}
\lim_{h \searrow h_c(\beta)} \frac{\log \tf_{\gamma}(\beta,h)}{\log (h-h_c(\beta))} = \frac{1}{\ga} \,.
\end{equation}
\end{theorem}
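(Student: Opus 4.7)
The plan is to use a second-moment / replica-coupling method at the annealed critical point $h_c^a(\beta)=-\log Q(\beta)$. By independence of the disorder, $\bbE Z_{N,M,\omega}^{\beta,h}=Z_{N,M}^{h+\log Q(\beta)}$ (the homogeneous constrained partition function with shifted pinning), so at $h=h_c^a(\beta)$ it equals $\bP((N,M)\in\tau)$. Introducing an independent copy $\tau'$ of $\tau$ so that $\sigma=\tau\cap\tau'$, a direct integration over the disorder yields, at $h=h_c^a(\beta)$,
\begin{equation*}
\frac{\bbE\big[(Z_{N,M,\omega}^{\beta,h_c^a(\beta)})^2\big]}{\big(\bbE Z_{N,M,\omega}^{\beta,h_c^a(\beta)}\big)^2}
\,=\,\bE^{\otimes 2}\Big[\exp\!\big(\tilde\lambda(\beta)\,\big|\sigma\cap([1,N]\times[1,M])\big|\big)\,\Big|\,(N,M)\in\tau\cap\tau'\Big] \,,
\end{equation*}
with $\tilde\lambda(\beta):=\log Q(2\beta)-2\log Q(\beta)\sim 2\beta^2$ as $\beta\to 0$. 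The whole argument rests on a uniform bound on this ratio.

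The heart of the proof is therefore to show that this conditional exponential moment stays bounded uniformly in $N,M$ provided $\tilde\lambda(\beta)$ is below an explicit threshold $\tilde\lambda_c>0$; this will serve to define $\beta_1$ (via $\tilde\lambda(\beta_1)=\tilde\lambda_c$, positive since $\tilde\lambda(\beta)\to 0$). Expanding $e^{\tilde\lambda|\sigma|_{N,M}}=\sum_{A}(e^{\tilde\lambda}-1)^{|A|}\mathbf 1_{A\subset\sigma}$ and exploiting the renewal structure, the expectation factorises into convolutions of the kernel $\phi(n,m):=\bP((n,m)\in\sigma)=u(n,m)^2$ where $u(n,m):=\bP((n,m)\in\tau)$. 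When $\sigma$ is terminating one has $\sum_{(n,m)}\phi(n,m)=\bE|\sigma|<\infty$, which is the crucial summability. Combining it with the sharp two-dimensional renewal asymptotics for $u(n,m)$ established in \cite{cf:GK,cf:BGK} and a reversibility decomposition of the two trajectories around a midpoint (to neutralise the pinning at $(N,M)$ via a Cauchy--Schwarz-type manipulation) yields the desired uniform bound.

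Once the second-moment bound is at hand, the Paley--Zygmund inequality together with concentration of $\log Z_{N,M,\omega}^{\beta,h}$ around $\bbE\log Z_{N,M,\omega}^{\beta,h}$ (available because $\omega$ has exponential moments of all orders) gives $\bbE\log Z=\log\bbE Z+o(N)$, hence $\tf_\gamma(\beta,h_c^a(\beta))=\tf_\gamma^a(\beta,h_c^a(\beta))=0$. By monotonicity and convexity of $h\mapsto\tf_\gamma(\beta,h)$, together with the analogous (easier) second-moment bound for $h$ slightly above $h_c^a(\beta)$, the equality extends to $\tf_\gamma(\beta,h)=\tf_\gamma^a(\beta,h)=\tf_\gamma(0,h-h_c^a(\beta))$ on a right-neighbourhood of $h_c^a(\beta)$. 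This forces $h_c(\beta)=h_c^a(\beta)$, and the critical exponent \eqref{eq:s2} then follows at once from the sharp homogeneous asymptotics of Theorem~\ref{th:beta0} applied to $\tf_\gamma(0,h-h_c^a(\beta))$.

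The main obstacle is the uniform second-moment estimate. Although the terminating character of $\sigma$ gives $\bE[e^{\tilde\lambda|\sigma|}]<\infty$ for small $\tilde\lambda$ in the \emph{unconditional} setting essentially for free, the conditioning $(N,M)\in\tau\cap\tau'$ biases both trajectories towards a distant point and substantially deforms the law of $\sigma$. Controlling the resulting conditional exponential moment requires the fine two-dimensional local behaviour of $u(n,m)$ from \cite{cf:GK,cf:BGK}, which plays here the role that the Doney-type asymptotic $u(n)\sim c\,n^{\alpha-1}$ plays in the one-dimensional PS analysis. A secondary, more technical, issue is to upgrade the second-moment control from $h=h_c^a(\beta)$ to an entire right-neighbourhood, which is what is needed to recover the critical exponent rather than merely the position of the phase transition; this can be done either by interpolation in $h$ using the free partition function \eqref{eq:free} and Lemma~\ref{th:zfc}, or by a direct $h$-dependent version of the second-moment computation above.
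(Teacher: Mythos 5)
Your proposal is in the same broad family as the paper's argument (second moment / replica at the annealed critical point, plus the terminating intersection renewal $\sigma$), but the specific route has two genuine problems.

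First, the ``main obstacle'' you identify — controlling the \emph{conditional} exponential moment $\bE^{\otimes 2}[\exp(\tilde\lambda|\sigma\cap([1,N]\times[1,M])|)\mid (N,M)\in\tau\cap\tau']$ — is an obstacle you created by working with the constrained partition function. The paper works with the \emph{free} partition function $Z^{f,\beta,h_c^a(\beta)}_{N,M,\omega}$ instead, for which $\bbE Z^{f}=1$ and, by a direct replica computation (Lemma~\ref{th:tau'}),
\[
\bbE\big[(Z^{f,\beta,h_c^a(\beta)}_{N,M,\omega})^2\big]
=\bE^{\otimes 2}\Big[\exp\big(\tilde\lambda(\beta)\,|\sigma\cap([1,N]\times[1,M])|\big)\Big]
\le \bE^{\otimes 2}\big[\exp\big(\tilde\lambda(\beta)\,|\sigma|\big)\big],
\]
which is finite for $\tilde\lambda(\beta)<-\log \bP^{\otimes 2}(\sigma_1<\infty)$ simply because $|\sigma|$ is geometric when $\sigma$ is terminating. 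No local renewal asymptotics, no reversibility/Cauchy--Schwarz manipulations are needed. Lemma~\ref{th:zfc} then transfers bounds from $Z^f$ to $Z^c$ at the cost of a polynomial factor, which is negligible at the free-energy level. This is the Lacoin shortcut \cite{cf:Lac}, and without it you would have to deliver a substantially more delicate (and unnecessary) estimate.

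Second, the claim that $\tf_\gamma(\beta,h)=\tf_\gamma^a(\beta,h)$ on a right neighbourhood of $h_c^a(\beta)$ is too strong and is not what is proved (nor what is true in general): even in the irrelevant regime one expects $\tf^q<\tf^a$ for $\beta>0$ and $h$ in the localized phase; only the critical \emph{exponent} coincides. The paper instead produces a matching \emph{lower bound}. Starting from uniform integrability of $Z^f$ at $h_c^a(\beta)$, Proposition~\ref{th:unint1} shows that with probability $\ge\zeta>0$ both $Z^{f,\beta,h_c^a(\beta)}_{N,M,\omega}>1/2$ and $\bP^{f,\beta,h_c^a(\beta)}_{N,M,\omega}(A_N)\le 1/2$ where $A_N=\{|\tau\cap(0,N]\times(0,M]|\le N^\eta\}$ for $\eta<\ga$ (this is where Lemma~\ref{th:limp} on the bivariate renewal enters). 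On this event $Z^{f,\beta,h_c^a(\beta)+h}_{N,M,\omega}\ge \tfrac14 e^{hN^\eta}$; taking expectation of the logarithm (not a concentration step) and optimizing over $N$ yields $\tf_\gamma(\beta,h_c^a(\beta)+h)\ge h^{(1+\gep)/\eta}$ for small $h$. Letting $\eta\uparrow\ga$, $\gep\downarrow 0$, and combining with the Jensen upper bound $\tf^q\le\tf^a\sim L_{\ga,\gamma}(h)h^{1/\ga}$ gives both $h_c(\beta)=h_c^a(\beta)$ and \eqref{eq:s2}. Your plan does not contain this lower-bound mechanism, and the free-energy equality you invoke in its place is unjustified.
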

Hence, the order of the phase transition is unchanged when $\sigma$ is terminating (which is the case if $\ga<1$), at least when $\gb$ is small enough. We prove Theorem~\ref{thm:irrel} in Section~\ref{sec:irrel}.
We mention that when  the disorder distribution is infinitely divisible (for instance Gaussian), one can get  sharper bounds regarding the critical behavior of $\tf_{\gamma}(\beta,h)$, via a replica-coupling method, as done in \cite{cf:Ton08} or \cite{cf:Wa12}. For a statement and a detailed proof, we refer to \cite{cf:Khatib}.

%When the disorder distribution is Gaussian, we are able to obtain a sharper result regarding the critical behavior of $\tf_{\gamma}(\beta,h)$, via a replica-coupling method detailed in Appendix \ref{app:replica}.
%\begin{theorem}
%\label{thm:replica}
%Assume that $\go_1\sim \mathcal{N}(0,1)$ and that $\sigma$ is terminating.
%For every $\epsilon >0$, there exists $\beta_0(\epsilon)>0$ and $\Delta_0(\epsilon) >0$ such that, for every $\beta \le \beta_0(\epsilon)$ and $0 < \Delta < \Delta_0(\epsilon)$, 
%\begin{equation}
%(1- \epsilon) \tf_\gamma(0, \Delta) \le \tf_\gamma(\beta, h_c^a(\beta)+ \Delta) \le \tf_\gamma(0, \Delta) \,.
%\end{equation} 
%\end{theorem}
%

On the other hand, when $\ga>1$, we show that the quenched and annealed critical points differ, and we give a lower bound on the critical point shift.

\medskip

\begin{theorem}
\label{thm:rel}
For $\ga>1$ we have  $h_c(\gb) > h_c^{a}(\gb)$ for every $\beta>0$.
Moreover, for every $\gep>0$, there exists $\gb_{\gep}>0$ such that for any $\gb\leq \gb_\gep$ we have
\begin{equation}
\label{eq:rel}
h_c(\beta) - h_c^{a}(\beta)\ge \gD_{\gb}^{\gep}:=
\begin{cases}
\gb^{\frac{2\ga}{\ga-1} +\gep} & \quad \text{ if } \ga\in(1,2]\, , \\
\gb^4 |\log \gb |^{-6}  & \quad \text{ if } \ga >2\, .
\end{cases}
\end{equation}
Moreover, there is a slowly varying function  $\tilde{L}(\cdot)$ such that
\begin{equation}
\label{eq:upb}
h_c(\beta) - h_c^a (\beta) \le 
\tilde{L}(1/\gb)   \gb^{\frac{2\ga}{\ga-1} \vee 4} \, .
\end{equation}
\end{theorem}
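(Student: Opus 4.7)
The theorem contains two parts: the lower bound \eqref{eq:rel} on the critical point shift (which, once established for small $\gb$, propagates to all $\gb>0$ by the standard monotonicity/interpolation argument of \cite{cf:G}), and the matching upper bound \eqref{eq:upb}. Both are controlled by the intersection renewal $\gs:=\tau\cap\tau'$, which is persistent for $\ga>1$, and both rely on the second moment identity: setting $\gl(\gb):=\log Q(2\gb)-2\log Q(\gb)\sim\gb^2$ and $\gL_{N,M}:=[1,N]\times[1,M]$, one has
\[
\bbE\bigl[(Z^{\gb,h_c^a(\gb)+u}_{N,M,\go})^2\bigr]=\bE^{\otimes 2}\!\left[\exp\!\Big(\gl(\gb)|\gs\cap\gL_{N,M}|+u\,(|\tau\cap\gL_{N,M}|+|\tau'\cap\gL_{N,M}|)\Big)\mathbf{1}_{(N,M)\in\gs}\right].
\]
The relevant length scale is $\ell=\ell_\gb$ such that $\gb^2\,\bE|\gs\cap[0,\ell]^2|\asymp 1$, and the proof first requires sharp two-sided bivariate renewal estimates on $\gs$ covering the regimes $\ga\in(1,2)$, $\ga=2$ and $\ga>2$; these are the main technical input feeding both directions.

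For the lower bound \eqref{eq:rel}, I would implement the fractional moment plus coarse graining method of \cite{cf:DGLT09,cf:GT06} in the bivariate setting. Fix $\theta\in(0,1)$ and partition the diagonal strip of slope $\gamma$ into blocks of linear size $\ell_\gb$. Expanding $Z^{\gb,h}_{N,M,\go}$ as a sum over coarse trajectories listing the blocks visited by $\tau$ between inter-block jumps and applying $(a+b)^\theta\le a^\theta+b^\theta$ reduces the fractional moment to the partition function of an effective bivariate renewal with step tail of order $\ell_\gb^{-(2+\ga)}$ and pinning weight $\sup_{(x,y)}\bbE\bigl[(Z^{\gb,h}_{\ell_\gb,\ell_\gb,\go(x+\cdot,y+\cdot)})^{\theta}\bigr]$. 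The single-block fractional moment is in turn controlled by a change of measure tilting $\{\go_{n,m}\}$ inside the block in the direction that favours atypical pairs $(\tau,\tau')$ with large $|\gs\cap[0,\ell_\gb]^2|$; H\"older then produces a gain of order $\exp(-c\gb^2\bE|\gs\cap[0,\ell_\gb]^2|)=O(1)$ at the chosen scale. Balancing this gain against the annealed first-moment cost $\bbE[Z^{\gb,h_c^a(\gb)+u}_{\ell_\gb,\ell_\gb}]=Z^{0,u}_{\ell_\gb,\ell_\gb}$ and imposing subcriticality of the effective renewal yields $\tf_\gamma(\gb,h_c^a(\gb)+u)=0$ for all $u\le\gD_\gb^\gep$.

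The matching upper bound \eqref{eq:upb} comes from a second moment / Paley--Zygmund argument at the same scale. The choice $u=C\tilde L(1/\gb)\gb^{2\ga/(\ga-1)\vee 4}$ makes the right-hand side of the second moment identity comparable with $\bbE[Z^{\gb,h}_{\ell_\gb,\ell_\gb}]^2$, so by Paley--Zygmund one has $Z^{\gb,h}_{\ell_\gb,\ell_\gb,\go}\ge\tfrac12\bbE[Z^{\gb,h}_{\ell_\gb,\ell_\gb,\go}]$ on an event of positive $\bbP$-probability. A standard concatenation/superadditivity argument based on Theorem~\ref{th:fq} and the constrained--free comparison of Lemma~\ref{th:zfc} then upgrades this to $\tf_\gamma(\gb,h)\ge c/\ell_\gb>0$, hence $h_c(\gb)\le h$; it is here that the slowly varying factor $\tilde L$ in \eqref{eq:upb} enters, as the inverse of the slack in the second moment bound.

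The main obstacle is the derivation of sharp two-sided bivariate renewal asymptotics for $\gs$ uniformly across the three regimes $\ga\in(1,2)$, $\ga=2$, $\ga>2$, in the presence of the slowly varying function $L$: any slack in these estimates is amplified through the choice of $\ell_\gb$ and degrades the exponents in both \eqref{eq:rel} and \eqref{eq:upb}. A secondary difficulty is the bivariate coarse graining itself, which must proceed along diagonal strips of slope $\gamma$ rather than along a single axis as in the $1$d pinning case; the boundary terms in this decomposition are responsible for the arbitrarily small $\gep$ loss in \eqref{eq:rel} for $\ga\in(1,2]$, a loss that should nevertheless be removable in the spirit of \cite{cf:AZ09}.
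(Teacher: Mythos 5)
Your overall skeleton matches the paper (second moment plus Paley--Zygmund for \eqref{eq:upb}; fractional moment plus change of measure for \eqref{eq:rel}; monotonicity of $\gb\mapsto h_c(\gb)-h_c^a(\gb)$ to propagate to all $\gb$), and the identification of the relevant scale $\ell_\gb$ via $\gb^2\,\bE|\gs\cap[0,\ell]^2|\asymp 1$ is correct. But the lower-bound implementation you describe is genuinely different from the paper's, and your account of the change-of-measure gain is off.

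On the structure: you propose a full coarse-graining along diagonal blocks, reducing to an effective renewal with step tail $\ell_\gb^{-(2+\ga)}$ and a pinning weight given by the one-block fractional moment. The paper instead uses the one-step, finite-volume recursion of \cite{cf:DGLT09} tailored to $\ga>1$: one decomposes $Z_{N,M,\go}$ according to the last and first renewal epochs relative to the corner block $(N-k,N]\times(M-k,M]$, applies $(\sum a_i)^\gd\le\sum a_i^\gd$, and shows (Proposition~\ref{th:rho}) that if the resulting coefficients $\rho_1+\rho_2+\rho_3\le1$ then $\cA_{N,M}=\bbE[(Z_{N,M,\go})^\gd]$ remains bounded for all $N,M$, hence $\tf_\gamma=0$. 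No effective coarse-grained renewal is ever formed. Both routes are standard descendants of \cite{cf:DGLT09}; the paper's is the lighter one, appropriate when $\ga>1$, while yours is the heavier machinery usually reserved for $\ga\le1$ or marginal cases.

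On the change of measure: the paper does \emph{not} tilt $\go$ ``in the direction that favours atypical intersections $\gs$''. It performs a plain uniform shift of $\go_{n,m}$ by $-\gl$ inside a diagonal strip $J_{i,j}=\{|n-m|\le 2\ell_i\}$ (Proposition~\ref{prop:chgmeasure}), and H\"older gives a cost $\exp(c\gl^2\#J_{i,j})$ which is $O(1)$ for $\gl=(i\ell_i)^{-1/2}$. The resulting gain under the tilted law is a multiplicative penalty $\exp(-c\gb\gl\,|\tau\cap J_{i,j}|)$ on a \emph{single} copy of $\tau$, i.e.\ a factor of order $\exp(-c\gb\sqrt{i/\ell_i})$ when $|\tau\cap J_{i,j}|\sim i$. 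This is the square root of the quantity $\exp(-c\gb^2\bE|\gs\cap[0,\ell_\gb]^2|)$ you wrote down: with $\ell_i\approx i^{1/\ga}$ one has $\gb\sqrt{i/\ell_i}\approx(\gb^2 i^{1-1/\ga})^{1/2}$, whereas $\gb^2\bE|\gs\cap[0,i]^2|\approx\gb^2 i^{1-1/\ga}$. Both hit the constant threshold at the same scale, so your choice of $\ell_\gb$ is right, but the expression for the gain --- and the claim that it equals $O(1)$ rather than being driven strictly below a small threshold by pushing slightly past the critical scale (which is exactly where the $\gep$ in \eqref{eq:rel} is lost) --- conflates the second-moment bookkeeping, where $\gs$ legitimately appears, with the change-of-measure mechanism, which acts on one copy of $\tau$ and knows nothing about $\gs$. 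As written, that step of your argument would not produce the needed smallness of the one-block fractional moment.

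The upper-bound sketch matches the paper closely (define $N_\gb$ as the largest $N$ with second moment $\le 2$, use Paley--Zygmund plus the adaptation of Proposition~\ref{th:unint1}, conclude $h_c-h_c^a\le c\log N_\gb/N_\gb$, and bound $N_\gb$ from below via the Laplace transform of $\underline\gs_1$ from Lemma~\ref{lem:tau1}). Your remark that the $\gep$ loss in \eqref{eq:rel} should be removable is also consistent with the paper's own discussion.
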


\medskip

We add that $\gb\mapsto h_c(\gb)-h_c^{a}(\gb)$ is a non decreasing function of $\beta$: this result can be proven by the exact same procedure as the one used to prove Proposition 6.1 in \cite{cf:GLT11}. It is to be interpreted that disorder relevance is \emph{non-decreasing} in $\beta$.

\subsection{On the results, perspectives and related work}
\label{sec:misc}

\subsubsection{On the  main theorems}
A two replica computation plays a central role in the proof of Theorem~\ref{thm:irrel} and in  the proof of \eqref{eq:upb} of Theorem~\ref{thm:rel}: the intersection renewal $\sigma$ therefore  emerges naturally, like in the PS model. In the PS context, we now know that
disorder is irrelevant (for small values of $\gb$) if and only if the intersection renewal is terminating \cite{cf:BL16}. 
For the gPS model our results go in the same direction, but it is not sharp in the marginal case $\ga=1$: we only show  disorder irrelevance when the intersection renewal $\sigma$ is terminating. We refer to Remark \ref{rem:alpha=1} for further discussion on the case $\ga=1$, where more technicalities arise.
%{\color{blue} The difficulty is already at the renewal process level: more technicalities arise, and
%a necessary and sufficient condition for persistence  of $\sigma$ when  $\ga=1$ is more subtle.
%%-- it depends therefore on  the slowly varying function $L(\cdot)$ -- remains to be found, and seems out of reach without further understanding of bivariate renewal processes, 
%We refer to Remark \ref{rem:alpha=1} for further discussion. 
%}

The proof of \eqref{eq:rel} is based on coarse graining techniques and fractional moment method: we have chosen to adapt  the method proposed in 
\cite{cf:DGLT09} and the difficulties in its generalization come from dealing with the richness of a multidimensional path with respect to the one dimensional structure of the PS models.  A keyword for these difficulties is {\sl off-diagonal estimates}. It can certainly be improved in the direction of getting rid of the~$\gep$ in the exponent 
for $\ga \in (1, 2]$ and of the logarithmic term in the case $\ga >1$ by using more sophisticated coarse graining techniques 
(see \cite[Ch.~6]{cf:G} and references therein).
One could probably aim also for sharp estimates, like in \cite{cf:BL16}, but the estimates are technically rather demanding already to obtain \eqref{eq:rel}. We have chosen to stick to these simplified non-optimal (but almost optimal) bounds because sharper results would have required a substantially heavier argument of proof. 
%Such a sharp treatment would  make sense if we were to treat also the (marginal) case  $\ga=1$.
%, in which
%as we have explained, we still have some more basic difficulties
%technicalities arise already at the level of the intersection of bivariate renewals.
The techniques developed in \cite{cf:BL16,cf:BL16bis} should transfer to this model: at the expense of a high level of technicality, we expect that, in analogy with the PS model,  the necessary and sufficient condition for a critical point shift is the persistence of the intersection renewal $\sigma=\tau\cap\tau'$.
%: a more technically involved treatment should enable us to treat the case $\ga=1$.
%, and a more involved treatment should treat the $\ga=1$ case.

\subsubsection{Discussion on the presence of a \emph{smoothing} phenomenon}
Of course, a fully satisfactory result on disorder relevance would include showing that the critical exponent is modified by the disorder.
We do not have such a result, but let us make one observation and formulate a conjecture.

The observation is that Theorem~\ref{thm:irrel}
may appear at first surprising in view of the smoothing inequality \cite{cf:GT06,cf:CH13} for PS models that ensures 
that the free energy exponent cannot be smaller than $2$ in presence of disorder: for the gPS model the free energy exponent can go down to $1$, since in \eqref{eq:s2} we can choose $\ga$ arbitrarily close to $1$. 
The reason of the difference is that the PS model is $1$-dimensional whereas the gPS model is $2$-dimensional: Harris criterion tells that disorder should be irrelevant if $\nu>2$ for the PS model, and $\nu>1$ for the gPS model. In the gPS model, the irrelevant disorder regime therefore holds even if $\nu $ ($=\min(1,\ga)^{-1}$) is arbitrarily close to $1$: hence one should not hope for a general smoothing inequality valid whatever $\alpha$ is.

It is however worthwhile attempting to sketch the argument in \cite{cf:GT06}, in the simplified set-up of Gaussian charges \cite[Ch.~5, Sec.~4]{cf:GB}.
This is useful both to understand were the argument fails and because we can realize that a suitable generalization of the argument naturally leads to a conjecture that we state just below.

\begin{SCfigure}[50][htbp]
\centering
\caption{\label{fig:smoothing} 
\footnotesize
Schematic view of the coarse graining procedure proposed for a smoothing inequality. The environment 
is divided in blocks of size $\ell$, called $\ell$-boxes. A $\ell$-box is good (shadowed in the figure) if
the partition function in this block grows at 
an exponential rate that is larger than the free energy of the system. The good $\ell$-boxes will be rare, but we can choose 
$n$ such that in a system of linear size $n\ell$, with positive probability, there will be at least one good 
$\ell$-box. A lower bound on the partition function follows  by the limitation to trajectories that visit
only a given  good $\ell$-box (say, the closest).
}
\includegraphics[scale=0.34]{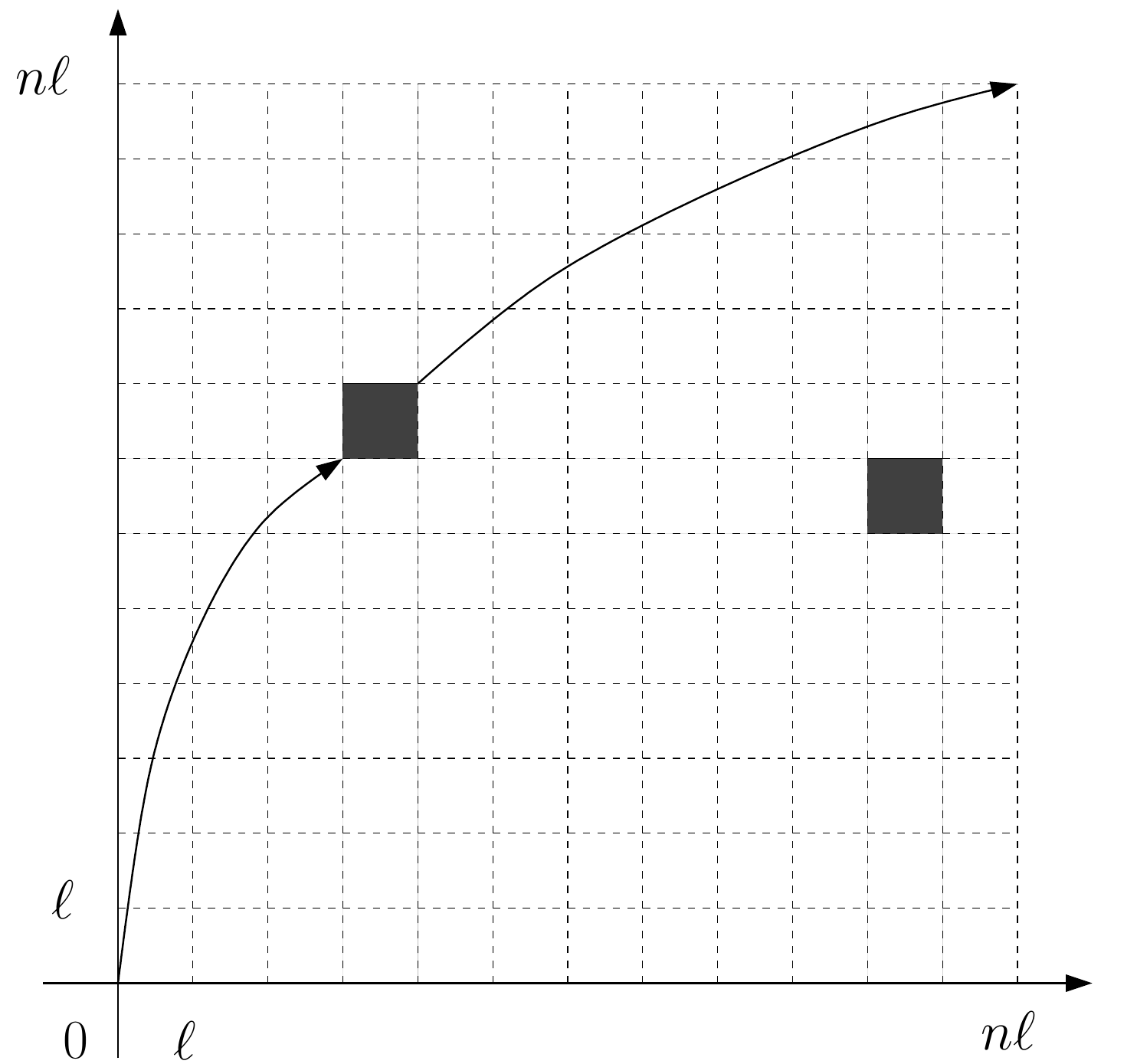}
\end{SCfigure}

The argument \cite{cf:GT06} is based on introducing a coarse graining scale $\ell \in \bbN$ and considering the environment in terms of $\ell$-boxes, see Figure~\ref{fig:smoothing}. We argue for the case $\gamma=1$ ($M=N$) and we consider the system at criticality, that is $h=h_c(\gb)$:

\begin{enumerate}
\item A good $\ell$-box is a box for which the pinned partition function (\textit{i.e.}\ pinned at the south-west and north-east corners of the $\ell$-box) is larger than 
$\exp\big( \tfrac12 \, \ell \, \tf_1(\gb, h+ \gd) \big)$, with $\gd>0$. For $\ell\to \infty$ this is a rare event. The probability of such an event 
can be estimated from below by shifting the environment of $\gd/\gb$, that is $\go_{i,j}$ is replaced with
$\go_{i,j}+ \gd/\gb$, and by performing
a relative entropy estimate \cite[Ch.~5]{cf:G}. This shows that the probability of such a rare event is at least
$\exp(-\gd^2 \ell^2 /(2 \gb^2))$:   note the $\ell^2$ term, with respect to $\ell$ in the PS case \cite{cf:GT06}. 
\item We then make a lower bound on the partition function of the system by discarding renewal trajectories that visit
$\ell$-boxes that are not good, and keeping only trajectories that enter good $\ell$-boxes through the south-west corner
and exit through the north-east corner. 
\end{enumerate}

The trajectories are therefore alternated jumps to a good box, {\sl visit} of the box, and then a new jump to another good box.
Jumps are long because good boxes are rare.
The analysis in  \cite{cf:GT06} is ultimately reduced to see what happens in one {\sl jump and visit}: by exploiting 
super-additivity one can even just choose $N=n\ell$ such that there is (say, with probability  at least $1/2$), 
at least one good box in the system (like it is done in \cite{cf:BL11}). We therefore see that we need  $n^2 \exp(-\gd^2 \ell^2 /(2 \gb^2))\approx 1$,
so that $n \approx   \exp(-\gd^2 \ell^2 / \gb^2)$: with this level of precision, jumping to enter such a box costs $K(n\ell) = (n\ell)^{-(2+\ga)}$ (let us consider the case in which $L(\cdot)$ is  a constant, but the computation goes through in the same way also in the general case). In the box there will be a contribution $\exp\big( \tfrac12\, \ell\, \tf_1(\gb, h+ \gd) \big)$. The net contribution to the logarithm of the partition function, divided by the size $n\ell$ of the system, is then
\begin{equation}
\frac 1{n \ell } \Big(  \log K\left(\exp(-\gd^2 \ell^2 / \gb^2) \ell \right) +  \frac\ell 2 (\tf_1(\gb, h+ \gd)) \Big) \, \ge \, 
\frac 1n \Big(
-\frac{c}{\gb^2} \gd ^2 \ell+ \frac 12 \tf_1(\gb, h+ \gd) \Big)\, ,
\end{equation}
 with $c$ a positive constant that we have left implicit (it depends on more accurate computations, and can be in principle just reduced to $2+ \ga$).
 
 Now let us choose $h=h_c(\gb)$. So the argument we just outlined goes in the direction  of saying that
 \begin{equation}
 0\, =\, \tf_1(\gb, h_c(\gb))\, \ge \,  \frac 1n \Big(
-\frac{c}{\gb^2} \gd ^2 \ell+ \frac 12 \tf_1(\gb, h_c(\gb)+ \gd) \Big)\, ,
 \end{equation}
so that 
\begin{equation}
\label{eq:tf1sm}
\tf_1(\gb, h_c(\gb)+ \gd) \, \le \, \frac{2c}{\gb^2} \gd ^2 \ell\, .
 \end{equation}
At this stage choosing $\ell$ arbitrarily large is of no help. The steps we have performed up to now require that $ \ell \gd$ is large (so that the good boxes we have chosen are really sparse). On the other hand we  need to have chosen the size of the boxes so that
$Z_{\ell, \ell, \go}^{\gb, h} \ge \exp( \ell (\tf_1(\gb, h_c(\gb)+ \gd))/2)$. This is a delicate issue, 
but it definitely appears that for this to hold, $\ell \tf_1(\gb, h_c(\gb)+ \gd)$ needs to be sufficiently large (say, larger than  a 
suitable constant): see for example the discussion on the notion of correlation length given in \cite[Ch.~2]{cf:G} and references therein, notably \cite{cf:GTalea}, where the correlation length is identified by the reciprocal of the free energy.
But if $\ell$ is (a constant times) $1/\tf_1(\gb, h_c(\gb)+ \gd)$ then from
\eqref{eq:tf1sm} we obtain
\begin{equation}
\label{eq:trivb}
\tf_1(\gb, h_c(\gb)+ \gd)\, \le \, C \gd\, ,
\end{equation}
for some $C>0$.  But such a bound is trivial: it holds with $C=1$ just because the contact density cannot exceed one! On the other hand, as we have already pointed out, we could not have hoped for a better bound valid for any $\alpha>0$. 

\smallskip
In spite of the fact that it leads to a trivial result, we insist 
that the argument we have just outlined can be made rigorous: the delicate step is the last one, where one has to use arguments 
developed in \cite{cf:GTalea}. 
It can therefore be taken as a starting point to push things further. Indeed, it appears useless to modify the environment in the whole $\ell$-box, at least if $\ga>1$.
In fact if $\ga>1$ one can show that for $q>1/ \max(\ga,2)$
\begin{equation}
\label{eq:evin}
\lim_{N \to \infty}\bP \left( \tau \cap [0, N]^2 \subset \{(i,j) \in  \bbZ^2:\, \vert i-j \vert \le  N^q \}\right) \, =\, 1\, .
\end{equation}
We can then consider modifying only the environment that is close to the diagonal, that is in a subset of the $\ell$-box with $|i-j|\leq \ell^q$. This would  improve the lower bound on the probability of a good $\ell$-box to $\exp( - c'\, \gd^2 \ell^{q+1})$, and \eqref{eq:tf1sm} would become
\[ \tf_1(\gb,h_c(\gb)+\gd) \leq \frac{c''}{\gb^2} \gd^2 \ell^{q}.\]
Taking $\ell$ a constant times $1/\tf_1(\gb,h_c(\gb)+\gd)$ as in the argument leading to \eqref{eq:trivb}, and then taking $q$ arbitrarily close to $1/\min(\alpha,2)$ supports the following:
\begin{conjecture}
For every $\ga>0$ and every $\gb >0$
\label{conj:smoothing}
\begin{equation}
\limsup_{\gd \searrow 0} \
\frac{\log \tf_1 (\gb, h_c(\gb)+\gd)
}{
\log  \gd  
} \, \ge 
\begin{cases}
\frac{2\ga}{\ga+1} & \text{ for } \ga \in (1,2)\, ,
\\
\frac 43 & \text{ for } \ga \ge 2\, .
\end{cases}
\end{equation}
\end{conjecture}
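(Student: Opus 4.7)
The plan is to make rigorous the heuristic argument sketched in the paragraph preceding the conjecture, building on the Giacomin--Toninelli smoothing scheme of \cite{cf:GT06} but localizing the coarse graining to a thin neighborhood of the diagonal, as the confinement estimate \eqref{eq:evin} permits. The first ingredient is a quantitative version of \eqref{eq:evin}: the difference $\tau_n^{(1)}-\tau_n^{(2)}$ is a centered random walk whose step distribution is in the domain of attraction of a symmetric $\alpha$-stable law when $\alpha\in(1,2)$ and has finite variance when $\alpha\geq 2$, so a maximal inequality gives, for any fixed $q>1/\max(\alpha,2)$ and some $\eta=\eta(q,\alpha)>0$,
\[
\bP\big(\tau\cap[0,N]^2\not\subset S_N^q\big)\,\leq\,N^{-\eta},\qquad S_N^q:=\big\{(i,j)\in\bbN^2:|i-j|\leq N^q\big\}.
\]

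Next, fix a scale $\ell\in\bbN$ and declare an $\ell$-box \emph{good} when its pinned partition function is at least $\exp\!\big(\tfrac12\ell\,\tf_1(\gb,h_c(\gb)+\gd)\big)$. To lower bound the probability of this rare event I would tilt only the disorder variables $\go_{i,j}$ with $(i,j)$ in the diagonal sub-strip $S_\ell^q\cap[0,\ell]^2$, by an amount $\gd/\gb$. By the confinement above, trajectories that produce the full free-energy rate inside the $\ell$-box stay in $S_\ell^q$ with probability tending to one, so the tilted partition function still grows at rate $\tf_1(\gb,h_c(\gb)+\gd)$; a Radon--Nikodym / relative-entropy computation in the spirit of \cite[Ch.~5]{cf:G} then gives $\bbP(\text{good }\ell\text{-box})\geq \exp(-c\gd^2\ell^{1+q})$ since $|S_\ell^q\cap[0,\ell]^2|\asymp \ell^{1+q}$.

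A Chebyshev / second-moment argument produces at least one good $\ell$-box inside $[0,n\ell]^2$ with probability bounded away from zero as soon as $\log n\asymp \gd^2\ell^{1+q}$. Lower-bounding $Z^{\gb,h_c(\gb)+\gd}_{n\ell,n\ell,\go}$ by restricting the bivariate renewal to jump directly from the origin to the south-west corner of the closest good box (at cost $K(n\ell)\asymp(n\ell)^{-2-\alpha}$) and then collecting the good-box contribution, dividing by $n\ell$ and using $\tf_1(\gb,h_c(\gb))=0$, one arrives at
\[
0\,\geq\,-\tfrac{C}{\gb^2}\gd^2\ell^{q}+\tfrac12\tf_1(\gb,h_c(\gb)+\gd).
\]
Invoking a correlation-length bound in the spirit of \cite{cf:GTalea} to legitimize the choice $\ell\sim 1/\tf_1(\gb,h_c(\gb)+\gd)$ and letting $q\downarrow 1/\max(\alpha,2)$ yields $\tf_1(\gb,h_c(\gb)+\gd)\leq C\gd^{2/(1+q)}$, which is exactly the conjectured exponent $2\alpha/(\alpha+1)$ for $\alpha\in(1,2)$ and $4/3$ for $\alpha\geq 2$.

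The hard part is the last step: one needs the restricted partition function, with trajectories constrained to the diagonal strip $S_\ell^q$ inside an $\ell$-box, to already achieve its full exponential rate $\tf_1(\gb,h_c(\gb)+\gd)$ when $\ell$ is only of order $1/\tf_1$, and not at some much larger scale. Proving a two-dimensional, diagonally-localized analogue of the Giacomin--Toninelli correlation-length identity of \cite{cf:GTalea}, and reconciling it with the disorder tilt used to make boxes good, is precisely the obstruction that prevents upgrading Conjecture~\ref{conj:smoothing} to a theorem, and would be the focal point of any attempt to complete the argument.
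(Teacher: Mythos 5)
This statement is labeled a Conjecture and the paper does not prove it: the discussion that precedes it is explicitly a heuristic, and the paper lists proving a smoothing inequality (see Conjecture~\ref{conj:smoothing}) as the first of its open problems. Your write-up faithfully reproduces that heuristic, namely coarse graining into $\ell$-boxes, tilting only the diagonal sub-strip of width $\ell^q$, a relative-entropy lower bound $\exp(-c\gd^2\ell^{1+q})$ on the probability of a good box, a one-jump lower bound on the partition function, and the substitution $\ell\sim 1/\tf_1(\gb,h_c(\gb)+\gd)$. You also isolate the same unproved step as the authors themselves do: one needs that at scale $\ell\sim 1/\tf_1$ the partition function restricted to trajectories staying in the diagonal strip already captures a fixed fraction of the full free-energy rate, so that the environment shift is actually felt by the trajectories contributing to $\tf_1$. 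There is therefore no paper proof to compare against, and your account of where the argument ceases to be rigorous agrees with the authors' own.

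One arithmetic slip is worth flagging. You state the confinement for $q>1/\max(\ga,2)$ (as does \eqref{eq:evin} in the paper, which appears to be a typo) and then claim that letting $q\downarrow 1/\max(\ga,2)$ produces the conjectured exponents. It does not: for $\ga\in(1,2)$ one has $1/\max(\ga,2)=1/2$, giving $2/(1+q)=4/3$ rather than $2\ga/(\ga+1)$, and for $\ga>2$ one has $1/\max(\ga,2)=1/\ga<1/2$, which lies below the diffusive scale of the transverse walk $\tau_n^{(1)}-\tau_n^{(2)}$ and hence cannot confine it. The transverse fluctuations are of order $n^{1/\min(\ga,2)}$, so the correct threshold is $q>1/\min(\ga,2)$; this is what the paper itself writes in the sentence immediately before the conjecture, and it is $q\downarrow 1/\min(\ga,2)$ that gives $2/(1+q)=2\ga/(\ga+1)$ for $\ga\in(1,2)$ and $4/3$ for $\ga\geq 2$.
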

We stress that a natural concern arises from performing the change of measure  only in a subset of the environment, close to the diagonal.  One indeed needs to be sure that the trajectories contributing to (a fraction of) $ \frac{1}{\ell} \log Z_{\ell,\ell,\go} \approx \tf_1(\gb,h_c(\gb)+\gd)$ can be constrained to stay in the region $\{(i,j) \in  \bbZ^2:\, \vert i-j \vert \le  \ell^q \}$: if it is the case, one can ``force'' trajectories to visit sites where the environment  has indeed been shifted.

\subsubsection{An important  modeling issue: the choice of the disorder}
There is no doubt that the first disorder that comes to mind when thinking of DNA modeling is not the one we have used. One would rather choose 
$\go_{i,j}=f(\go_i, \go_j)$ for a suitable choice of a function $f$ and a sequence $\{\go_j\}_{j=1,2, \ldots}$ of random variables (let us say IID for simplicity, but if we want to stick to DNA problems very closely it appears that some sort of strongly correlated sequence may be more appropriate \cite{cf:Peng}). For example, we could choose $\go_j$ taking only two values $e_{AT}$ and $e_{GC}$ and then make a choice for $f$ that reflects the fact that AT bounds are weaker than GC bounds, and that all other possible bounds are even weaker. 
Even restricting to $\{\go_j\}_{j=1,2, \ldots}$ that is IID, this model is highly non trivial (gPS model with this type of disorder  has been considered at a numerical level in \cite{cf:GOO,cf:GO04},  see also
\cite{cf:EON11,cf:TN08} for related work). But one could also choose to consider the binding of two sequences that are not complementary (the case considered in \cite{cf:NG06} goes in this direction, even if only heuristics and numerics are presented): choose for example two independent sequences $\{\go^{(1)}_j\}_{j=1,2, \ldots}$ and
$\{\go^{(2)}_j\}_{j=1,2, \ldots}$ and use $\go_{i,j}=f(\go^{(1)}_i, \go^{(2)}_j)$. This is somewhat closer to what we are using (though it can be considered as a one-dimensional disorder), but it is still 
very difficult to deal with. The problem is in any case due to correlations in the disorder field $\go_{i,j}$, which can be dealt with  in some cases, see e.g.~\cite{cf:BL12,cf:BP15} or \cite{cf:AB18, cf:CCP17}. Our choice is in a sense a toy choice, but we stress that it is conceptually similar to the simplification  made for example in \cite{cf:BundHwa} in the RNA context. 
Moreover it recovers importance once we leave somewhat the DNA context and focus rather on moving toward understanding  
mathematically Harris' theory of disorder (ir)relevance---in particular for 2-dimensional systems, compared to the PS model, which is 1-dimensional.

We also point out  that this disordered version of the gPS model gives a bridge between
pinning model and directed polymers in random environment \cite{cf:Co07,cf:La10}, in particular, to the long range directed polymer \cite{cf:Co07,cf:We15}. Moreover a different class of two-strand polymer problems (the random walk pinning model) is treated in
\cite{cf:BT10,cf:BS1,cf:BS2}.

\subsubsection{Open questions and perspectives}
Several natural issues remain open:
 let us list some of them.

\begin{enumerate}
\item Prove a smoothing inequality,  thus showing disorder relevance in the original sense of Harris, for $\ga>1$  (see Conjecture~\ref{conj:smoothing}).
\item What is the effect of disorder on the other phase transitions? Here we have addressed only the denaturation transition, 
but in \cite{cf:GK} other transitions are shown to exist. Do they withstand the introduction of disorder? If so, does the corresponding critical behavior differ from the homogeneous case? This is the question (quickly) addressed  \cite{cf:NG06} where a rather bold conjecture is set forth.
\item We have dealt only with free energy estimates, but, like for the standard PS model, obtaining precise estimates on 
the gPS process (\textit{i.e.}\ establish properties of trajectories) is very challenging, see  \cite[Ch.8]{cf:G} and references therein.  The problem comes of course from the inhomogeneous nature of the disorder and the fact that on rare regions atypical disorder behaviors appear (this is ultimately also the problem we face at the free energy level, but it becomes particularly explicit when one analyses the trajectories).  A precise analysis of the trajectories of the non disordered gPS model can be found in \cite{cf:BGK}: this analysis is substantially more demanding  than the corresponding one for the PS model.
\item Dealing with the marginal case $\ga=1$ is open, mostly because of the additional technical difficulties (more complicated coarse-graining procedure, more technical estimates for bivariate renewals, etc.). This appears to be a problem at reach, but a very substantial amount of technical work is certainly needed.
\end{enumerate}

\subsubsection{Organization of the rest of the work}
The issues of existence and self-averaging of the free energy, \textit{i.e.}\ the proof of Theorem~\ref{th:fq}, are treated in Section~\ref{sec:existence}. In Section~\ref{sec:irrel} we prove Theorem~\ref{thm:irrel}, as well as the upper bound \eqref{eq:upb} of Theorem~\ref{thm:rel}.
The rest of the Theorem~\ref{thm:rel} is proven in Section~\ref{sec:fm}.
We collect in Appendix~\ref{app:bivrenewal} a number of statements and proofs about bivariate renewals. 
%Finally, in Appendix~\ref{app:replica}, Theorem~\ref{thm:replica} is proven.

\subsection{Some further notations}
\label{sec:defan}

We stress that $\tau$ is symmetric and in the domain of attraction of a $\min(\ga,2)$-stable distribution: we denote $(b_n)_{n\geq 1}$ be the recentering sequence and ${(a_n)}_{n \ge 0}$  the renormalizing sequence for $\tau_n$, that is such that $\tfrac{1}{a_n} (\tau_n - (b_n,b_n))$ converges to a $\min(\ga,2)$ stable distribution, whose density is denoted $g_\ga(\cdot,\cdot)$.
For $b_n$, we have $b_n=\mu n$ if $\ga>1$, $b_n= n\bE[\min(X_1, n)]$ if $\ga=1$, and $b_n=0$ if $\ga \in(0,1)$.
The asymptotic behavior of $a_n$ is characterized by
\begin{equation}
\label{def:an}
\begin{split}
L(a_n) (a_n)^{-\ga} \sim \,  1/n\, &\qquad \text{ if } \ga<2\\
\sigma(a_n) (a_n)^{-2} \sim \, 1/n \, &\qquad \text{ if } \ga\geq 2
\end{split}
\end{equation}
where $\sigma(n):=\bE[\min(X_1, n)^2]$. If $\ga = 2$ and $\bE[X^2]=+\infty$, then $\sigma(n)$ grows to infinity as a slowly varying function (and verifies $\sigma(n)/L(n) \to +\infty$), whereas if $\bE[X^2]<+\infty$ (in particular when $\ga>2$) $a_n$ is proportional to $\sqrt{n}$.

In any case, there exists some slowly varying function $\psi(\cdot)$ such that
\begin{equation}
a_n = \psi(n) n^{1/\min(\ga, 2)} \, .
\label{an}
\end{equation}
We provide some useful results on bivariate renewals in Appendix \ref{app:bivrenewal}, in particular on the renewal mass function $\bP((n,m)\in\tau)$.

\section{Free Energy: existence and properties}
\label{sec:existence}

In this section we often assume $\gamma\in \bbQ$: in this case we write it as $\gamma=p/q$ with $p$ and $q$ relatively prime positive integer numbers.

\begin{proposition}
\label{th:existence+}
For every $\gamma>0$ and every  $\{M(N)\}_{N=1,2, \ldots}$ such that 
$\lim_{N \to \infty} M(N)/N=\gamma$ we have that 
\begin{equation}
\label{eq:existence+.1}
\lim_{N \to \infty} \frac 1N \log Z_{N,M(N), \go}^{\gb,h} \,=\, \lim_{N \to \infty} 
\frac 1N \bbE \log Z_{N,M(N), \go}^{\gb,h}\, =:\, \tf_\gamma(\gb, h) \, ,
\end{equation}
where the first limit is meant
$\bbP(\dd \go)$-a.s.\ and in $L^1 (\bbP)$.
$\tf_\gamma (\cdot,\cdot)$ is convex and $\tf_\gamma (\gb,\cdot)$ is non-decreasing, and also $\tf_\gamma (\cdot,h)$
is non-decreasing on the positive semi-axis, non-increasing in the negative one.
Moreover if $\gamma=\frac pq\in \bbQ$
\begin{equation}
\label{eq:existence+.2}
\tf_\gamma (\gb, h) \, =\, \sup_{N:\, \frac N q \in \bbN} \frac 1N \bbE \log Z_{N, \gamma N, \go}^{\gb,h}\, .
\end{equation}
Finally we have the bound: for every $\gamma_2\ge \gamma_1 >0$
\begin{equation}
\label{eq:existence+.3}
\tf_{\gamma_1}(\gb, h) \, \le \, \tf_{\gamma_2}(\gb, h)\, \le \, \frac{\gamma_2}{\gamma_1} \tf_{\gamma_1}(\gb, h) \, ,
\end{equation}
which implies that $\gamma \mapsto \tf_{\gamma}(\gb, h)$ is locally Lipschitz (hence continuous).
\end{proposition}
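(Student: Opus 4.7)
The plan is to establish existence, self-averaging and the stated properties by a standard super-additive ergodic argument along the rational diagonal $M = \gamma N$, and then to bootstrap to arbitrary sequences $\{M(N)\}$ with $M(N)/N \to \gamma$ and to irrational $\gamma$ using the comparison bound \eqref{eq:existence+.3}, which will itself be proved en route. Fix $\gamma = p/q \in \bbQ$ and $N \in q\bbN$. Restricting to trajectories that visit the intermediate point $(N_1, \gamma N_1)$ and using the renewal property together with the IID character of $\go$ gives the super-multiplicative inequality
\[ Z_{N_1+N_2,\, \gamma(N_1+N_2),\, \go}^{\gb,h} \,\ge\, Z_{N_1,\, \gamma N_1,\, \go}^{\gb,h} \cdot Z_{N_2,\, \gamma N_2,\, \theta^{(N_1, \gamma N_1)}\go}^{\gb,h}, \]
where $\theta^{(a,b)}$ shifts the disorder field. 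Integrability of $\log Z$ is ensured by the one-jump lower bound $Z_{N,M,\go}^{\gb,h} \ge K(N+M)\, e^{\gb\go_{N,M} + h}$ and a crude deterministic upper bound, both having finite $\bbE[\,\cdot\,]$ under the exponential-moments assumption on $\go$. Kingman's subadditive ergodic theorem applied to $-\log Z$ with the (ergodic) $(q,p)$-shift then yields the a.s.\ and $L^1(\bbP)$ convergence in \eqref{eq:existence+.1} along $N \in q\bbN$, and Fekete's lemma applied to the super-additive expectation sequence gives the sup-representation \eqref{eq:existence+.2}.

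The elementary properties follow directly. Convexity of $(\gb,h)\mapsto \log Z$ as a log-sum-exp and linearity of $\bbE$ give convexity of $\tf_\gamma$. Monotonicity in $h$ comes from $\partial_h \log Z \ge 0$. For monotonicity in $\gb$, convexity of $\gb\mapsto \bbE\log Z$ combined with
\[ \partial_\gb \bbE\log Z\big|_{\gb=0} \,=\, \sum_{n,m} \bbE[\go_{n,m}]\, \bP((n,m)\in\tau)\, \mathbf{1}_{n\le N, m\le M}\,=\,0, \]
(using that $\go$ is centered and independent of $\tau$) implies non-decrease for $\gb \ge 0$ and non-increase for $\gb \le 0$. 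For the first inequality in \eqref{eq:existence+.3}, force a single jump from $(N,\gamma_1 N)$ to $(N,\gamma_2 N)$ to obtain
\[ Z^{\gb,h}_{N,\gamma_2 N,\go} \,\ge\, Z^{\gb,h}_{N,\gamma_1 N,\go}\cdot K((\gamma_2-\gamma_1)N)\, e^{\gb\go_{N,\gamma_2 N} + h}, \]
yielding $\tf_{\gamma_1}\le \tf_{\gamma_2}$ since $\log K(cN)/N\to 0$. The second inequality exploits the symmetry $K(n+m)=K(m+n)$ and the IID law of $\go$: exchanging the two coordinates shows $\bbE\log Z_{N,M,\go}=\bbE\log Z_{M,N,\go}$, so $\tf_\gamma = \gamma\, \tf_{1/\gamma}$, and the first inequality applied to $1/\gamma_2\le 1/\gamma_1$ gives the stated upper bound. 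Local Lipschitz continuity of $\gamma\mapsto \tf_\gamma$ on $(0,\infty)$ is then immediate.

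The final step removes the constraint $M(N)=\gamma N$ and handles irrational $\gamma$. For rational $\gamma=p/q$ and a general sequence $M(N)/N\to \gamma$, set $(\tilde N,\tilde M):=(q\lfloor N/q\rfloor,\, p\lfloor N/q\rfloor)$, so that both $|N-\tilde N|$ and $|M(N)-\tilde M|$ are $o(N)$. A lower bound on $Z_{N,M(N),\go}^{\gb,h}$ is obtained by restricting to trajectories that reach $(\tilde N,\tilde M)$ and then finish with a single jump to $(N,M(N))$, producing a correction $K(|N-\tilde N|+|M(N)-\tilde M|)\, e^{\gb\go+h}$ which is subexponential in $N$. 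The matching upper bound is obtained by decomposing trajectories according to their last renewal point in a $o(N)$-neighborhood of $(\tilde N,\tilde M)$, factorizing each contribution by the renewal property and summing polynomially many terms. For irrational $\gamma$, sandwich by rational $\gamma_-<\gamma<\gamma_+$ using a one-jump bridging and the monotonicity from \eqref{eq:existence+.3}, then let $\gamma_\pm\to \gamma$ and invoke the Lipschitz continuity already established.

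The technical crux is the upper-bound half of the comparison $Z_{N,M(N)}\approx Z_{\tilde N,\tilde M}$ in the last step: unlike the one-dimensional pinning model, fixing the second endpoint coordinate breaks monotonicity of the partition function in the endpoint, so one cannot simply embed one partition function into the other. Controlling the contributions of trajectories whose final renewal point lies in a $o(N)$-neighborhood of $(\tilde N,\tilde M)$ while losing only a subexponential factor is where the real work lies; everything else reduces to standard manipulations of super-additive sequences and to the coordinate-swap symmetry.
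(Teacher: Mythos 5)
Your proposal follows the same architecture as the paper's proof — super-additive ergodic argument via Kingman along $N\in q\bbN$ for rational $\gamma$, one-jump bridging, then a comparison estimate to interpolate in both $N$ and $\gamma$ — but there are two genuine gaps.

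First and most importantly, you flag the upper-bound comparison as the technical crux but do not actually resolve it. Your proposed fix, ``decomposing trajectories according to their last renewal point in a $o(N)$-neighborhood of $(\tilde N,\tilde M)$,'' is not a valid decomposition of the partition function: a trajectory contributing to $Z_{N,M(N),\go}$ need not have any renewal epoch near $(\tilde N,\tilde M)$, so restricting to such trajectories only gives a lower bound, not the upper bound you need. The paper's resolution is different and cleaner: for the \emph{same} first coordinate $N$ and $M_1 < M_2 \le cN$, decompose $Z_{N,M_1,\go}$ over the last renewal $(n,m)$ with $n<N$, $m<M_1$, and replace $K(N-n+M_1-m)$ by $K(N-n+M_2-m)$ at the cost of a uniform polynomial factor $c_K N^{c_K}$ (regular variation of $K$). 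Then \emph{relax} the constraint $m<M_1$ to $m<M_2$ and reassemble to get $Z_{N,M_2,\go}$ up to the change of endpoint weight. This keeps the first coordinate fixed throughout, which is precisely what sidesteps the ``different quadrants'' obstruction you run into by comparing $(N,M(N))$ directly to $(\tilde N,\tilde M)$ with $\tilde N < N$ and $\tilde M$ possibly exceeding $M(N)$.

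Second, to apply Kingman's theorem and conclude that the limit is \emph{finite}, you need not just $\bbE|\log \cZ_j| < \infty$ for each $j$ but the uniform bound $\sup_j \tfrac1j \bbE\log \cZ_j < \infty$. Your ``crude deterministic upper bound'' gives only the former. The paper obtains the latter by observing $|\log \cZ_j| \le h n + \gb \sup_{\Gamma}\sum_{(n,m)\in\Gamma}|\go_{n,m}|$ over nearest-neighbour up-right paths, a last-passage-percolation quantity whose linear growth follows from $\bbE[\go_{1,1}^2]<\infty$; this step is not a formality. On a smaller note, your ``single jump from $(N,\gamma_1 N)$ to $(N,\gamma_2 N)$'' is not permitted since inter-arrivals live in $\bbN^2$ (both coordinates $\ge 1$); the paper's \eqref{eq:comparison-lower} uses a jump from $(N',\lfloor\gamma_2 N'\rfloor)$ with $N'$ strictly smaller than $N$. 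These are fixable, but as written the argument does not close.

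One genuinely nice alternative in your write-up: deriving the second inequality in \eqref{eq:existence+.3} from the coordinate-swap symmetry $\bbE\log Z_{N,M,\go}=\bbE\log Z_{M,N,\go}$, giving $\tf_\gamma=\gamma\,\tf_{1/\gamma}$, and then applying the first inequality to $1/\gamma_2\le 1/\gamma_1$. The paper gets the same inequality directly from the lower-bound comparison \eqref{eq:comparison-lower}; your route is shorter once the first inequality is in hand, and is worth noting.
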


\noindent
\emph{Proof.}
The proof   is divided into several steps:

\begin{enumerate}
\item We first show that for $\gamma \in \bbQ$, along a subsequence with $ N/q\in \bbN$,  $\log Z_{N, \gamma N, \go}$ 
 is super-additive in an ergodic sense, which implies the existence of the free energy limit \eqref{eq:existence+.1} along this subsequence.
 %and also \eqref{eq:existence+.1}. 
\item The restriction $\gamma N\in \bbN$ is then removed by a direct estimate, for what concerns the existence of the free energy limit, still with $\gamma \in \bbQ$.
\item We then prove a comparison estimate between $Z_{N, \gamma_1 N, \go}$ and $Z_{N, \gamma_2 N, \go}$
%recall the convention $Z_{N, \gamma N, \go} =Z_{N, \lfloor\gamma N\rfloor, \go}$ 
and use it to establish the existence of the free energy limit for $Z_{N, \gamma N, \go} $, every $\gamma>0$.
\item The same comparison estimate yields also  \eqref{eq:existence+.3}, and the fact that one can take the limit along 
an arbitrary sequence satisfying $M (N) \sim \gamma N$, for $N \to \infty$.
\item Finally, we prove  the convexity and monotonicity statements.
\end{enumerate}
\smallskip

\noindent
\emph{Step 1.} With $\gamma=p/q$ set 
$
\cZ_{j}(\go) := Z_{jq, jp, \go}$. Then one directly sees that
\begin{equation}
\label{eq:super-add-erg}
\cZ_{j_1+j_2}(\go)\,\ge \, \cZ_{j_1}(\go) \cZ_{j_2}\left(\Theta_{j_1 q,j_1 p} \go\right)\, ,
\end{equation}
where $(\Theta_{q,p} \go)_{n,m}= \go_{q+n,p+m}$. Since $\go$ is an IID sequence of $L^1$ random variables, it is straightforward to see that 
$\log \cZ_{j} \in L^1(\bbP)$. Also,   $ \vert \log \cZ_{j} \vert \leq h n+ \gb  \sup_{\gamma \in \Gamma } \sum_{(n,m)\in\gamma} |\go_{n,m}|$, where $\Gamma$ is the set of nearest-neighbors up-right paths: a Last Passage Percolation observation then tells us that a sufficient condition for having $\sup_n \frac 1n \bbE \vert \log \cZ_{j} \vert   <+\infty$ is $\bbE[\go_{1,1}^2]<+\infty$, see \cite{cf:Martin02}.
Hence we see that $\{ -\log \cZ_{j}(\go)\}_{j=1,2 , \ldots}$ satisfies the hypotheses of Kingman sub-additive ergodic theorem
(see for example \cite[Sec.~A.7]{cf:GB}), and we get that $\{ \frac 1j\log \cZ_{j}(\go)\}_{j=1,2 , \ldots}$ converges $\bbP( \dd \go)$-a.s. and in $L^1(\bbP)$.
Moreover \eqref{eq:super-add-erg} directly tells us that $\{ \bbE \log \cZ_{j}(\go)\}_{j=1,2 , \ldots}$ is super-additive,
so that $\lim _{j\to \infty} \frac 1j  \bbE \log \cZ_{j}(\go)= \sup_{j\in \bbN} \frac 1j  \bbE \log \cZ_{j}(\go)$. 
This establishes \eqref{eq:existence+.2}, and also
\eqref{eq:existence+.1}, but only for $M(N)= \gamma N$ with $\gamma \in \bbQ$ and along the subsequence satisfying 
$\gamma N \in \bbN$.

\smallskip

\noindent
\emph{Step 2.}  Still with $\gamma =p/q$, the restriction to $\gamma N \in \bbN$ can be removed by observing that we can 
write $N= jq+r$, with $r\in \{0, 1, \ldots , q-1\}$, and for $r\neq 0$ 
\begin{equation}
\label{eq:exist-step2.1}
\begin{split}
Z_{N, \lfloor \gamma N \rfloor , \go} \, &\ge\, Z_{jq, jp, \go} \exp(\gb \go_{N, \lfloor \gamma N \rfloor}+h) K \Big(r+ \Big \lfloor jp+ \frac pq r \Big \rfloor
-jp \Big) 
\\ & \ge\, c(p,q) \exp(\gb \go_{N, \lfloor \gamma N \rfloor}+h) Z_{jq, jp, \go} \, ,
\end{split}
\end{equation}
where $c(p,q)>0$.

In the same way
\begin{equation}
\label{eq:exist-step2.2}
\begin{split}
Z_{(j+1)q, (j+1)p, \go}\, &\ge \,  
Z_{N, \lfloor \gamma N \rfloor, \go} \exp(\gb \go_{(j+1)q, (j+1)p}+h)
K \Big(q-r+ (j+1)p- \Big \lfloor jp+ \frac pq r \Big \rfloor \Big)
\\
&\ge \, c(p,q) \exp(\gb \go_{(j+1)q, (j+1)p}+h)
Z_{N, \lfloor \gamma N \rfloor, \go}\, , 
\end{split}
\end{equation}
possibly redefining $c(p,q)>0$.
From \eqref{eq:exist-step2.1} and \eqref{eq:exist-step2.2} one easily removes the 
restriction to $\gamma N \in \bbN$ and establishes \eqref{eq:existence+.1} for $M(N) = \lfloor \gamma N \rfloor$ with 
$\gamma\in \bbQ$.

\smallskip

\noindent
\emph{Step 3.} We now establish \eqref{eq:existence+.1} for $M(N)= \lfloor \gamma N\rfloor$ for an arbitrary $\gamma>0$,
by proving the announced comparison bounds, upper and lower.

The upper bound is more general: if $M_2> M_1$  and if 
there exists $c>0$ such that $M_2 \le c N$
we see 
that
\begin{equation}
\label{eq:comparison-upper}
\begin{split}
Z_{N, M_1, \go}\, & = \, \sum_{n=0}^{N-1} \sum_{m=0}^{M_1-1} Z_{n, m, \go}
K(N-n+M_1-m) \exp\left(\gb \go_{N, M_1} +h\right)
\\
&\le \,  c_K N^{c_K} \exp\left(\gb (\go_{N, M_1}-\go_{N, M_2}) \right)
\\ & \quad \quad
\sum_{n=0}^{N-1} \sum_{m=0}^{M_1-1}  Z_{n, m, \go}
K(N-n+M_2-m) \exp\left(\gb \go_{N, M_2} +h\right)
\\ &\le\, c_K N^{c_K} \exp\left(\gb (\go_{N, M_1}-\go_{N, M_2}) \right) Z_{N, M_2, \go}\, ,
\end{split}
\end{equation}
where in the first inequality we have used that $K(\cdot)$ is regularly varying and that $M_2 \le cN$   to see that there exists $c_K>0$ such that
\begin{equation}
\frac{
K(N-n+M_1-m)}{
K(N-n+M_2-m)}\, \le \, c_K N^{c_K}\, ,
\end{equation}
for every $N$. For the second inequality we have relaxed the constrained $m< M_1$ to $m<M_2$. 

On the other hand, we prove a comparison  lower bound only for $M$ of the form $\lfloor \gamma N \rfloor$.
Let us choose $\gamma_2>\gamma_1>0$.
Note that for 
\begin{equation}
N'\, :=\,  \Big \lfloor \frac{\gamma_1}{\gamma_2}N \Big \rfloor - \Big \lceil \frac 2 {\gamma_2} \Big \rceil\, ,
\end{equation}
we have $\lfloor \gamma_2 N' \rfloor +1 \le  \lfloor \gamma_1 N \rfloor $ so that
\begin{equation}
\label{eq:comparison-lower}
\begin{split}
Z_{N,  \lfloor \gamma_1 N \rfloor , \go}\, &\ge \, K\left(N-N'+ \lfloor \gamma_1 N \rfloor -\lfloor \gamma_2 N' \rfloor\right)
\exp\left(\gb\go_{N,  \lfloor \gamma_1 N \rfloor }+h\right)
Z_{N',  \lfloor \gamma_2 N' \rfloor , \go}
\\
&\ge \,  \left(c_K N^{c_K}\right)^{-1} \exp\left(\gb\go_{N,  \lfloor \gamma_1 N \rfloor }+h\right)
Z_{N',  \lfloor \gamma_2 N' \rfloor , \go}\, ,
\end{split}
\end{equation}
possibly changing the value of $c_K>0$.

We now choose $0<\gamma_1<\gamma_2 \in  \bbQ$. Then
\eqref{eq:comparison-upper} implies that $\bbP(\dd \go)$-a.s.
\begin{equation}
\label{eq:fclb1}
\limsup_{N \to \infty} \frac 1N \log Z_{N,  \lfloor \gamma_1 N \rfloor , \go} \, \le \, \tf_{\gamma_2} (\gb, h)\, ,
\end{equation}
and \eqref{eq:comparison-lower} implies that $\bbP(\dd \go)$-a.s.
\begin{equation}
\label{eq:fclb2}
\liminf_{N \to \infty} \frac 1N \log Z_{N,  \lfloor \gamma_1 N \rfloor , \go} \, \ge \, 
\lim_{N \to \infty} \frac 1{N'}\log  Z_{N',  \lfloor \gamma_2 N' \rfloor , \go}\, =\, \frac{\gamma_1}{\gamma_2}
\tf_{\gamma_2}(\gb, h)
\, ,
\end{equation}
and the proof of \eqref{eq:existence+.1} is achieved in the $\bbP(\dd \go)$-a.s.\ sense
for $M(N)= \lfloor\gamma_1 N \rfloor$, by choosing a sequence of values for $\gamma_2$
converging to $\gamma_1$, defining thus $F_{\gamma_1}(\gb, h)$ also by this limit procedure.
Note that a byproduct is that \eqref{eq:existence+.3} holds, hence
%of this procedure and of the bounds we obtained is that $\gamma \mapsto 
$\gamma\mapsto \tf_\gamma(\gb, h)$ is non decreasing and (locally) Lipschitz continuous.
To upgrade \eqref{eq:existence+.1} to the $L^1(\bbP)$ sense one simply applies the expectation $\bbE[ \, \cdot\,  ]$ to the $\log$ of  \eqref{eq:comparison-lower} and \eqref{eq:comparison-upper}  so that one obtains 
$\lim_{N \to \infty} (1/N) \bbE\log Z_{N,  \lfloor \gamma N \rfloor , \go} = \tf_{\gamma}(\gb, h)$
for every $\gamma>0$, and  the first limit in  \eqref{eq:existence+.1} holds in the  $L^1(\bbP)$ sense by Scheff\'e's Lemma.

\smallskip

\noindent
\emph{Step 4.} The generalization to a sequence $M(N)\sim \gamma N$ is just made by observing that
given arbitrary $\gamma_1< \gamma_2$ with $\gamma \in (\gamma_1, \gamma_2)$
 for
$N_0$ sufficiently large we have $\lfloor \gamma_1 N \rfloor < M(N) < \lfloor \gamma_2 N \rfloor$ for every $N \ge N_0$.
At this point we can apply the comparison bounds like in the previous step and conclude by an approximation 
procedure.

\smallskip

\noindent
\emph{Step 5.}
The function  $(\beta,h) \mapsto \tf_\gamma(\beta,h)$ is  convex 
because  it is the limit of a sequence of convex functions. Monotonicity in $h$ for $\gb$ fixed is also evident from 
the finite $N$ expression. The fact that $\gb \mapsto \tf_\gamma(\beta,h)$ is non increasing for $\gb \le 0$ and 
non decreasing for $\gb \ge 0$ follows from convexity and the fact that $\partial _\gb \bbE \log Z_{N,M,\omega}^{\beta,h}=0$
(by direct computation, since the $\go$ variables are centered), so
$\partial_\gb \tf_\gamma(\beta,h) \vert_{\gb=0}=0$.
This completes the proof of Proposition~\ref{th:existence+}.
\qed

\smallskip

We  now compare  the constrained and the free partition function:
\begin{lemma}
\label{th:zfc}
For any $\ga_+ > \ga$,
there exists $C$ such that for every $N ,M \in \bbN$ and 
\begin{multline}
Z^{c}_{N,M,\omega} \le Z^{f}_{N,M,\omega} \le \\
 Z^{c}_{N,M,\omega}  \times \Big( 1 + C   (N+M)^{3+\ga_+} e^{-\beta \omega_{N,M}} \suptwo{1 \le n \le N}{1 \le m \le M } \lbrace e^{\beta \omega_{n,M} }, e^{ \beta \omega_{N,m} } \rbrace   \Big) \,.
\end{multline}
\end{lemma}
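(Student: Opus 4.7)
The lower bound $Z^c_{N,M,\omega}\le Z^f_{N,M,\omega}$ is immediate from the definitions: the constrained partition function restricts the free expectation to trajectories pinned at $(N,M)$, which form a sub-event.

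For the upper bound, my plan is to decompose $Z^f_{N,M,\omega}$ according to the last renewal point of $\tau$ lying in $[0,N]\times[0,M]$. Since inter-arrivals are in $\mathbb{N}^2$, once such a ``last'' point $(n,m)$ is fixed, the next renewal step automatically exits $[1,N]\times[1,M]$ whenever $n=N$ or $m=M$; for $(n,m)$ with $n<N$, $m<M$ it exits with some probability $q(n,m)\le 1$. This yields
\begin{equation*}
Z^f_{N,M,\omega}\,=\,Z^c_{N,M,\omega}+\sum_{n<N,\,m<M}Z^c_{n,m,\omega}\, q(n,m)+\sum_{m<M}Z^c_{N,m,\omega}+\sum_{n<N}Z^c_{n,M,\omega},
\end{equation*}
and the task reduces to bounding each of the three remaining sums by the claimed factor times $Z^c_{N,M,\omega}$.

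\emph{Interior sum.} I would exploit the exact identity obtained by decomposing $Z^c_{N,M,\omega}$ at its own penultimate renewal,
\begin{equation*}
Z^c_{N,M,\omega}\,=\,e^{\beta\omega_{N,M}+h}\sum_{n<N,\,m<M}Z^c_{n,m,\omega}\,K(N-n+M-m).
\end{equation*}
The Potter-type lower bound $K(j)\ge c\,j^{-(2+\alpha_+)}$ on $\{1,\dots,N+M\}$ gives $q(n,m)\le 1\le C(N+M)^{2+\alpha_+}K(N-n+M-m)$, and term-by-term comparison bounds the interior sum by $C(N+M)^{2+\alpha_+}e^{-\beta\omega_{N,M}}Z^c_{N,M,\omega}$, with room to spare in the polynomial exponent.

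\emph{Boundary sums.} For the top-edge sum (the right-edge case is symmetric), I would decompose both $Z^c_{N,m,\omega}$ and a restricted lower bound for $Z^c_{N,M,\omega}$ at their \emph{common} penultimate renewal $(n',m')\in[0,N-1]\times[0,m-1]$:
\begin{align*}
Z^c_{N,m,\omega}&\,=\,e^{\beta\omega_{N,m}+h}\sum_{n',m'}Z^c_{n',m',\omega}\,K(N-n'+m-m'),\\
Z^c_{N,M,\omega}&\,\ge\,e^{\beta\omega_{N,M}+h}\sum_{n',m'}Z^c_{n',m',\omega}\,K(N-n'+M-m').
\end{align*}
The Potter bound $K(j_1)/K(j_2)\le C(N+M)^{2+\alpha_+}$, uniform for $1\le j_1\le j_2\le N+M$, applied term by term yields $Z^c_{N,m,\omega}\le C(N+M)^{2+\alpha_+}e^{\beta(\omega_{N,m}-\omega_{N,M})}Z^c_{N,M,\omega}$. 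Summing over $m<M$ and factoring out $\sup_{m}e^{\beta\omega_{N,m}}$ produces the additional factor $M\le N+M$, and combining with the symmetric right-edge contribution delivers the $(N+M)^{3+\alpha_+}$ prefactor in the statement.

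The main obstacle is the boundary: because a renewal step must strictly advance \emph{both} coordinates, a single inserted jump from $(N,m)$ or $(n,M)$ cannot reach $(N,M)$, forcing the comparison to go through the penultimate renewal. This is exactly what produces the supremum of $e^{\beta\omega_{N,m}}$ and $e^{\beta\omega_{n,M}}$ in the statement. The careful Potter-bound handling of the slowly varying function $L$ in $K(j)=L(j)/j^{2+\alpha}$ is, in turn, the reason one must allow any $\alpha_+>\alpha$ rather than $\alpha$ itself.
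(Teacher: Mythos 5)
Your proposal is correct and follows essentially the same route as the paper's own proof: decompose $Z^f_{N,M,\omega}$ over the last renewal in $[0,N]\times[0,M]$, absorb the interior terms via the penultimate-renewal identity for $Z^c_{N,M,\omega}$ together with the Potter lower bound on $K$, and handle the two edge sums by a term-by-term comparison through the common penultimate renewal, which is precisely where the supremum of $e^{\beta\omega_{n,M}}$, $e^{\beta\omega_{N,m}}$ and the extra power of $N+M$ arise. The paper states the edge comparison without spelling it out; your argument supplies exactly the intended justification, so there is nothing new or missing.
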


\begin{proof}
The lower bound is trivial: we have $Z_{N,M,\omega}^{f} \geq Z^{f}_{N,M,\omega} (	(N,M)\in\tau) =Z_{N,M,\omega}^{c}$, where we introduced the notation $Z^{f}_{N,M,\omega} (	A) = 
 \bE \big[ \exp \big( \sum_{n=1}^N \sum_{m=1}^M (\beta \omega_{n,m} +h) \mathbf{1}_{(n,m) \in \tau} \big) \ind_A \big]$.

On the other hand, for $N,M \ge 1$, we have
\begin{multline}
Z^{f}_{N,M,\omega} = \sum_{n=0}^N \sum_{m=0}^{M} Z^{f}_{N,M,\omega} \big( \tau \cap [n,N] \times [m,M] = \lbrace (n,m) \rbrace \big) \\
 \leq Z_{N,M,\omega}^c +\sum_{n=0}^{N-1} \sum_{m=0}^{M-1}  Z_{n,m,\omega}^{c} + \sum_{n=0}^{N-1} Z_{n,M,\omega}^{c} +\sum_{m=0}^{M-1}  Z_{N,m,\omega}^{c}  \, .
 \label{eq:zf4}
\end{multline}
%
%Then we write
%\begin{align}
%\label{eq:zf4}
%Z^{f}_{N,M,\omega} = Z^{c}_{N,M,\omega} + \sum_{n=0}^{N-1}  \sum_{m=0}^{M-1} Z^{c}_{n,m,\omega} \bP(\tau_1\cap(&0,N-n ]\times (0,M-m]=\emptyset ) \notag\\
%& + \sum_{n=0}^{N-1} Z^{c}_{n,M,\omega} + \sum_{m=0}^{M-1} Z^{c}_{N,m,\omega}   \, ,
%\end{align}
%from which we see that the lower bound holds.
%For the upper bound, 
Now, observe that for any $n\leq N-1$, $m\leq M-1$,
\begin{equation}
 Z^{c}_{n,m,\omega}  
\le C_1 (N+M)^{2+ \ga_+} e^{ -h - \beta \omega_{N,M} } Z^{c}_{n,m,\omega} K (M+N -n-m) e^{  h+\beta \omega_{N,M} }
\end{equation}
for any $\ga_+ > \ga$, so that
\begin{equation}
\sum_{n=0}^{N-1} \sum_{m=0}^{M-1} Z^{c}_{n,m,\omega} \leq  C_1 (N+M)^{2+ \ga_+}  Z^{c}_{N,M,\omega}e^{-h - \beta \omega_{N,M} } .
\end{equation}

For $n < N$ and $m=N$, there exists $C_2$ such that 
\begin{equation}
Z^{c}_{n,M,\omega} \le C_2 N^{2+\ga_+} \, Z^{c}_{N,M,\omega} \, \exp \left( \beta \omega_{n,M} - \beta \omega_{N,M} \right) \,,
\end{equation}
and we obtain
\begin{equation}
\sum_{n=0}^{N-1} Z^{c}_{n,M,\omega}  \le C_2 N ^{3 +\ga_+ }  \, \sup_{1 \le n \le N} \lbrace \exp( \beta \omega_{n,M} ) \rbrace \, e^{-\beta \omega_{N,M}} \, Z^{c}_{N,M,\omega} \,.
\end{equation}
The analogous holds for the last term in \eqref{eq:zf4},
%there exists 
%$C_3 >0$ 
%\begin{equation}
%\sum_{m=0}^{M-1} Z^{c}_{N,m,\omega} \le  C_3 M^{3 +\ga_+ }  \, \sup_{1 \le m \le M} \lbrace \exp(\beta \omega_{N,m} ) \rbrace \, e^{-\beta \omega_{N,M}} \, Z^{c}_{N,M,\omega} \,.
%\end{equation}
and the proof is therefore complete.
\end{proof}

From Lemma \ref{th:zfc}, and using also that $\lim_{N\to\infty} \frac1N \sup_{1\leq n\leq N} \go_{n,M} = 0$ $\bbP$-a.s.\ (note that $\bbP(|\go_1|>x) =o(1/x)$, since $\bbE[|\go_1|]<+\infty$), it follows that Theorem~\ref{th:fq} also holds for the free model, namely:
\begin{equation}
\label{eq:saf.2}
\tf_{\gamma}(\beta,h) \, =\,  \lim_{N\to\infty}  \frac 1N \log Z^{f,\gb,h}_{N,M(N),\omega} \qquad \bbP(\dd \go)\text{-a.s.  and in } L^1(\bbP)\, .
\end{equation}

\smallskip 

We now introduce some notation that is used later in the paper:
for positive integers  $a_1 < a_2$ and $b_1 < b_2$, we define the partition function of the system on $[a_1,a_2] \times [b_1,b_2]$ by
\begin{equation}
\label{eq:pp}
 Z_{(a_1,b_1),(a_2,b_2),\omega} :=   \bE \bigg[ \exp \bigg( \sum_{n=a_1+1}^{a_2} \sum_{m=b_1+1}^{b_2} (\beta \omega_{n,m} +h) \mathbf{1}_{(n,m) \in \tau} \bigg) \mathbf{1}_{(a_2,b_2)\in \tau} \,  \bigg| \, (a_1,b_1) \in \tau \bigg] \,,
\end{equation}
with the convention that $Z_{(a_1,b_1),(a_1,b_1),\omega} = 1$ and $ Z_{(a_1,b_1),(a_1,b_2),\omega}= Z_{(a_1,b_1),(a_2,b_1),\omega}=0$.

\section{Upper bound on the critical point shift}
\label{sec:irrel}

The arguments in this section follow the line of proof of H.~Lacoin in \cite{cf:Lac}, and is mainly based on a second moment computation. We start with some preliminary results.

\begin{proposition}
\label{th:unint1}
If ${ \lbrace {Z^{f,\gb,h_c^{a}(\gb)}_{N,M(N),\omega}} \rbrace}_N$ is \emph{uniformly integrable} ,
there exists $\zeta >0$ such that for every sequence of events $\{A_N\}_{N=1, 2, \ldots}$ satisfying $\lim_{N} \bP(A_N)=0$
there is $N_0\in \bbN $ such that 
\begin{equation}
\label{eq:unint1}
\inf_{N \ge N_0}
\bbP \left( \bP^{f,\gb,h_c^{a}(\gb)}_{N,M(N),\omega} (A_N)\le \frac 12
\text{ and } Z^{f,\gb,h_c^{a}(\gb)}_{N,M(N),\omega} > \frac 12  \right)  \, \ge\, \zeta\,.
\end{equation} 

\end{proposition}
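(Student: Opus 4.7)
The plan is to exploit two annealed identities available precisely at $h = h_c^{a}(\gb) = -\log Q(\gb)$, combined with the uniform integrability hypothesis. By Fubini's theorem and independence of the disorder,
\[
\bbE\big[Z^{f,\gb,h_c^{a}(\gb)}_{N,M,\omega}\big]
\, =\, \bE\Big[\big(Q(\gb) e^{h_c^{a}(\gb)}\big)^{|\tau \cap [1,N]\times[1,M]|}\Big]\, =\, 1,
\]
since $Q(\gb) e^{h_c^{a}(\gb)} = 1$ by \eqref{eq:hann}. The very same computation applied to the restricted partition function $Z^{f}_{N,M,\omega}(A_N) := \bE\big[\exp\big(\sum_{n,m} (\gb\omega_{n,m}+h)\mathbf{1}_{(n,m)\in\tau}\big)\mathbf{1}_{A_N}\big]$ yields $\bbE[Z^{f}_{N,M,\omega}(A_N)] = \bP(A_N)$, and hence $\bbE\big[Z^f_{N,M,\omega} \cdot \bP^{f}_{N,M,\omega}(A_N)\big] = \bP(A_N)$.

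Next, I would combine uniform integrability with $\bbE[Z^f_{N,M(N),\omega}] = 1$ to extract a uniform lower bound on $\bbP(Z^f_{N,M(N),\omega} > 1/2)$. Splitting $1 = \bbE[Z^f \mathbf{1}_{Z^f \le 1/2}] + \bbE[Z^f \mathbf{1}_{Z^f > 1/2}]$ yields $\bbE[Z^f \mathbf{1}_{Z^f > 1/2}] \ge 1/2$; uniform integrability furnishes $K>0$ with $\sup_N \bbE[Z^f \mathbf{1}_{Z^f > K}] \le 1/4$, so $\bbE[Z^f \mathbf{1}_{1/2 < Z^f \le K}] \ge 1/4$ and $K \cdot \bbP(Z^f > 1/2) \ge 1/4$, that is $\bbP(Z^f_{N,M(N),\omega} > 1/2) \ge 1/(4K) =: 2\zeta$ uniformly in $N$.

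To conclude, Markov's inequality applied to the identity of the first step gives $\bbP\big(Z^f_{N,M(N),\omega} \cdot \bP^{f}_{N,M(N),\omega}(A_N) > 1/4\big) \le 4\,\bP(A_N)$, which tends to $0$ by assumption. On the event $E_N := \{Z^f_{N,M(N),\omega} > 1/2\} \cap \{Z^f_{N,M(N),\omega} \cdot \bP^{f}_{N,M(N),\omega}(A_N) \le 1/4\}$ one has $\bP^{f}_{N,M(N),\omega}(A_N) \le 1/(4 Z^f_{N,M(N),\omega}) < 1/2$, so $E_N$ is included in the event appearing in \eqref{eq:unint1}; and
\[
\bbP(E_N) \, \ge\, \bbP(Z^f_{N,M(N),\omega} > 1/2) - \bbP\big(Z^f_{N,M(N),\omega} \cdot \bP^{f}_{N,M(N),\omega}(A_N) > 1/4\big) \, \ge\, 2\zeta - 4\bP(A_N)\, \ge\, \zeta
\]
for $N$ large enough, which yields the claim. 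There is no real obstacle in this argument; the point to be attentive to is that the annealed identity $\bbE[Z^f] = 1$ holds only for the \emph{free} partition function, which is why the proposition is stated in terms of $Z^{f}$ rather than $Z^{c}$.
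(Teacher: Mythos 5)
Your proposal is correct and follows essentially the same route as the paper: the annealed identities $\bbE[Z^f_{N,M,\omega}]=1$ and $\bbE[Z^f_{N,M,\omega}\,\bP^f_{N,M,\omega}(A_N)]=\bP(A_N)$ via Fubini, uniform integrability to get $\inf_N\bbP(Z^f>1/2)\ge 2\zeta$ (you spell out the elementary argument that the paper delegates to \cite[Lemma~4.6]{cf:G}), and a Markov-type bound to kill $\bbP(Z^f>1/2,\ \bP^f(A_N)>1/2)$. The small cosmetic difference is that you apply Markov to $Z^f\bP^f(A_N)$ directly rather than to $Z^f\ind_{\{\bP^f(A_N)>1/2\}}$, but the two calculations are equivalent and give the same $4\bP(A_N)$ bound.
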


%\begin{equation}
%\label{eq:AZ}
%\bbE \left[ \bP^{f,\gb,h_c^{a}(\gb)}_{N,M,\omega} (A_N) \mathbf{1}_{\{ Z^{f,\gb,h_c^{a}(\gb)}_{N,M,\omega} > 1/2 \}} \right] < \delta \,.
%\end{equation} 

\begin{proof}
We set $h=h_c^{a}(\gb)$. It is sufficient to prove that there exists $\zeta>0$ such that
\begin{equation}
\label{eq:infz}
\inf_N \bbP \Big( Z^{f,\gb,h_c^{a}(\gb)}_{N,M,\omega} > \frac 12 \Big) \, \ge \,  2\zeta \,.
\end{equation}
and   
\begin{equation}
\label{eq:newAZ}
\lim_{N \to \infty}
\bbP \Big( \bP^{f,\gb,h_c^{a}(\gb)}_{N,M,\omega} (A_N)>\frac 12
\text{ and } Z^{f,\gb,h_c^{a}(\gb)}_{N,M(N),\omega} > \frac 12  \Big)  \, =\, 0\,.
\end{equation} 

Since $ \bbE [ Z^{f,\gb,h_c^{a}(\gb)}_{N,M,\omega} ] = 1$, and because ${ \lbrace {Z^{f,\gb,h_c^{a}(\gb)}_{N,M,\omega}} \rbrace}_{N}$ is uniformly integrable, then \eqref{eq:infz} follows immediately from \cite[Lemma~4.6]{cf:G}.

For \eqref{eq:newAZ} we observe that the Fubini-Tonelli Theorem implies
\[
\bbE \left[ Z^{f,\gb,h_c^{a}(\gb)}_{N,M,\omega} \bP^{f,\gb,h_c^{a}(\gb)}_{N,M,\omega} (A_N) \right] = \bbE \bE  \Big[ \exp \Big( \sum_{n=1}^N \sum_{m=1}^M (\beta \omega_{n,m} - \log Q(\beta) )  \delta_{n,m} \Big) \mathbf{1}_{ A_{N}} \Big]
= \bP(A_N) \,,
\]
with $ \delta_{n,m} := \mathbf{1}_{(n,m) \in \tau}$. Hence $\lim_{N\to\infty} \bbE \big[ Z^{f,\gb,h_c^{a}(\gb)}_{N,M,\omega} \bP^{f,\gb,h_c^{a}(\gb)}_{N,M,\omega} (A_N) \big] =0$
and \eqref{eq:newAZ} follows because
\begin{multline}
\bbP \Big( Z^{f,\gb,h_c^{a}(\gb)}_{N,M,\omega}> \frac 12 \text{ and } \bP^{f,\gb,h_c^{a}(\gb)}_{N,M,\omega} (A_N)> \frac 12 \Big)\, 
=\, \bbP \Big( Z^{f,\gb,h_c^{a}(\gb)}_{N,M,\omega}\ind_{ \{\bP^{f,\gb,h_c^{a}(\gb)}_{N,M,\omega} (A_N)> \frac 12 \}}> \frac 12  \Big)\\
\,  \le\,  2\, \bbE \Big[ Z^{f,\gb,h_c^{a}(\gb)}_{N,M,\omega} \ind_{ \{\bP^{f,\gb,h_c^{a}(\gb)}_{N,M,\omega} (A_N)> \frac 12 \}} \Big] \, \le \, 4
\bbE \left[ Z^{f,\gb,h_c^{a}(\gb)}_{N,M,\omega} \bP^{f,\gb,h_c^{a}(\gb)}_{N,M,\omega} (A_N) \right]\, .
\end{multline}
\end{proof}

We now prove that ${ \lbrace {Z^{f,\gb,h_c^{a}(\gb)}_{N,M(N),\omega}} \rbrace}_{N}$ is uniformly integrable
(and this holds for an arbitrary choice of  $M(N)$) provided that the intersection renewal $\sigma = \tau \cap \tau'$ is terminating -- $\tau$ and $\tau'$ are two independent copies of $\tau$ -- and $\gb$ is small enough. Let us point out that, since $\sigma $ is a terminating renewal then the total number $\vert \sigma \vert$ of renewal points (except the origin), that is 
$\vert \sigma \vert= \sum_{(n,m)\in \bbN^2} \tilde \delta_{n,m}$ with $\tilde \delta_{n,m} = \mathbf{1}_{(n,m) \in \sigma}$, is a geometric random variable of parameter 
$\bP^{\otimes 2} ( \sigma_1 < \infty)$,  where $\sigma_1 < \infty$ simply means that both components of $\sigma_1$ are finite. 
This in particular implies that 
$\bP^{\otimes 2} ( \sigma_1 < \infty) =  1/\bE^{\otimes 2} [\vert \sigma \vert]$. Moreover it is straightforward to see that  
$\bE^{\otimes 2} [\vert \sigma \vert]= \sum_{n,m} \bP((n,m) \in \tau)^2$.
\medskip

\begin{lemma}
\label{th:tau'}
If $\sigma:=\tau\cap \tau'$ is terminating, then defining
\begin{equation}
\label{eq:beta1}
0< \beta_1 := \sup \Big\{ \, \beta : \, \log Q(2 \beta) - 2 \log Q(\beta)  < - \log \bP^{\otimes 2} ( \sigma_1 < \infty)  \Big\} \, ,
\end{equation}
we have that for every $\beta \in (0,\beta_1)$ the sequence ${ \lbrace {Z^{f,\gb,h_c^a(\gb)}_{N,M(N),\omega}} \rbrace}_{N}$ is bounded in $L^2(\bbP)$, and is therefore \emph{uniformly integrable}.
\end{lemma}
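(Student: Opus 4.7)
The plan is the standard second-moment computation with two independent replicas of $\tau$. Writing $\delta_{n,m} = \mathbf{1}_{(n,m)\in\tau}$ and $\delta'_{n,m}$ for an independent copy $\tau'$, squaring the free partition function \eqref{eq:free}, applying Fubini, and integrating the disorder site by site shows that the $\omega$-average at each site $(n,m)$ contributes $1$, $Q(\beta)e^h$, or $Q(2\beta)e^{2h}$ according to whether $\delta_{n,m}+\delta'_{n,m}$ equals $0$, $1$, or $2$. The key point is that at $h = h_c^{a}(\beta) = -\log Q(\beta)$ the single-visit factor $Q(\beta)e^h$ equals exactly $1$, so only the sites visited by \emph{both} replicas produce a non-trivial factor.

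Setting $\lambda(\beta) := \log Q(2\beta) - 2\log Q(\beta) \ge 0$ (nonnegativity is Cauchy--Schwarz for $\omega \mapsto e^{\beta\omega}$), this reduces to
\begin{equation*}
\bbE\big[(Z^{f,\beta,h_c^a(\beta)}_{N,M,\omega})^2\big] \,=\, \bE^{\otimes 2}\Big[\exp\!\big(\lambda(\beta)\,\big|\sigma\cap [1,N]\times[1,M]\big|\big)\Big] \,\le\, \bE^{\otimes 2}\big[e^{\lambda(\beta)|\sigma|}\big],
\end{equation*}
where $\sigma := \tau\cap\tau'$. Uniform $L^{2}$-boundedness (automatically giving uniform integrability) along any sequence $(N,M(N))$ therefore reduces to finiteness of the exponential moment of the \emph{total} number of intersection points.

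When $\sigma$ is terminating, the excerpt already records that $|\sigma|$ is geometric with success probability $p := \bP^{\otimes 2}(\sigma_1 = \infty) > 0$, so
\begin{equation*}
\bE^{\otimes 2}\big[e^{\lambda|\sigma|}\big] \,=\, \frac{p}{1-e^{\lambda}(1-p)},
\end{equation*}
finite exactly when $\lambda < -\log \bP^{\otimes 2}(\sigma_1 < \infty)$. This matches precisely the condition defining $\beta_1$ in \eqref{eq:beta1}. Since $\lambda(0)=0$ and $-\log \bP^{\otimes 2}(\sigma_1<\infty) > 0$ under the terminating assumption, continuity of $\lambda(\cdot)$ (from the exponential moment assumption on $\omega$) yields $\beta_1 > 0$, and for every $\beta \in (0,\beta_1)$ the second moment is bounded uniformly in $N,M$, which is the claim.

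There is no real obstacle here: the argument is a clean replica identity followed by an elementary geometric series. The only step that merits care is making sure the annealed shift is absorbed exactly at $h=h_c^a(\beta)$, so that the replica sum collapses onto the intersection renewal $\sigma$ rather than onto $\tau\cup\tau'$; this is what makes the termination of $\sigma$ (rather than of $\tau$ itself) the right criterion, in agreement with the well-known picture for the standard PS model.
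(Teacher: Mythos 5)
Your proof is correct and follows essentially the same path as the paper: the replica/Fubini second-moment computation reducing to the exponential moment of $|\sigma|$, followed by the geometric-series criterion that exactly yields \eqref{eq:beta1}. The observations you add (nonnegativity of $\lambda(\beta)$ by Cauchy--Schwarz, continuity giving $\beta_1>0$) are small, correct clarifications rather than a different route.
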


\begin{proof} 
We write $M=M(N)$ and we compute the second moment of the partition function:
\begin{equation}
\label{eq:Z2}
\begin{split}
\bbE \Big[ { \left( {Z_{N,M,\omega}^{f,\beta ,h_c^a(\beta)}} \right) }^2 \Big] & = 
 \bE^{\otimes 2} \Big[ \bbE \Big[ \exp \Big( \sum_{n=1}^N \sum_{m=1}^M (\beta \omega_{n,m} - \log Q(\beta)) (\delta_{n,m}+\delta'_{n,m}) \Big) \Big]  \Big] \\
 & = \bE^{\otimes 2} \Big[ \exp \Big( \sum_{n=1}^N \sum_{m=1}^M (\log Q(2 \beta) - 2 \log Q(\beta)) \tilde\delta_{n,m} \Big)  \Big] \,.
\end{split}
\end{equation}
The sequence ${ \lbrace Z_{N,M,\omega}^{f,\beta ,h_c^a(\beta)} \rbrace}_{N}$ is bounded in  $L^2(\bbP)$ if 
\begin{equation}
\label{eq:tau'}
\bE^{\otimes 2} \Big[ \exp \Big( \sum_{n,m=1}^{\infty}  (\log Q(2 \beta) - 2 \log Q(\beta)) \tilde\delta_{n,m} \Big) \Big] < \infty \,.
\end{equation} 
Since  $\vert \sigma \vert$  is a geometric random variable of parameter $\bP^{\otimes 2} ( \sigma_1 < \infty)$, \eqref{eq:tau'} holds if 
\begin{equation}
 \log Q(2 \beta) - 2 \log Q(\beta)  < - \log \bP^{\otimes 2} ( \sigma_1 < \infty) %= \log \bE^{\otimes 2} [ \vert \sigma \vert] 
 \,.
\end{equation} 
\end{proof}

%We now have all the ingredients to complete the proof of Theorem \ref{thm:irrel}.

\begin{proof}[Proof of Theorem \ref{thm:irrel}]
In view of what we want to prove and of
Proposition~\ref{th:existence+}, notably the explicit continuity estimate 
\eqref{eq:existence+.3}, it suffices to establish the result for $\gamma\in \bbQ$
and for $M=\lfloor \gamma N \rfloor$, which we shall assume till the end of the proof,
even if this explicit choice is used in full only at the very end.

%In view of \eqref{eq:inh}, to prove the first statement we have to show only that $h_c^q(\beta) \le h_c^a (\beta)$ for $\beta<\beta_1$. 

Because of Lemma \ref{th:tau'}, we have that the sequence ${ \lbrace Z_{N,M,\omega}^{f,\beta ,h_c^a(\beta)} \rbrace}_{N}$ is uniformly integrable for $\beta<\beta_1$.
Now for all $0<\eta < \ga$ (recall that if $\sigma$ is terminating, it implies that $\ga\leq 1$), we set
\begin{equation}
\label{eq:AN}
A_{N} \,:=\, \left\lbrace \vert \tau \cap \left((0,N] \times (0,M] \right) \vert \le  N^{\eta} \right\rbrace \,.
\end{equation}
 From Lemma~\ref{th:limp}, we have 
 that $\lim_N \bP(A_{N}) =0$. Observe also that
\begin{equation}
\label{eq:binfz}
\begin{split}
Z_{N,M,\omega}^{f,\beta ,h_c^a(\beta)+h} & = Z_{N,M,\omega}^{f,\beta ,h_c^a(\beta)} \bE_{N,M,\omega}^{f,\beta ,h_c^a(\beta)} \left[ \exp \left( h \vert \tau \cap \left((0,N]\times(0,M] \vert \right) \right)\right] \\
& \ge Z_{N,M,\omega}^{f,\beta ,h_c^a(\beta)} \bP_{N,M,\omega}^{f,\beta ,h_c^a(\beta)} \left( {A}^{c}_{N} \right) \exp \left(  h N^{\eta} \right) \,.
\end{split}
\end{equation}

Let us call $E_N$ the event whose probability is estimated from below in \eqref{eq:unint1}. Then on $E_N$, whose  
probability is at least $\zeta >0$, we have
\begin{equation}
\label{eq:zfb}
Z_{N,M,\omega}^{f,\beta ,h_c^a(\beta)+h} 
\ge \frac{1}{2} \Big( 1- \bP_{N,M,\omega}^{f,\beta ,h_c^a(\beta)} \left( A_{N}  \right) \Big) \exp \left( h  N^\eta\right) 
 \ge  \frac1 4 \exp \left(  h N^{\eta}\right) \,.
\end{equation}
Therefore we obtain
\begin{equation}
\label{eq:zfpb}
 \bbP \Big( Z_{N,M,\omega}^{f,\beta ,h_c^a(\beta)+h} \ge \frac 14 \exp \left(  h  N^{\eta}\right) \Big) \,\ge\,  \bbP\left(E_N\right) \,\ge\,  \zeta \, .
\end{equation}

Our aim is to prove that $\tf_\gamma(\beta, h+ h_c^{a}(\beta)) >0$ or more precisely give a lower bound for $\tf_\gamma(\beta, h+ h_c^{a}(\beta))$. We aim at using \eqref{eq:existence+.2}, this is why  we  have chosen $\gamma\in \bbQ$, and now we choose also $N$ such that
$\gamma N \in \bbN$, so $N=jq$, $j\in \bbN$ ($\gamma=p/q$). 
Since the first part of the proof exploits the free partition function, and not the constrained one for which \eqref{eq:existence+.2} holds, we use
Lemma~\ref{th:zfc} that guarantees that 
\begin{equation}
\label{eq:zfc1}
 \log Z^{f, \beta , h_c^a(\beta) +h}_{N,M,\omega} \le 
 \log Z^{c, \beta , h_c^a(\beta) +h}_{N,M,\omega}
+ c_1\Big( 1 +  \log  (N+M)  + \beta | \omega_{N,M}| \Big)  \,.
\end{equation}
Since there exists  $c_2 >1$ such that $\beta | \omega_{N,M}| < c_2 \log (N+M)$ with probability at least $1-\zeta/2$,  and recalling that $M\sim \gamma N$, we get that there exists $c_3>0$ such that
\[\bbP \Big(\log Z^{c, \beta , h_c^a(\beta) +h}_{N,M,\omega}  \le \log Z^{f, \beta , h_c^a(\beta) +h}_{N,M,\omega}  - c_3 \log N \Big) \leq \frac{\zeta}{2} \, .\]
Combining this with \eqref{eq:zfpb}, we get that
\begin{equation}
\label{eq:hA}
\bbP \Big(  \log Z^{c, \beta , h_c^a(\beta) +h}_{N,M,\omega}  \, \ge\,  \frac{1}{2} h N^{ \eta} - c_3\log N \Big) \, \ge\,  \frac \zeta 2
 \, .
\end{equation}
Now using the uniform bound $Z^{c, \beta , h_c^a(\beta)}_{N,M,\omega} \ge K(N+M) e^{\beta \omega_{N,M} - \log Q(\beta) }$ on the event $E_n^c$, we arrive at
\begin{equation}
\label{eq:irZ}
\begin{split}
 \bbE \log Z_{N,M,\omega}^{c,\beta ,h_c^a(\beta)+h} 
& \ge  \frac{\zeta}{4}   h  N^{\eta} - \frac{c_3\zeta}{2} \log N  + \log K(N+M) - \beta \bbE [ |\omega_{1,1}| ] - \log Q(\beta) \\
& \ge  c_4 h  N^{\eta} -  c_5\log N \, ,
\end{split}
\end{equation}
for suitably chosen $c_4,c_5 >0$.

At this point the choice $\gamma=p/q$ and $M=\gamma N \in \bbN$ enters the game. By 
\eqref{eq:existence+.2} we have
\begin{equation}
\label{eq:irf}
 \tf_{\gamma}(\gb,h_c^a(\gb) +h) \geq \sup_{N=jq : \, j =j_0, j_0+1, \ldots}  \left\{ c_4 h N^{\eta-1} - c_5  N^{-1} \log N \right\} \,,
 \end{equation} 
 and the fact that $j$ has to be chosen larger than a certain $j_0$ just reflects the fact that the estimates in this proof have been performed for a $N$  larger than a suitable $N_0$.
  We now estimate from below the right-hand side in \eqref{eq:irf} by choosing
  $N= h^{-\frac{1+\gep}{\eta}}$ (for some $\gep>0$ fixed): this means that we have chosen $h=(jq)^{-\eta/(1+\gep)}$. 
With this choice
\begin{equation}
\tf_{\gamma}(\gb,h_c^a(\gb) +h) \geq  c_4 h^{-\gep} N^{-1} - c_5\frac{1+\gep}{\eta}  N^{-1} \log \frac 1h \geq  N^{-1} = h^{\frac{1+\gep}{\eta}}\, ,
\end{equation}
where the last inequality holds provided that $h$ is small enough.
This is the estimate we were after since we can choose $\eta$ arbitrarily close to $\ga$ and $\gep$ close to $0$, but we have established it only for $h$ of the form $(jq)^{-\eta/(1+\gep)}$, $j=j_0, j_0+1, \cdots$.
However, we can use that $h\mapsto \tf_{\gamma}(\gb,h_c^a(\gb) +h)$ is non decreasing: having demonstrated that
$\tf_{\gamma}(\gb,h_c^a(\gb) +h)\ge h^{\frac{1+\gep}{\eta}}$ for $h=h_j:=(jq)^{-\eta/(1+\gep)}$
implies that $\tf_{\gamma}(\gb,h_c^a(\gb) +h)\ge \tfrac12 h^{\frac{1+\gep}{\eta}}$ for every sufficiently small $h$ (
this can be verified by checking that 
$\tfrac12 h_j^{\frac{1+\gep}{\eta}}$ is smaller than $h_{j+1}^{\frac{1+\gep}{\eta}}$).
This completes the proof of Theorem \ref{thm:irrel}.
\end{proof}

\medskip

The technique used to prove Theorem \ref{thm:irrel} could be adapted for $\ga >1$ to deduce the upper bound for the difference between quenched and annealed critical points. 

\begin{proposition}
Let $\ga >1$. There exists a slowly varying function $\tilde{L}(\cdot)$ such that
\begin{equation}
h_c^q(\beta) - h_c^a (\beta) \le 
\tilde{L}(1/\gb) \,  \gb^{\frac{2\ga}{\ga-1} \vee 4} \, , 
\end{equation}
for $\beta \le 1$.
\end{proposition}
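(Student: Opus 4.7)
The plan is to adapt the Paley-Zygmund strategy of Theorem~\ref{thm:irrel} to the regime $\alpha>1$, where $\sigma=\tau\cap\tau'$ is persistent and the sequence $\{Z^{f,\beta,h_c^a(\beta)}_{N,M,\omega}\}_N$ is unbounded in $L^2(\bbP)$. One therefore cannot hope for a second moment bound uniform in $N$; instead I would work at a carefully tuned finite scale $N=N(\beta)$ and choose the smallest $h>0$ for which the first moment growth still beats the second moment blow-up. Proceeding exactly as in the computation of Lemma~\ref{th:tau'}, one obtains
\begin{equation*}
\bbE\bigl[(Z^{f,\beta,h_c^a(\beta)+h}_{N,M,\omega})^2\bigr] = \bE^{\otimes 2}\bigl[\exp\bigl(h\,(|\tau\cap A|+|\tau'\cap A|)+\lambda_\beta\,|\sigma\cap A|\bigr)\bigr],
\end{equation*}
with $A=(0,N]\times(0,M]$ and $\lambda_\beta:=\log Q(2\beta)-2\log Q(\beta)=O(\beta^2)$ as $\beta\searrow 0$, and Cauchy-Schwarz gives
\begin{equation*}
\frac{\bbE[(Z^{f,\beta,h_c^a(\beta)+h}_{N,M,\omega})^2]}{\bbE[Z^{f,\beta,h_c^a(\beta)+h}_{N,M,\omega}]^2} \le \frac{\bE[\exp(2h|\tau\cap A|)]}{\bE[\exp(h|\tau\cap A|)]^2}\cdot \bE^{\otimes 2}\bigl[\exp(2\lambda_\beta|\sigma\cap A|)\bigr]^{1/2}.
\end{equation*}

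The first factor I would control via Theorem~\ref{th:beta0}: for $\alpha>1$ one has $\tf_\gamma(0,h)\sim c_0 h$, and smoothness/convexity of $\tf_\gamma(0,\cdot)$ near zero forces this factor to be at most $\exp(C_1 h^2 N)$. The second factor is the square root of the pinning partition function of the bivariate renewal $\sigma$ at bias $2\lambda_\beta$. To estimate it I would use $\bP((n,m)\in\sigma)=\bP((n,m)\in\tau)^2$ together with the bivariate local-limit estimates of Appendix~\ref{app:bivrenewal} (in the finite-mean regime $\bP((n,m)\in\tau)\asymp a_n^{-1}\phi_\alpha((m-n)/a_n)$) to show that $\bP(|\sigma_1|>N)$ is regularly varying of index $\alpha_\sigma:=(\alpha-1)/\alpha$ for $\alpha\in(1,2]$ and $\alpha_\sigma:=1/2$ for $\alpha>2$ (this is obtained by summing $u_\sigma=u_\tau^2$ over $n+m\le N$ and inverting via standard Tauberian arguments). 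A Fisher-type asymptotic then yields the pinning free energy $\tf^\sigma(\lambda)\asymp L^\sigma(1/\lambda)\,\lambda^{1/\alpha_\sigma}$ as $\lambda\searrow 0$, so that $\bE^{\otimes 2}[\exp(\mu|\sigma\cap A|)]\le \exp(N\tf^\sigma(\mu)+o(N))$. Substituting $\mu=2\lambda_\beta\asymp\beta^2$ produces the announced exponent $2\alpha/(\alpha-1)\vee 4$.

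Choosing now $N(\beta):=\lfloor \tf^\sigma(2\lambda_\beta)^{-1}\rfloor$ and $h(\beta):=C\,\tf^\sigma(2\lambda_\beta)$ for a sufficiently large constant $C$, both factors above are $O(1)$, so the variance-ratio is bounded by some finite $K$ independent of $\beta$. Paley-Zygmund then yields $\bbP(Z^{f,\beta,h_c^a(\beta)+h(\beta)}_{N(\beta),M,\omega}\ge \tfrac12\bbE[Z^{f,\beta,h_c^a(\beta)+h(\beta)}_{N(\beta),M,\omega}])\ge 1/(4K)$. From here I would mirror the end of the proof of Theorem~\ref{thm:irrel} verbatim (from \eqref{eq:binfz} to \eqref{eq:irf}): pass from the free to the constrained partition function via Lemma~\ref{th:zfc} at a cost of $O(\log N)$ additive terms, use the deterministic lower bound $Z^{c,\beta,h_c^a(\beta)+h}_{N,M,\omega}\ge K(N+M)e^{\beta\omega_{N,M}-\log Q(\beta)}$ on the complementary event to control $\bbE\log Z^c$ from below, and apply the super-additive representation \eqref{eq:existence+.2} to deduce $\tf_\gamma(\beta,h_c^a(\beta)+h(\beta))>0$. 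This amounts exactly to the claimed inequality $h_c(\beta)-h_c^a(\beta)\le \tilde L(1/\beta)\,\beta^{2\alpha/(\alpha-1)\vee 4}$ after absorbing the logarithmic correction into the slowly varying factor $\tilde L$.

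The main technical obstacle is the middle step: rigorously identifying the return exponent $\alpha_\sigma$ of the intersection renewal and the associated slowly varying corrections. This requires uniform bivariate local-limit estimates on $\bP((n,m)\in\tau)$ both along and away from the diagonal, together with a Tauberian inversion from the renewal mass $u_\sigma=u_\tau^2$ back to the step tail $\bP(|\sigma_1|>N)$. A secondary but routine annoyance is that Paley-Zygmund produces only a positive-probability (not almost sure) event, which forces the two-regime splitting and the trivial fallback bound on the complementary event used in the proof of Theorem~\ref{thm:irrel}.
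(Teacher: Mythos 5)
Your plan follows essentially the same strategy as the paper: both fix a scale $N(\beta)$ at which the second moment of the free partition function stays bounded, apply Paley--Zygmund, and then feed the resulting positive-probability lower bound through the machinery of Theorem~\ref{thm:irrel}. Your $N(\beta)=\lfloor\tf^\sigma(2\lambda_\beta)^{-1}\rfloor$ and the paper's $N_\beta$ are the same length scale expressed in two languages: $\tf^\sigma(\lambda)^{-1}$ is, by Fisher's asymptotic, the $N$ at which $\lambda$ starts to compete with the return probability $\asymp 1/U_{N,N}$, which is precisely the threshold $c_{10}/U_{N,N}\ge 3t$ that the paper imposes.

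The differences are tactical. The paper computes the second moment directly at $h=h_c^a(\beta)$, where $\bbE[Z^f]=1$, and obtains the factor $e^{hN/2\mu}$ from a contact-count event $A_N$ via Proposition~\ref{th:unint1}; you compute the second moment at $h_c^a(\beta)+h$ and use Cauchy--Schwarz to decouple the $h$-tilt from the $\sigma$-coupling, which is a legitimate alternative and spares you the detour through \eqref{eq:binfz}. Two caveats. First, for $\alpha\in(1,2)$ the ratio $\bE[\exp(2h|\tau\cap A|)]/\bE[\exp(h|\tau\cap A|)]^2$ is of order $\exp(Ch^{\min(\alpha,2)}N)$, not $\exp(Ch^2N)$, because $\tf_\gamma(0,\cdot)$ is not twice differentiable at $0$ in that regime; with $h\asymp 1/N$ this is still $O(1)$, so your argument survives, but the stated justification (smoothness) is incorrect. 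Second, your identification of the $\sigma$-step tail is heavier than necessary: the paper avoids the Tauberian inversion and the Fisher asymptotic for the pinned $\sigma$ renewal by using the elementary geometric bound $\bP^{\otimes2}(\cH_{M,M}(\sigma)\ge k)\le\bP^{\otimes2}(\sigma_1\in(0,M]^2)^k$ together with Lemma~\ref{lem:tau1}, which gives $\bP^{\otimes2}(\underline{\sigma}_1>M)\asymp 1/U_{M,M}$ directly and is all that is needed.
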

%From Proposition~\ref{th:unint1} and the proof of Theorem~\ref{thm:irrel} (or rather \eqref{eq:AN})
\begin{proof}
As in the previous proof, it suffices to work with the case $\gamma = p/q\in \bbQ$ and $M=\lfloor \gamma N \rfloor$. We set 
\begin{equation}
N_\beta := \max \Big \lbrace N \in q\bbN \, : \,  \bbE \Big[ \Big( Z^{f,\beta,h_c^a(\beta)}_{N,M,\omega} \Big)^2 \Big]  \le 2 \Big \rbrace.
\end{equation}

Using Paley-Zygmund inequality, we therefore get that $\bbP(Z_{N,M,\go}^{f,\beta,h_c^a(\beta)} >1/2) \geq 1/8$ for any $N\leq N_{\gb}$, and we can then adapt the proof of Proposition \ref{th:unint1}.

Let us take $A_{N} \,:=\, \left\lbrace \vert \tau \cap \left((0,N] \times (0,M] \right) \vert \le  N/{2 \mu} \right\rbrace$. 
Since $\lim_{N \to \infty} \bP(A_N) =0$, and $\bbP(Z_{N,M,\go}^{f,\beta,h_c^a(\beta)} >1/2) >1/8$ for $N\leq N_{\gb}$, we find, exactly as in the proof of Proposition~\ref{th:unint1}, that there exists $N_0 \in \bbN$ such that for every $N_0 \le N \le N_\beta$ we have 
\begin{equation}
\bbP \Big( \bP^{f,\gb,h_c^{a}(\gb)}_{N,M,\omega} (A_N)\le \frac 12
\text{ and } Z^{f,\gb,h_c^{a}(\gb)}_{N,M,\omega} > \frac 12  \Big)  \, \ge\, \frac{1}{20} \,.
\end{equation}
Following the proof of Theorem \ref{thm:irrel} (see \eqref{eq:irf}), provided $N_{\gb} \geq N_0$, and since $N_{\gb} \in q\bbN$, we get that 
\begin{equation}
 \tf_{\gamma}(\gb,h) \geq   \big \{ c_6 (h- h_c^a(\beta))  - c_7  N_{\gb}^{-1} \log N_{\gb} \big \} \,.
 \end{equation} 
  
We therefore observe that if  $h- h_c^a(\beta) > c_7/c_6  N_{\gb}^{-1} \log N_{\gb}$ then  $\tf_\gamma(\beta,h) >0$. Hence we get that
\begin{equation}
\label{eq:hNbeta}
h_c^ q(\beta) - h_c ^a(\beta) \le \frac{c_7}{c_6}\, \cdot  \frac{\log N_\beta}{N_\beta} \,.
\end{equation}

It therefore boils down to estimating $N_\beta$, namely obtaining a lower bound.
Recall from \eqref{eq:Z2} that 
\begin{equation}
\bbE \Big[ \left( Z^{f,\beta,h_c^a(\beta)}_{N,M,\omega} \right)^2 \Big] = \bE^{\otimes 2} \Big[ \exp \Big(  (\log Q(2 \beta) - 2 \log Q(\beta)) \cH_{N,M}(\sigma) \Big)  \Big] \,,
\end{equation}
with $\cH_{N,M}(\sigma) = \sum_{n=1}^N \sum_{m=1}^M \mathrm{1}_{(n,m) \in \sigma}$, and $\sigma = \tau \cap \tau'$ the intersection renewal. Recall that   $\sigma$ is persisting for $\ga > 1$, see Proposition \ref{prop:transience}.

Note that for $\beta \le 1$, there exists $c_8$  such that 
$\log Q(2 \beta) - 2 \log Q(\beta) \le c_{8} \, \beta^ 2$, and that $\cH_{N,M} \leq \cH_{M,M}$ if $M\geq N$.
The question is therefore reduced to estimating $\bE^{\otimes 2} \left[ \exp (t \cH_{M,M}(\sigma)) \right]$, with $t=c_8 \beta^2$.
We have 
\begin{equation}
\begin{split}
\bE^{\otimes 2} \Big[ \exp (t \cH_{M,M}(\sigma)) \Big] &
= 1 + \sum_{k=1}^M \left( e^{tk} - e^{t (k-1)} \right) \bP^{\otimes 2} \left( \cH_{M,M}(\sigma) \ge k \right) \\
& \le 1+ (e^{t}-1) \sum_{k=1}^M e^{t k} \bP^{\otimes 2} \left( \cH_{M,M}(\sigma) \ge k \right) \,.
\end{split}
\end{equation}
In order to obtain an upper bound, we use the following fact
\begin{equation}
\bP^{\otimes 2} \left( \cH_{M,M}(\sigma) \ge k \right) = \bP^{\otimes 2} \left( \sigma_k \in (0,M]^2\right) \le 
 { \bP^{\otimes 2} \left( \sigma_1 \in (0,M]^2 \right) }^k \,.
\end{equation}
Then we get
\begin{equation}
\bE^{\otimes 2} \left[ \exp (t \cH_{M,M}(\sigma)) \right] \le 
 1 + (e^{t}-1) \sum_{k=1}^M \exp \left[ k \Big(t + \log \bP^{\otimes 2} \left( \sigma_1 \in (0,M]^2  \right) \Big)\right] \,.
\end{equation}
Let $\underline{\sigma}:= \sigma^{(1)} + \sigma^{(2)}$. An elementary observation is that $\bP^{\otimes 2} \left( \sigma_1 \notin (0,M]^2  \right) $ is of the same order as $\bP^{\otimes 2} \left( \underline{\sigma}_1 >M  \right) $: indeed, for every $M \in \bbN$ we have
\begin{equation}
\bP^{\otimes 2} \left( \underline{\sigma}_1 > 2 M \right) \, \le \, \bP^{\otimes 2} \left( \sigma_1 \notin (0,M]^2  \right) \, \le \, \bP^{\otimes 2} \left( \underline{\sigma}_1 >  M \right) \,.
\end{equation}
Therefore, using that $\log(1- x) \leq -x$ for $x\in [0,1]$, we get that
\begin{equation}
\log \bP^{\otimes 2} \left( \sigma_1 \in (0,M]^2 \right) \le   - \bP^{\otimes 2} \left( \underline{\sigma}_1 > 2M \right)  \leq  - c_{9} /U_{M,M} ,
\end{equation}
where we used Lemma~\ref{lem:tau1} to estimate $\bP^{\otimes 2} \left( \underline{\sigma}_1 > 2M \right)$ (provided that $M$ is large enough), with $U_{N,M} =\sum_{n=0}^N \sum_{m=0}^M \bP((n,m)\in\tau)^2$.
Since $M\leq \gamma N$ and $U_{N,N}$ is regularly varying, see Proposition \ref{prop:transience}, we get that
\begin{equation}
\bE^{\otimes 2} \left[ \exp (t \cH_{M,M}(\sigma)) \right] \le
 1 + (e^{t}-1) \sum_{k=1}^N \exp \Big(  k (t -  c_{10}/U_{N,N} ) \Big) \,.
\end{equation}
We therefore choose $N$ such that $ c_{10}/U_{N,N} \ge 3 t   = 3c_{8} \gb^2 $. By Proposition \ref{prop:transience}, for $\ga >1$, we can choose
\begin{equation}
N = N_{\gb}=
\tilde \psi (1/\gb) \gb^{- (\frac{2\ga}{\ga-1} \vee 4)}   \, ,
\end{equation}
for some slowly varying function $\tilde\psi(\cdot)$.
For this choice of $N$, we therefore get that
\begin{equation}
\bbE \left[ \left( Z^{f,\beta,h_c^a(\beta)}_{N,M,\omega} \right)^2 \right]  \le 1 + (e^{c_8 \beta^2} -1) \sum_{k=1}^N \exp(- 2 c_8 \beta^2 k ) \le 1 + \frac{e^{c_8 \gb^2} -1}{1- e^{-2 c_8 \gb^2}} \,,
\end{equation}
which is smaller than $2$ provided that $\gb$ is small enough.
It therefore implies that there exists some $\gb_1>0$ such that
\begin{equation}
\label{eq:Nbeta}
N_\beta \ge
\tilde \psi (1/\gb) \gb^{- (\frac{2\ga}{\ga-1} \vee 4)}  \qquad \text{for } \gb \leq \gb_1 \,.
\end{equation}

The proof is therefore complete by putting \eqref{eq:Nbeta} in \eqref{eq:hNbeta}.
\end{proof}

\section{Lower bound on the critical point shift}
\label{sec:fm}

From now on, $L_i(\cdot)$ will denote slowly varying functions and $C_i$ positive constants for $i=1,2,...$ Also, we sometimes treat certain large quantities as if they were integers, simply to avoid the integer-part notation; in all cases these can be treated as if the integer-part notation were in use.

Our proof is based on combining the fractional moment method and a change of measure argument, following the same strategy adopted  in \cite{cf:DGLT09}. Let
\begin{equation}
z_{n,m} :=  \exp \left( \beta \omega_{n,m} + h \right) \,.
\end{equation}

Choose $k \le N$ and $M $ such that $M \sim \gamma N $ and decompose the partition function \eqref{eq:pf} as follows, see Figure \ref{fig:br}:
\begin{equation}
\label{eq:dpf}
Z_{N,M,\omega}= Z_{N,M,\omega}^1 + Z_{N,M,\omega}^2 + Z_{N,M,\omega}^3   \,,
\end{equation}
with (recall the notation \eqref{eq:pp})
\[
\begin{split}
Z_{N,M,\omega}^1 = \sum_{n=k}^N \sum_{m=k}^M Z_{N-n,M-m,\omega} \sum_{i=0}^{k-1} \sum_{j=0}^{k-1} K(n-i+m-j) z_{N-i,M-j} Z_{(N-i,M-j),(N,M),\omega} \, ,\\
Z_{N,M,\omega}^2 = \sum_{n=1}^{k-1} \sum_{m=k}^M Z_{N-n,M-m,\omega} \sum_{i=0}^{n-1} \sum_{j=0}^{k-1} K(n-i+m-j) z_{N-i,M-j} Z_{(N-i,M-j),(N,M),\omega} \,,\\
Z_{N,M,\omega}^3= \sum_{n=k}^N \sum_{m=1}^{k-1} Z_{N-n,M-m,\omega} \sum_{i=0}^{k-1} \sum_{j=0}^{m-1} K(n-i+m-j) z_{N-i,M-j} Z_{(N-i,M-j),(N,M),\omega} \,.
\end{split}
\]
Note that $Z_{(N-i,M-j),(N,M),\omega}$ has the same law as $Z_{i,j,\omega}$ and that $Z_{N-n,M-m,\omega}$, $ z_{N-i,M-j}$ and $Z_{(N-i,M-j),(N,M),\omega}$ are independent for $i < n$ and $j <m$.

\begin{SCfigure}[50][htb]
 \centering
\includegraphics[scale=0.34]{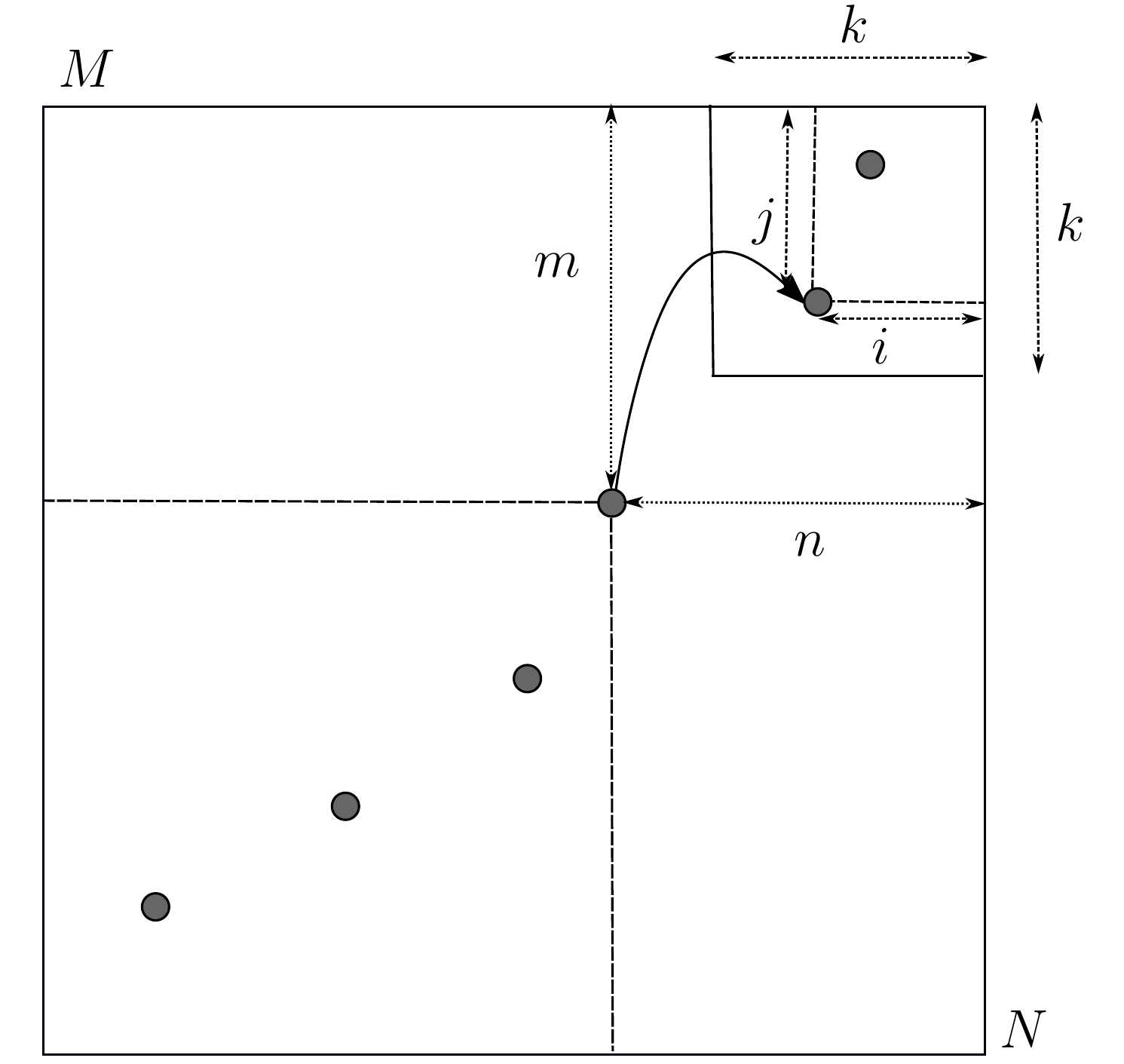}
\caption{\label{fig:br} Fixing a value $k$, the partition function is decomposed by summing over the values of the last renewal epoch outside the corner block $(N-k,N] \times (M-k,M]$, and the first one inside that block. We distinguish three cases: either the last renewal epoch is in $[0,N-k]\times [0,M-k]$ (which is the case represented in the figure, giving $Z_{N,M,\go}^1$), or it is in $(N-k,N]\times [0,M-k]$ ($Z_{N,M,\go}^2$) or in $[0,N-k]\times (M-k,M]$ ($Z_{N,M,\go}^3$). }
\end{SCfigure}

\medskip

Let $\delta \in (0,1)$ (that will be chosen close to $1$ later in the proof), and define
\begin{equation}
\cA_{N,M} := \bbE \Big[ {\left( Z_{N,M,\omega}  \right)}^{\delta} \Big] \qquad  \text{ for every } N,M \in \bbN^2, 
\end{equation}
with $\cA_{0,0}=1$, and $\cA_{i,0}= \cA_{0,i}=0$ for every $i\ge 1$. We apply the inequality ${ \left( \sum a_i \right) }^\delta \le \sum {a_i}^\delta$ (which holds for any finite and countable connection of positive real numbers) to the decomposition \eqref{eq:dpf} to get
\begin{equation}
\label{eq:A123}
\cA_{N,M} \le \cA_{N,M}^1 + \cA_{N,M}^2 + \cA_{N,M}^3 \,,
\end{equation}
where
\begin{equation}
\begin{split}
\cA_{N,M}^1 \le \bbE[z_{1,1}^\delta] \sum_{n=k}^N \sum_{m=k}^M \cA_{N-n,M-m} \sum_{i=0}^{k-1} \sum_{j=0}^{k-1} K(n-i+m-j)^\delta \cA_{i,j} \,.\\
\cA_{N,M}^2 \le \bbE[z_{1,1}^\delta] \sum_{n=1}^{k-1} \sum_{m=k}^M \cA_{N-n,M-m} \sum_{i=0}^{n-1} \sum_{j=0}^{k-1} K(n-i+m-j)^\delta \cA_{i,j} \,.\\
\cA_{N,M}^3 \le \bbE[z_{1,1}^\delta] \sum_{n=k}^N \sum_{m=1}^{k-1} \cA_{N-n,M-m} \sum_{i=0}^{k-1} \sum_{j=0}^{m-1} K(n-i+m-j)^\delta \cA_{i,j} \,.
\end{split}
\end{equation}

The key idea of the proof is to the following proposition.
\begin{proposition}
\label{th:rho}
For fixed $\beta$ and $h$, if there exist $k \in \bbN$ such that  $\rho_1 + \rho_2 + \rho_3 \le 1$ with
\begin{multline}
\label{eq:rho}
\rho_1 + \rho_2 + \rho_3 := \bbE[z_{1,1}^\delta] \left(  \sum_{n=k}^\infty \sum_{m=k}^\infty \sum_{i=0}^{k-1} \sum_{j=0}^{k-1}
+  \sum_{n=1}^{k-1} \sum_{m=k}^\infty \sum_{i=0}^{n-1} \sum_{j=0}^{k-1}
+  \sum_{n=k}^\infty \sum_{m=1}^{k-1} \sum_{i=0}^{k-1} \sum_{j=0}^{m-1}
 \right)  \\
 K(n-i+m-j)^\delta \cA_{i,j} \,,
\end{multline}
then $\tf_{\gamma} (\beta,h)=0$.
\end{proposition}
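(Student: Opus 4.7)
The proof follows the standard fractional moment scheme: use the hypothesis $\rho_1+\rho_2+\rho_3 \leq 1$ to bound $\cA_{N,M}$ uniformly in $(N,M)$, and then conclude via Jensen's inequality.

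First I would fix
\[
C:=\max\bigl\{1,\;\sup_{(N,M):\,N\wedge M<k}\cA_{N,M}\bigr\}
\]
and verify that $C<\infty$. For each fixed small coordinate, say $M<k$, the constrained partition function $Z_{N,M,\omega}$ is a (polynomially long) sum of non-negative contributions of the form $\prod K(\cdot)\prod z_{n,m}$; the sub-additivity $(a+b)^\delta\leq a^\delta+b^\delta$, the finiteness of $Q(\delta\beta)$, and the polynomial decay of $K$ in \eqref{def:Kn} then yield $\sup_N \cA_{N,M}<\infty$, and symmetrically in $N$. Since only finitely many small $N$ and $M$ are involved, the supremum is finite.

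I would then prove $\cA_{N,M}\leq C$ for every $(N,M)\in\bbN^2$ by strong induction on $N+M$. The base cases $N\wedge M<k$ hold by construction. For $N,M\geq k$, each summand of $\cA^1_{N,M},\cA^2_{N,M},\cA^3_{N,M}$ has $n\geq 1$ and $m\geq 1$, so $(N-n)+(M-m)<N+M$ and the inductive hypothesis gives $\cA_{N-n,M-m}\leq C$. Replacing these factors by $C$ and extending the outer $(n,m)$-sums to all of $\bbN^2$ (a legitimate upper bound since all summands are non-negative), the bound \eqref{eq:A123} becomes $\cA_{N,M}\leq C(\rho_1+\rho_2+\rho_3)\leq C$, closing the induction.

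From $\sup_{N,M}\cA_{N,M}\leq C$, Jensen's inequality applied to $\log$ yields $\bbE[\log Z_{N,M,\omega}]=\delta^{-1}\bbE[\log Z_{N,M,\omega}^\delta]\leq \delta^{-1}\log \cA_{N,M} \leq \delta^{-1}\log C$, so Theorem~\ref{th:fq} gives $\tf_\gamma(\beta,h)\leq 0$. The matching lower bound $\tf_\gamma(\beta,h)\geq 0$ follows by retaining in $Z_{N,M,\omega}$ only the single-jump trajectory $(0,0)\to(N,M)$: this gives $Z_{N,M,\omega}\geq K(N+M)\exp(\beta\omega_{N,M}+h)$, hence $N^{-1}\log Z_{N,M,\omega}\to 0$ almost surely as $N\to\infty$ (Borel--Cantelli and the exponential moments of $\omega$ absorb the $\omega_{N,M}$ term). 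The mildly delicate point is the finiteness of $C$, but this is essentially bookkeeping; the remainder of the argument is a direct mechanical implementation of the fractional moment method in the bivariate setting.
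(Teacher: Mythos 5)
Your proof follows essentially the same route as the paper's: establish a uniform bound on $\cA_{N,M}$ by induction using $\rho_1+\rho_2+\rho_3\le 1$, then conclude via Jensen's inequality. The only minor difference is in the finiteness of the base-case constant: the paper gets it in one line via Jensen applied to the annealed partition function, $\cA_{i,j}\le(\bbE Z_{i,j})^\delta\le e^{\delta h\min(i,j)}$ (there are at most $\min(i,j)$ renewals), whereas you argue via sub-additivity of $x\mapsto x^\delta$ and summability of $K^\delta$ — both correct, the paper's being slightly slicker; you also make the $\tf_\gamma\geq 0$ step explicit, which the paper leaves implicit.
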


\begin{proof}
Define $\overline{A} := \max \big\{  \{ \cA_{i,j} ,\cA_{i,s}, \cA_{t,j} \} , \, 1 \le i,j \le k-1 , \, s,t \ge k \big\}$.
Note that by Jensen's inequality we have $\cA_{i,j} \leq \bbE[Z_{i,j}]^{\gd} \leq \exp(\gd  h \min\{i,j\})$, since there are at most $\min\{ i,j \}$ renewals in the region $\{1,\ldots,i\} \times \{1,\ldots,j\}$: we get that $\overline{A} \leq e^{h k}$.
Then from \eqref{eq:A123} and the fact that $\rho_1+\rho_2+\rho_3  \le 1$, we deduce (by induction) that $\cA_{N,M} \le \overline{A}\leq e^{k,h}$ for all $N,M$. Then by Jensen's inequality
\begin{equation}
\tf_\gamma^q(\beta ,h) = \limtwo{N \to \infty}{M/N\to\gamma} \frac{1}{\delta N} \bbE \log {(Z_{N,M,\omega})}^{\delta} \le \limtwo{N \to \infty}{M/N\to\gamma} \frac{1}{\delta N}  \log \cA_{N,M} = 0 \,.
\end{equation}
\end{proof}

Our aim is therefore to prove that for $h=h_c^a(\gb) + \gD_{\gb}^{\gep}$ (where $\gD_\gb^{\gep}$ is defined in Theorem~\ref{thm:rel}) we have that $\tf_1^{q}(\gb,h) =0$ (provided that $\gb$ is small enough), by showing that $\rho_1,\rho_2,\rho_3$ are smaller than $1/3$ for such $h$, for some $k=k_{\gb}$ wisely chosen.
For the choice of $k$, we pick $k$ proportional to the correlation length of the annealed system, that is $k \propto \tf(0,\gD_{\gb}^{\gep})^{-1}$, and in view of Theorem \ref{th:beta0} (here $\ga>1$), we can take
\begin{equation}
\label{def:k}
k = k_{\gb} = \frac{1}{\gD_{\gb}^{\gep}}
= \begin{cases}
\gb^{ - (1+\gep)\frac{2\ga}{\ga-1} } & \quad \text{ if } \ga\in(1,2]\, , \\
\gb^{-4} |\log \gb |^{6}  & \quad \text{ if } \ga>2.
\end{cases}
\end{equation}

\medskip
Note that, in view of \eqref{eq:rho} and \eqref{def:Kn}, provided that $\gd$ is close to $1$ so that $(2+\ga)\gd >2$, we have
\begin{equation}
\label{eq:rho1}
\rho_1 \le \sum_{i=0}^{k-1} \sum_{j=0}^{k-1} \frac{L_1(2 k -i -j)}{(2k -i -j)^{(2+\ga) \delta - 2}} \cA_{i,j} \,,
\end{equation}
and 
\begin{equation}
\label{eq:rho2}
\rho_2 \le  \sum_{i=0}^{k-1} \sum_{j=0}^{k-1}  \frac{L_2(k-j)}{(k-j)^{(2+\ga)\delta -2}}  \cA_{i,j} \,.
\end{equation}
The $\rho_3$ case being symmetric to $\rho_2$, we can therefore focus on $\rho_1$ and $\rho_2$.

\subsection{Finite-volume fractional moment estimate}
\label{sec:com}
To estimate \eqref{eq:rho1} and \eqref{eq:rho2}, we need a good control over the fractional moment $\cA_{i,j}$ for any $i,j\leq k$, and we provide estimates in this section.

First of all, using Jensen's inequality, we have that $\cA_{i,j} \le { (\bbE Z_{i,j,\omega})}^\delta$. Moreover, because $h=h_c^a(\gb)+\gD_{\gb}^\gep$,  we get that for any $i,j\leq k$
\[
\bbE Z_{i,j,\omega}  = \bE \left[ \exp \left(  \gD_{\gb}^{\gep} \vert \tau \cap \lbrace 1,...,i \rbrace \times \lbrace 1,..., j \rbrace \vert \right) \mathbf{1}_{(i,j)\in \tau} \right] \le e \, \bP((i,j) \in \tau) \,,
\]
since $\vert \tau \cap \lbrace 1,...,i \rbrace \times \lbrace 1,..., j \rbrace \vert \leq k$ and thanks to of our choice of $k = (\gD_{\gb}^{\gep})^{-1}$.
We therefore get that,
\begin{equation}
\label{eq:ineqA}
 \cA_{i,j} \leq e^{\gd} \bP((i,j) \in \tau)^{\gd} ,
\end{equation}
and $\bP((i,j)\in\tau)$ can be estimated thanks to Theorems \ref{th:re01}-\ref{th:reD}.

However, this estimate is rather rough, especially when $i,j$ is close to the diagonal (that is for example $i\leq j \leq i + a_i$ where ${(a_n)}_{n \ge 0}$ is the scaling sequence for $\tau_n$, defined in Section \ref{sec:defan}). We therefore prove the following proposition:
\begin{proposition}
\label{prop:chgmeasure}
Let $h=h_c^a(\gb)+ \gD_{\gb}^\gep$ and $k=(\gD_\gb^{\gep})^{-1}$. Then, define also
\begin{equation}
\label{def:ell}
 \ell_i := 
 \begin{cases}
  i^{(1+\gep^3)/\ga} & \quad \text{if } \ga\in(1,2] ,\\
 C \sqrt{i\log i} & \quad \text{if } \ga>2,
 \end{cases}
\end{equation}
so in any case $\ell_i \gg a_i$.
There exists some $k_0$ such that, provided that $k\geq k_0$ then for all $\sqrt{k}\leq i\leq k$ and $i\leq j\leq i+\ell_i$ we have that
\begin{equation}
\label{boundAij}
\cA_{i,j} \leq  L_{10}(i)\Big(  i^{\gd(1-\frac{1}{\ga\wedge2})} (\ell_i)^{-\gd \ga} +  \frac{i^{-\gd(1+\ga)} \ell_i^{ \gd}}{\gb^{2 \gd}} +  i^{-\frac{\gd}{\ga\wedge 2}} e^{- c (\gb^2 i /\ell_i)^{1/2}} \Big) .
\end{equation}
\end{proposition}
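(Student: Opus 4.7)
The naive Jensen bound $\cA_{i,j}\leq e^\gd\bP((i,j)\in\tau)^\gd$ coming from \eqref{eq:ineqA} is too crude when $i\leq j\leq i+\ell_i$, because the trajectories contributing to $Z_{i,j,\omega}$ then live essentially in a diagonal strip, and the disorder in that strip must be reduced by a change of measure. My plan is therefore to combine two ideas: first, split $Z_{i,j,\omega}$ according to whether the renewal stays in the strip
\[
S:=\{(n,m)\in[1,i]\times[1,j]\,:\,|n-m|\leq\ell_i\},
\]
of cardinality $|S|\asymp i\ell_i$, or not; second, on the in-strip part perform a change of measure on $\{\omega_{n,m}\}_{(n,m)\in S}$. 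Writing $Z_{i,j,\omega}=Z^{\rm in}_{i,j,\omega}+Z^{\rm out}_{i,j,\omega}$ accordingly and using that $\gd\in(0,1)$, subadditivity gives
\[
\cA_{i,j}\leq\bbE\bigl[(Z^{\rm in})^\gd\bigr]+\bbE\bigl[(Z^{\rm out})^\gd\bigr].
\]
The goal is that the out-of-strip term produces the summand $A$ of \eqref{boundAij} while the in-strip term produces $B$ and $C$.

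For the out-of-strip contribution I apply Jensen: $\bbE[(Z^{\rm out})^\gd]\leq \bbE[Z^{\rm out}]^\gd$. Since $\gD_\gb^\gep|\tau|\leq 1$ (because $|\tau|\leq k=1/\gD_\gb^\gep$), computing the annealed partition function yields $\bbE[Z^{\rm out}]\leq e\,\bP(\tau\text{ exits }S,(i,j)\in\tau)$. Decomposing this probability on the position of the renewal jump responsible for the excursion out of $S$, combining $K(n)\sim L(n)/n^{2+\ga}$ with the bivariate renewal mass estimates of Theorems~\ref{th:re01}--\ref{th:reD}, and convolving, one obtains a bound of the form $CL(i)\,i\,\ell_i^{-\ga}\bP((i,j)\in\tau)$; combined with the on-diagonal bound $\bP((i,j)\in\tau)\leq L(i)i^{-1/(\ga\wedge 2)}$ this yields, after raising to the $\gd$-th power, the first summand~$A$.

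For the in-strip part, I introduce a probability measure $\widetilde\bbP$ with density $g=d\widetilde\bbP/d\bbP$ that effectively shifts the mean of $\omega_{n,m}$ by $-\lambda/\sqrt{|S|}$ on $S$ (Gaussian-type tilt $g(\omega)=\exp(-\lambda\sum_{(n,m)\in S}\omega_{n,m}/\sqrt{|S|}-\lambda^2/2)$ in the Gaussian case, analogous finite-range tilt in general using \eqref{eq:defQ}). Hölder's inequality with conjugate exponents $1/\gd$ and $1/(1-\gd)$ yields
\[
\bbE\bigl[(Z^{\rm in})^\gd\bigr]\leq \widetilde\bbE\bigl[g^{-1/(1-\gd)}\bigr]^{1-\gd}\,\widetilde\bbE[Z^{\rm in}]^\gd,
\]
and the cost factor is bounded by a constant for bounded $\lambda$. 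Under $\widetilde\bbP$ the effective reward inside $S$ becomes $h-\beta\lambda/\sqrt{|S|}$, so $\widetilde\bbE[Z^{\rm in}]$ is dominated by $\bE\bigl[\exp\bigl((\gD_\gb^\gep-\beta\lambda/\sqrt{i\ell_i})|\tau\cap[1,i]\times[1,j]|\bigr)\mathbf{1}_{\tau\subset S}\mathbf{1}_{(i,j)\in\tau}\bigr]$. I pick $\lambda$ so that $\beta\lambda/\sqrt{i\ell_i}\geq 2\gD_\gb^\gep$ (this is compatible with a bounded Hölder cost precisely because $i\geq\sqrt k$), turning the exponent into $-c\beta\lambda/\sqrt{i\ell_i}$ per contact. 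Splitting this expectation according to whether $|\tau\cap[1,i]\times[1,j]|$ is $\geq\kappa i$ or $<\kappa i$ then produces the two remaining summands: on the many-contacts event one obtains a saving $\exp(-c(\beta^2 i/\ell_i)^{1/2})$ multiplying $\bP((i,j)\in\tau)\leq L(i)i^{-1/(\ga\wedge 2)}$, i.e.\ summand~$C$; on the few-contacts event the change of measure is ineffective, but the event itself is rare, and the off-diagonal bivariate renewal estimates of Appendix~\ref{app:bivrenewal} bound its contribution by an expression that, once the Hölder-cost slack of order $1/\beta^2$ is factored in, is of the form $\ell_i/(\beta^2 i^{1+\ga})$, i.e.\ summand~$B$.

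The main technical obstacle is the last point: it requires \emph{off-diagonal} control on the bivariate renewal mass $\bP((i,j)\in\tau)$ for $|j-i|$ as large as $\ell_i\gg a_i$, outside the local-limit regime, both when bounding the exit-of-strip probability and when bounding the few-contacts contribution. A secondary delicate point is the calibration of $\lambda$: it must be large enough for $\beta\lambda/\sqrt{i\ell_i}$ to dominate $\gD_\gb^\gep$ (this is what forces the lower bound $i\geq\sqrt k$ in the hypothesis) and small enough for the Hölder cost to stay bounded.
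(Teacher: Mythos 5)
Your overall strategy matches the paper's: tilt the disorder in a diagonal strip of width $\asymp\ell_i$ containing $(i,j)$, pay a bounded Hölder cost because $\lambda^2\#J_{i,j}\asymp 1$, and then the tilted annealed partition function is controlled by decomposing according to whether the renewal path exits the strip (term $A$) and, if not, according to the number of renewal epochs it uses to reach $(i,j)$ (terms $B$ and $C$). The organizational difference — you split $Z=Z^{\rm in}+Z^{\rm out}$ at the level of the partition function and Jensen the ``out'' piece separately, whereas the paper applies the change of measure to the whole $Z$ and only splits inside the annealed $\tau$-expectation — is genuine but harmless: both routes work, yours costs an extra fractional-subadditivity step. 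There are, however, two places where your account is off.

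First, you try to absorb $\gD_\gb^\gep$ into the tilt by demanding $\beta\lambda/\sqrt{i\ell_i}\geq 2\gD_\gb^\gep$, and you flag the verification that this is consistent with a bounded Hölder cost as a ``secondary delicate point''. It is not merely delicate — in the case $\ga\in(1,2]$ at $i=k$ it is marginal, and one needs to track $\gep$-dependent exponents to confirm it holds. The paper avoids this entirely: since $|\tau\cap[1,i]\times[1,j]|\leq k$ and $\gD_\gb^\gep=1/k$, the factor $e^{\gD_\gb^\gep|\tau|}\leq e$ can be pulled out before any tilting, so $\lambda$ only has to satisfy the much softer constraint $\lambda\leq\beta$ (used in the Taylor expansion $Q(\beta-\lambda)/(Q(\beta)Q(-\lambda))\leq e^{-c\beta\lambda}$), and this follows immediately from $i\geq\sqrt k$ and $k\geq\beta^{-4}$.

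Second, and this is the genuine gap, your explanation of how the summand $B=i^{-\gd(1+\ga)}\ell_i^\gd/\beta^{2\gd}$ arises is wrong in two ways. (i) The factor $1/\beta^2$ is \emph{not} Hölder-cost slack — the Hölder cost is $e^{c\lambda^2\#J_{i,j}}\leq e^{2c}$, a bounded constant by the choice $\lambda=(i\ell_i)^{-1/2}$. The $1/\beta^2=1/u_i^2$ (with $u_i\asymp\beta\lambda$) comes from the Laplace-type sum $\sum_{k\leq i/2\mu}k\,e^{-u_ik}\asymp u_i^{-2}$ in Lemma~\ref{lem:homogen}. (ii) The technical ingredient you need is not an off-diagonal renewal-mass estimate for $\bP((i,j)\in\tau)$ with $|j-i|\gg a_i$; those are what drives the exit-of-strip bound (Lemma~\ref{lem:notinJ}). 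What controls the few-contacts regime is a \emph{local large-deviation} bound for the $k$-th renewal epoch in the big-jump regime, namely $\bP(\tau_k=(i,j))\leq C\,k\,K(i+j)$ for $k\leq i/2\mu$ (Theorem~\ref{thm:locallargedev}). Without that input, the few-contacts contribution cannot be identified with term $B$. Your sketch leaves exactly this piece unsupplied and points to the wrong appendix estimate, so the proof of the middle summand is not actually in place.
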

This result is the core of the proof, and is based on a change of measure argument.  
With this result in hand, we are able to show that $\rho_1$ and $\rho_2$ are small,  for $\ga>2$ in Section \ref{sec:ga2} and for $\ga\in(1,2]$ in Section \ref{sec:ga1}. 
Let us apply this proposition to get bounds on $\cA_{i,j}$ in the different cases.

\smallskip

{\bf Case $\ga>2$.} We get that uniformly for $k/2 \leq i\leq k$ and $i\leq j\leq i+ C' \sqrt{k\log k}$
\begin{equation}
\label{boundA2}
\begin{split}
\cA_{i,j} & \leq L_{11}(k) k^{\frac{\gd}{2} (1-\ga)} + L_{12}(k) \frac{k^{-\gd (1+\ga)}  (k \log k)^{\gd/2}}{\gb^{2 \gd}}  + L_{13}(k) k^{- \gd/2} e^{- c (\gb^4 k /\log k)^{1/4}} \\
& \leq L_{14}(k) \Big( k^{\frac{\gd}{2} (1-\ga)} + k^{-\gd \ga} +  k^{- \gd/2} e^{- c(\log k)^{5/4} }\Big) \leq  L_{15}(k) k^{-\frac{\gd}{2} (\ga-1)} \, ,
\end{split}
\end{equation}
where the choice \eqref{def:k} of $k=(\gD_{\gb}^{\gep})^{-1} = \gb^{-4} |\log \gb|^{6}$ is crucial, to get that $\gb^4 k /\log k  \geq c (\log k)^5$. For the last inequality, we observe that the first term dominates.

\smallskip
{\bf Case $\ga\in(1,2]$.} We use also the choice \eqref{def:k} of $k=(\gD_{\gb}^{\gep})^{-1} = \gb^{-(1+\gep) 2\ga /(\ga-1)} $ to get that uniformly for $k^{1-\gep^2} \leq i\leq k$, we have provided that $\gep$ is small enough
\begin{equation}
\gb^2 i /\ell_i = k^{- \frac{\ga-1}{(1+\gep)\ga}} i^{\frac{\ga -1-\gep^3}{\ga}} \geq k^{ \frac{1}{(1+\gep) \ga} (\gep (\ga-1) +O(\gep^2))}\geq k^{\gep (\ga-1)/2\ga} \, .
\end{equation}
Therefore, using also that for $i\leq k$, $\gb^{-2} \ell_i \leq k^{\frac{\ga-1}{(1+\gep) \ga}} k^{(1+\gep^3)/\ga} \leq k$ (if $\gep$ has been fixed small enough), we have that uniformly for $k^{1-\gep^2} \leq i\leq k$ and $i\leq j\leq i+ \ell_i$,
\begin{equation}
\label{boundA1}
\begin{split}
\cA_{i,j}  &\leq L_{10}(i) i^{-\gd(\frac1\ga +\gep^3)} + L_{10}(i) i^{-\gd \ga} k^{\gd}  + L_{10}(i) i^{-\gd/\ga} e^{-c k^{\gep (\ga-1)/4\ga }} \\
 & \leq L_{16}(k) k^{-\gd(1-\gep^2)(\frac1\ga +\gep^3) } \leq L_{16}(k) k^{-\frac\gd\ga (1  + \gep^2 /2) }
\end{split}
\end{equation}
where again, for the second to last inequality, we observe that the first term dominates, since $1/\ga >\ga$ and $\gep$ can be fixed arbitrarily small.

\begin{proof}[Proof of Proposition \ref{prop:chgmeasure}]
The idea is to use a change of measure argument.
We define a strip $J_{i,j}$ in which we will tilt the environment by some quantity $\lambda$ (to be chosen wisely): 
\begin{equation}
J_{i,j} := \Big\{ (n,m) \in  \llbracket 0, i \rrbracket \times \llbracket 0, j \rrbracket  \, ;\,     |n-m| \leq 2 \ell_i \Big\}  \,,
\end{equation}
and hence $\# J_{i,j} \leq 2 i \ell_i $. The width $2\ell_i$ of the strip is chosen because of the scaling of the bivariate renewal: it is very unlikely that the renewal deviates from the diagonal by more than $\ell_i$, see Theorem \ref{th:reD}.

Now, for $\lambda\in\bbR$ and $i,j\in\bbN$, we define a new probability measure $\bbP_{i,j,\lambda}$, under which the $\omega_{n,m}$ are still independent variables, but tilted by $\lambda$ in the strip $J_{i,j}$:
%$ \bbE_{i,j,\lambda} = - \lambda \mathbf{1}_{(n,m) \in J_{i,j}}$. This measure is absolutely continuous with respect to $\bbP$,
\begin{equation}
\label{eq:newmeasure}
\frac{\mathrm{d} \bbP_{i,j,\lambda}}{\mathrm{d} \bbP} (\omega) = \frac{1}{Q(-\lambda)^{ \# J_{i,j}}} \exp \Big( - \lambda \sum_{(n,m) \in J_{i,j}} \omega_{n,m} \Big) \,,
\end{equation}
where $Q(\cdot)$ is defined in \eqref{eq:defQ}. Observe now that by H\"older inequality

\begin{equation}
\label{eq:H1}
\begin{split}
\cA_{i,j} & =
\bbE_{i,j,\lambda} \Big[ {(Z_{i,j,\omega})}^\delta \frac{\mathrm{d}\bbP}{\mathrm{d}\bbP_{i,j,\lambda}} (\omega) \Big]  \le \bbE_{i,j,\lambda}\big[  Z_{i,j,\omega} \big]^{\gd}\,  
  \bbE_{i,j,\lambda} \bigg[ {\bigg(\frac {\mathrm{d}\bbP}{\mathrm{d}\bbP_{i,j,\lambda}}(\omega)  \bigg) }^{1/(1-\delta)}  \bigg] ^{1-\delta}   \,.
\end{split}
\end{equation}
The second term in the right-hand side of \eqref{eq:H1} is equal to
\begin{equation}
\label{eq:CM1}
{\bbE_{i,j,\lambda} \bigg[ {\bigg(\frac {\mathrm{d}\bbP}{\mathrm{d}\bbP_{i,j,\lambda}}(\omega)  \bigg) }^{1/(1-\delta)}  \bigg] }^{1-\delta}  
= { \left( Q(-\lambda)^{\delta} Q (\lambda \delta /(1-\delta) )^{1-\delta} \right)}^{\# J_{i,j}} \,.
\end{equation}
Observe that there exists $c_{11} >0$ such that $0 \le \log Q(x) \le c_{11} x^2$ for $\vert x \vert \le 1$.
% with $c_1 := (1/2) \max_{ x \in [-1,1] } \left\lbrace \mathrm{d^2} \left( \log Q(x)\right)/\mathrm{dx^2} \right\rbrace$.
Therefore for $\vert \lambda \vert \le \min (1,(1-\delta)/\delta)$ and by \eqref{eq:H1} and \eqref{eq:CM1}, we get
\begin{equation}
\cA_{i,j}\le \bbE_{i,j,\lambda}\big[  Z_{i,j,\omega} \big]^{\gd}  \exp \Big( c_{11} \Big( \frac{\delta(1+\gd)}{1-\delta} \Big) \lambda^2 \# J_{i,j} \Big)\,.
\end{equation}
Now, we choose $\lambda:= ( i\ell_i)^{-1/2}$, so that $\lambda^2 \# J_{i,j} \leq 2$, and
\begin{equation}
\label{eq:afterHolder}
\cA_{i,j} \leq e^{4c_{11}/(1-\gd)} \bbE_{i,j,\lambda}\big[  Z_{i,j,\omega} \big]^{\gd},
\end{equation}
so that we are left with estimating $\bbE_{i,j,\lambda}\big[  Z_{i,j,\omega}) \big]$ for $\lambda:= (i\ell_i)^{-1/2}$.

\medskip
%
%By Section~\ref{sec:com}, choosing $\lambda = 1/\sqrt{\# J_{i,j}} \le \min (1,(1-\delta)/\delta)$ (for $a$ small),we get
%\begin{equation}
%\cA_{i,j} \le { \left[ \bbE_{i,j,1/\sqrt{\# J_{i,j}}} (Z_{i,j,\omega}) \right] }^{\delta} \exp(c_1 \delta/(1-\delta)) \,.
%\end{equation}
%It suffices then to prove that $\bbE_{i,j,1/\sqrt{\# J_{i,j}}} (Z_{i,j,\omega})$ is small.
Recall \eqref{eq:hann} and the definition \eqref{eq:newmeasure} of $\bbP_{i,j,\gl}$. Using that $\bbE_{i,j,\gl}[e^{\gb \go_{n,m}}]$ equals $Q(\gb) =e^{- h_c^a(\gb)}$ if $(n,m) \notin J_{i,j}$ and $Q(\gb-\gl)/Q(-\gl)$ if $(n,m)\in J_{i,j}$, we have that for every $\beta$, $h$, $\lambda$ and $(i,j)$
\begin{equation}
\label{eq:Q1}
\begin{split}
\bbE_{i,j,\lambda} \big[ Z_{i,j,\omega} \big] & = \bE \bigg[  e^{(h-h_c^a(\gb) ) \vert \tau\cap \llbracket 0, i \rrbracket \times \llbracket 0, j \rrbracket \vert } 
      { \Big(  \frac{Q(\beta-\lambda)}{Q(\beta) Q(-\lambda)} \Big)}^{ \vert \tau \cap J_{i,j}  \vert } \mathbf{1}_{(i,j) \in \tau} \bigg] \\
      & \leq e\,  \bE \bigg[ 
      { \Big(  \frac{Q(\beta-\lambda)}{Q(\beta) Q(-\lambda)} \Big)}^{ \vert \tau \cap J_{i,j}  \vert } \mathbf{1}_{(i,j) \in \tau} \bigg],
\end{split}
\end{equation}
where we used that $\vert \tau\cap \llbracket 0, i \rrbracket \times \llbracket 0, j \rrbracket \vert \leq k$ and $h-h_c^a(\gb) = \gD_{\gb}^{\gep} = k^{-1}$.

Now, observe that $ \frac{Q(\beta-\lambda)}{Q(\beta) Q(-\lambda)} = 1 - \gl \gb +o(\gl^2 + \gb^2)$ as $\gl,\gb\downarrow 0$.
Here, because of our choice \eqref{def:k} of $k=(\gD_{\gb}^{\gep})^{-1}$, we have that $k\geq \gb^{-4}$. Since we are considering $i\geq \sqrt{k}$, and using that $\ell_i \geq \sqrt{i}$, we have that $\gl:= (i\ell_i)^{-1/2} \leq i^{-3/4} \leq k^{-3/8}$, and hence we have that $\gl \leq \gb^{3/2}\leq \gb$.
Therefore, there exists a constant $c_{12}>0$ such that provided that $\gb$ is small enough (or $k$ is large enough) we have
\begin{equation}
\label{eq:Q2}
\frac{Q(\beta-\lambda)}{Q(\beta) Q(-\lambda)} \le \exp(- c_{12} \beta \lambda ) \,,
\end{equation}
and we end up with
\begin{align}
\label{eq:max1}
e^{-1}\bbE_{i,j,\lambda} &\big[ Z_{i,j,\omega} \big]  \leq  \bE \left[ e^{ -c_{12} \gb \lambda \vert \tau \cap J_{i,j}  \vert } \mathbf{1}_{(i,j) \in \tau} \right] \notag \\
      & \leq  \bP\left(  \exists s , \tau_s \notin J_{i,j} \, , \, (i,j)\in\tau\right) + \bE\left[ e^{- c_{12} \gl \gb \vert \tau \cap \lbrace 1,...,i \rbrace \times \lbrace 1,...,j \rbrace \vert } \ind_{(i,j)\in\tau} \right] ,
\end{align}
where in the last term we dropped the indicator function that all renewals occur in the strip $J_{i,j}$.
We now estimate these two terms separately.

\begin{lemma}
\label{lem:notinJ}
There exists a slowly varying function $L_{4}$ such that, for every $1\leq i\leq j\leq i+\ell_i$ we have
\begin{equation}
\label{eq:notinJ}
 \bP\left(  \exists s , \tau_s \notin J_{i,j} \, , \, (i,j)\in\tau\right) \leq  
 L_{4}(i) \, i^{1-1/\ga \wedge 2} \, (\ell_i)^{- \ga} \,.
\end{equation}
\end{lemma}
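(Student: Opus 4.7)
My proof plan is to combine a union bound with the renewal property of $\tau$, and then to feed in the off-diagonal bivariate renewal mass estimates from Appendix~\ref{app:bivrenewal}. On the event $\{(i,j)\in\tau\}$, the renewal epochs of $\tau$ that precede $(i,j)$ all lie in $[0,i]\times[0,j]$; the event in question therefore amounts to the existence of some intermediate epoch $(n,m)$ lying in $[0,i]\times[0,j]\setminus J_{i,j}$, i.e., satisfying $|n-m|>2\ell_i$. Performing a union bound over this point, and using the strong renewal identity $\bP((n,m)\in\tau,\,(i,j)\in\tau) = \bP((n,m)\in\tau)\,\bP((i-n,j-m)\in\tau)$ valid for $(n,m)\leq (i,j)$, yields
\[
\bP\bigl(\exists s,\,\tau_s\notin J_{i,j},\,(i,j)\in\tau\bigr) \,\leq\, \sum_{\substack{0\leq n\leq i,\,0\leq m\leq j\\ |n-m|>2\ell_i}} \bP\bigl((n,m)\in\tau\bigr)\,\bP\bigl((i-n,j-m)\in\tau\bigr).
\]

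The geometric input that makes this useful is the hypothesis $i\leq j\leq i+\ell_i$: writing $u := n-m$ with $|u|>2\ell_i$, the complementary point $(i-n,j-m)$ has coordinate difference $(i-j)-u$, of absolute value at least $|u|-\ell_i\geq \tfrac12|u|\geq \ell_i$. Hence both renewal masses in the right-hand side above are evaluated at points in the off-diagonal regime (distance at least $\ell_i\gg a_i$ from the diagonal, by the choice \eqref{def:ell} of $\ell_i$). I would then invoke the off-diagonal estimates collected in Appendix~\ref{app:bivrenewal} (Theorem~\ref{th:reD} and its companions), which in this regime control $\bP((n,m)\in\tau)$ either by a single-step-type bound of order $L_*(|u|)/|u|^{1+\ga}$ when $\ga\in(1,2]$, or by a Gaussian tail in $|u|/\sqrt{n\vee m}$ when $\ga>2$.

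The hard part is then the careful bookkeeping in the resulting double summation: one must split the domain of summation so that each renewal mass factor is estimated in the appropriate regime, avoiding any waste that would produce a spurious power of $i$. Integrating the tail in $u$ from $2\ell_i$ to infinity produces $\ell_i^{-\ga}$, whereas summing the diagonal-scale factor $1/a_n\sim n^{-1/(\ga\wedge 2)}$ over the longitudinal coordinate $n\in[0,i]$ produces $i^{1-1/(\ga\wedge 2)}$; in combination they yield the target bound. For $\ga>2$, the specific choice $\ell_i = C\sqrt{i\log i}$ with $C$ large enough ensures that the Gaussian tail $\exp(-c\ell_i^2/i)\lesssim i^{-cC^2}$ is small enough so that, after summation and up to slowly varying corrections absorbed in $L_4$, the polynomial bound $\ell_i^{-\ga}$ still holds.
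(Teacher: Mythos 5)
Your plan diverges from the paper's argument at the very first step, and the divergence is not benign: a union bound over \emph{all} intermediate epochs $(n,m)\notin J_{i,j}$ bounds $\bP(\exists s,\tau_s\notin J_{i,j},(i,j)\in\tau)$ by the \emph{expected number} of such epochs on the bridge, not by the probability that there is at least one. These quantities differ here by a factor of order $i$. Heuristically, once the random walk $S_k=\tau_k^{(1)}-\tau_k^{(2)}$ exits the strip (which it does essentially by a single large jump), it stays out for a time comparable to the whole trajectory, since $\ell_i^{\,\alpha\wedge2}\gg i$; so conditionally on at least one excursion point there are $\Theta(i)$ of them. One can check this quantitatively: plugging the off-diagonal bound from Theorem~\ref{th:reD}, $\bP((n,n+u)\in\tau)\lesssim a_n^{\alpha-\delta}u^{-(1+\alpha)+\delta}$, into your double sum, the contribution of the bulk $n\asymp i$, $u\asymp\ell_i$ alone is of order $i^3\ell_i^{-1-2\alpha}$, which for $\ell_i\asymp a_i^{1+o(1)}$ equals $i\cdot i^{1-1/(\alpha\wedge2)}\ell_i^{-\alpha}$ up to slowly varying factors — a full power of $i$ larger than \eqref{eq:notinJ}. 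Your heuristic accounting (``integrating the tail gives $\ell_i^{-\alpha}$, summing $1/a_n$ gives $i^{1-1/(\alpha\wedge2)}$'') silently drops the $a_n^{\alpha}$ prefactor that Theorem~\ref{th:reD} carries, and this prefactor is precisely what produces the extra $i$.

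The paper avoids this overcounting entirely. It decomposes the event by the \emph{last} renewal epoch inside $J_{i,j}$ and the \emph{first} epoch outside (a disjoint decomposition, not a union bound over all exit points), restricts by symmetry to the exit having first coordinate $\le i/2$, bounds the probability of the remaining path to $(i,j)$ by $c/a_i$ (so the second renewal-mass factor is treated only crudely, on-diagonal), and then recognizes that what is left is exactly $\bP(\max_{t\le i/2}|S_t|\ge\ell_i)$ for the centered random walk $S_k=\tau_k^{(1)}-\tau_k^{(2)}$. The key input is then a maximal inequality for this walk (from \cite{cf:SW11}, \cite{cf:Pi81}, \cite{cf:BB00}), giving $\lesssim i\,L(i)\,\ell_i^{-\alpha}$, and multiplying by $c/a_i$ produces \eqref{eq:notinJ}. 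Your plan never introduces $S_k$ and never invokes a maximal inequality; this is the missing idea. A secondary point: in the case $\alpha>2$ you propose a Gaussian tail $\exp(-c\ell_i^2/i)$, but the increments of $S_k$ have regularly varying tails of index $\alpha$ even when their variance is finite, and at the level $\ell_i\asymp\sqrt{i\log i}$ the maximal inequality the paper uses is of polynomial (one-big-jump) form $i\,\ell_i^{-\alpha}$, not Gaussian — this is what makes the exponent in \eqref{eq:notinJ} match.
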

\begin{proof}
Let us first observe that by symmetry, we get that
\begin{multline}
\label{eq:ts1}
 \bP\left(  \exists s , \tau_s \notin J_{i,j} \, , \, (i,j)\in\tau\right)   = 2\, \bP\left(\exists s , \tau_s \notin J_{i,j}, \tau_s^{(1)}\leq i/2 \, , \, (i,j)\in\tau \right)
% = \sum_{s_0=1}^{i-1}   \bP\left( \tau_{s_0} \notin J_{i,j} \, , \, \tau_{s_0-1} \in J_{i,j} \, , \, (i,j)\in\tau\right)
  \\
 \leq 2 \sum_{(a,b) \in J_{i,j}} \sumtwo{(k,l);\, (a+k,b+l) \notin J_{i,j}}{a+k\leq i/2} \bP((a,b) \in \tau) K(k+l) \bP \left( (i-a-k, j-b-l) \in \tau \right) \,.
\end{multline}

From Theorem \ref{th:reD}, we see that the last term in the double sum of \eqref{eq:ts1} is bounded above by $c_7/a_i$ (since $i-a-k \geq i/2$). We get that \eqref{eq:ts1} is bounded above by
\begin{equation}
\label{eq:ts2}
 \frac{c_{13}}{a_i} \,  \sum_{a=1}^{i/2} \sum_{r=0}^{\ell_i} \bP \left( (a,a+r) \in \tau \right)  \sum_{k = 1}^{i/2 - a} \sum_{l \ge \ell_i - a-r} K(k+l) 
 \le  \frac{c_{12}}{a_i} \,  \bP\left(  \exists t \le i/2 , \tau_t \notin \overline{J}_{i} \right)\,,
\end{equation}
with
$\overline{J}_{i} := \big\{ (a,b)  \, ;\,     |a-b| \leq  \ell_i \big\}  \,.$

Let us now define 
\begin{equation}
S_k = \tau_k^{(1)} - \tau_k^{(2)} \,,
\end{equation}
Then we see that
\begin{equation}
\label{eq:ts3}
 \bP\left(  \exists t \le i/2 , \tau_t \notin \overline{J}_{i} \right)  = 
 \bP \Big( \max_{t \le i/2} \vert S_t \vert \ge \ell_i \Big) \,.
\end{equation}
Observe that $\left\lbrace S_k \right\rbrace$ is a centred random walk in the domain of attraction of a stable law of index $\ga >1$. From the Lemma in \cite{cf:SW11} for the case $\ga \in (1,2]$ (and infinite variance) and \cite[Corollary 1]{cf:Pi81} or equation (12) in \cite{cf:BB00} for the case $\ga >2$ (or $\ga=2$ and finite variance) we get that
\begin{equation}
\label{eq:maxS}
 \bP \Big( \max_{t \le i/2} \vert S_t \vert \ge \ell_i \Big) \le i \, L_{3}(i) \, (\ell_i)^{- \ga} \,.
\end{equation} 
Therefore by \eqref{eq:ts2}, \eqref{eq:ts3} and \eqref{eq:maxS}, we obtain \eqref{eq:notinJ}.
\end{proof}

%For $U_2$, since $j \le i + \ell_i$ and using again Theorem \ref{th:reD} we obtain 
%\begin{equation}
%U_2 \le \frac{C}{a_i} \sum_{s=1}^{i/2} \sum_{ t= \ell_i \wedge s}^s \bP( (s,s-t)\in\tau )\,,
%\end{equation}
%which is interpreted as $U_1$ and we are done.

\begin{lemma}
\label{lem:homogen}
Assume that $i\leq j$ and $\ga>1$. There exist  constants $c_{15},c_{16}>0$ such that, for any sequence $u_i \leq 1$ (we may take $u_i\to 0$ as $i\to+\infty$), we have
\[
Z_{i,j}(-u_i) := \bE\left[ e^{- u_i\, \vert \tau \cap \lbrace 1,...,i \rbrace \times \lbrace 1,...,j \rbrace \vert } \ind_{(i,j)\in\tau} \right] 
\leq  c_{15}\frac{ K(i+j)}{u_i^2} + \bP((i,j)\in\tau) \, e^{ - c_{16} i u_i }\, .
\]
In particular, we always have
\[ Z_{i,j}(-u_i) \leq   \frac{L_{5}(i) i^{-(2+\ga)}}{u_i^2}  + L_{6} (i) i^{-1/\ga\wedge 2} e^{- c_{16} i u_i}\]
\end{lemma}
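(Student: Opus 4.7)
The plan is to expand the expectation as
\[
Z_{i,j}(-u_i) \;=\; \sum_{N\ge 1} e^{-u_i N}\, \bP(\tau_N=(i,j)),
\]
which follows because on $\{(i,j)\in\tau\}$ the counter $N_{i,j}$ coincides with the unique $N$ such that $\tau_N=(i,j)$. I then split at a threshold $N^\ast := c_{16}\, i$, where $c_{16}\in(0,\, 1/(2\mu))$ is a small absolute constant (allowed since $\ga>1$ gives $\mu<\infty$). For the tail $N\ge N^\ast$, factoring out $e^{-u_i N^\ast}$ immediately yields
\[
\sum_{N\ge N^\ast} e^{-u_i N}\, \bP(\tau_N=(i,j)) \;\le\; e^{-c_{16}\, u_i\, i}\, \bP((i,j)\in\tau),
\]
which is precisely the second term of the claimed bound.

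For the head $1\le N<N^\ast$, the event $\{\tau_N=(i,j)\}$ is a large deviation since $\bE[\tau_N^{(1)}]=\mu N \le i/2$ whereas we ask for $\tau_N^{(1)}=i$. The key technical input will be the one-big-jump bound for the bivariate heavy-tailed renewal with $\ga>1$:
\[
\bP(\tau_N=(i,j)) \;\le\; C_0\, N\, K(i+j) \qquad \text{for } 1\le N \le N^\ast. \qquad (\ast)
\]
Granting $(\ast)$, a geometric summation closes the head:
\[
\sum_{N=1}^{N^\ast} e^{-u_i N}\, C_0\, N\, K(i+j) \;\le\; C_0\, K(i+j)\sum_{N\ge 1} N e^{-u_i N} \;\le\; \frac{C_0\, K(i+j)}{(1-e^{-u_i})^2} \;\le\; \frac{4 C_0\, K(i+j)}{u_i^2},
\]
using $1-e^{-u}\ge u/2$ on $[0,1]$; this yields the first term of the lemma with $c_{15}=4C_0$.

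The main obstacle is to justify $(\ast)$. My approach is a union bound on the index $s$ of the largest jump, keeping only the contribution where this jump has $L^1$-size at least $(i+j)/2$:
\[
\bP(\tau_N=(i,j)) \le \sum_{s=1}^N \sumtwo{(a,b),(x,y)}{x+y\ge (i+j)/2}\bP(\tau_{s-1}=(a,b))\, K(x+y)\, \bP(\tau_{N-s}=(i-a-x,j-b-y)),
\]
exploiting that $K$ is regularly varying to write $K(x+y)\le C\, K(i+j)$ on this event, and that for $\ga>1$ the bivariate renewal mass function is uniformly bounded (Theorem~\ref{th:reD}), so the remaining convolutions collapse to a constant multiple of $N$. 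The complementary ``no big jump'' contribution requires a Nagaev-type local large-deviation estimate for heavy-tailed bivariate sums, which can be transferred from the one-dimensional version applied to the marginal $X_1^{(1)}+X_1^{(2)}$ (this variable has a regularly varying tail of index $\ga>1$ with finite mean $2\mu$) and yields a negligible term for $N\le c_{16}\, i$, well inside the Cram\'er-type regime. Finally, the ``in particular'' statement is immediate from the first inequality by inserting $K(i+j)\le L_5(i)\, i^{-(2+\ga)}$ (valid since $i\le j$ forces $i+j\ge 2i$) and Theorem~\ref{th:reD}'s estimate $\bP((i,j)\in\tau)\le L_6(i)\, i^{-1/(\ga\wedge 2)}$.
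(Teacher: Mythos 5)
Your decomposition---writing $Z_{i,j}(-u_i)=\sum_N e^{-u_i N}\bP(\tau_N=(i,j))$, splitting at a threshold proportional to $i$, factoring $e^{-u_i N^\ast}$ out of the tail, and closing the head with a local large-deviation bound of the form $\bP(\tau_N=(i,j))\le C\,N\,K(i+j)$ followed by a geometric sum---is exactly the structure of the paper's proof (the paper's cutoff is $i/(2\mu)$, yours is $c_{16}\,i$ with $c_{16}<1/(2\mu)$; these are interchangeable). The one real difference is how the head estimate $(\ast)$ is obtained. The paper simply invokes Theorem~\ref{thm:locallargedev} (Theorem~2.4 of \cite{cf:B}) as a black box. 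You instead try to reconstruct it: the one-big-jump union bound is correct and does give $C\,N\,K(i+j)$ (though the reason the double convolution collapses is simply that $\sum_{(a,b)}\bP(\tau_{s-1}=(a,b))\le 1$ and $\sum_{(i',j')}\bP(\tau_{N-s}=(i',j'))\le 1$, not the uniform pointwise bound on the renewal mass function from Theorem~\ref{th:reD}), but the complementary ``no big jump'' piece is precisely the nontrivial content of Theorem~\ref{thm:locallargedev}, and you leave it as an appeal to an unspecified ``Nagaev-type local large-deviation estimate.'' That is a genuine gap in a self-contained argument; for $\ga\in(1,2)$ with infinite variance this requires a truncated-exponential or Fuk--Nagaev argument at the \emph{local} (not just tail) level, which is not supplied. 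Since the paper fills the identical gap by a citation to \cite{cf:B} rather than a proof, your proposal is, net of that citation, equivalent to the paper's argument.
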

\begin{proof}
The last inequality comes from the fact that for $i\leq j$ we have $K(i+j) \leq c L(i) i^{-(2+\ga)}$, and the fact that Theorem \ref{th:reD}  give
$\bP((i,j)\in\tau) \leq c_{14} /a_i$ with $a_i = \psi(i) i^{1/\ga\wedge 2}$.

We write
\[Z_{i,j}(-u_{i}) = \sum_{k=1}^i e^{- k u_i} \bP(\tau_k = (i,j) ) =  \Big( \sum_{k=1}^{i/2\mu} + \sum_{k=i/2\mu}^i \Big) e^{- k u_i} \bP \big( \tau_k = (i,j) \big) \, .\]
For the first sum, we use Theorem \ref{thm:locallargedev} to get that $\bP(\tau_k = (i,j) ) \leq c_{15} k K(i+j)$ for $k\leq i/2\mu$, so
\begin{equation}
\label{sum1}
\sum_{k=1}^{i/2\mu}  e^{- k u_i} \bP(\tau_k = (i,j) ) \leq c_{15} \frac{K(i+j)}{u_i^2} \sum_{k=1}^{+\infty} u_i ku_i e^{-k u_i} \leq c_{15}  \frac{K(i+j)}{u_i^2} ,
\end{equation}
where for the last inequality we bounded the sum by a constant times $\int_{\bbR_+} x e^{-x} \dd x$ (thanks to a Riemann-sum approximation for sequences $u_i\to 0$).

For the second sum we simply bound $k$ by $i/2\mu$ to get that it is smaller than
\begin{equation}
\label{sum2}
e^{- i u_i /2\mu } \sum_{k=i/2\mu}^i   \bP(\tau_k = (i,j) ) \le  \bP((i,j) \in\tau) e^{- c_{16} i u_i} \, .
\end{equation}
Combining \eqref{sum1} and \eqref{sum2}, we obtain Lemma \ref{lem:homogen}.
\end{proof}

Using Lemma \ref{lem:notinJ} and Lemma \ref{lem:homogen} in \eqref{eq:max1}, and with $u_i= c_{17} \gl\gb = c_{17}\gb (i \ell_i)^{-1/2}  \leq 1$,
\begin{equation}
\bbE_{i,j,\lambda} \big[ Z_{i,j,\omega} \big]  \leq  L_{7}(i) i^{1-1/\ga\wedge 2} (\ell_i)^{-\ga} + L_{8}(i) \frac{i^{-(1+\ga)} \ell_i}{\gb^2} + L_{9}(i) i^{-1/\ga\wedge 2} e^{- c_{17} \gb i^{1/2} \ell_i^{-1/2} }\, .
\end{equation}

Finally, this concludes the proof of Proposition \ref{prop:chgmeasure} thanks to \eqref{eq:afterHolder}, using that $(a+b+c)^{\gd} \leq a^\gd + b^{\gd} + c^{\gd}$ for $\gd\in(0,1)$.

\end{proof}

\subsection{Conclusion of the proof of Theorem \ref{thm:rel} in the case $\ga > 2$}
\label{sec:ga2}

%Remark that $\bbE[z_{1,1}^\delta]$ is bounded above by a constant independent of $a$, then by \eqref{eq:1rec} and \eqref{eq:sup}, there exists a slowly varying function $L_1(\cdot)$ such that (see \eqref{eq:rho} for the definition of $\rho_1$)

Let $\delta < 1$ be sufficiently close to $1$ to have
\begin{equation}
\label{eq:condga}
(2+\ga) \delta > 4 \,,
\end{equation} 
which implies that $\gd(\ga-1) >1$.

We start by estimating $\rho_1$.
Let $R$ be a large constant and split the sum in \eqref{eq:rho1} as 
\begin{equation}
S_1+ S_2 \, := \, \bigg( \sum_{i,j=0}^{k-R-1} + \sum_{i,j=k-R}^{k-1} \bigg) \frac{L_1(2k -i -j)}{(2k -i-j)^{(2+\ga) \delta - 2}} \cA_{i,j} \,,
\end{equation}
and 
\begin{equation}
S_3+S_4 \, := \, \bigg(  \sum_{i=0}^{k-R-1} \sum_{j=k-R}^{k-1} + \sum_{i=k-R}^{k-1} \sum_{j=0}^{k-R-1} \bigg) \frac{L_1( 2k -i -j)}{(2k -i-j)^{(2+\ga) \delta - 2}} \cA_{i,j} \,.
\end{equation}

%
%Using Jensen's inequality, we have that $\cA_{i,j} \le { (\bbE Z_{i,j,\omega})}^\delta$. From \eqref{eq:pf} and \eqref{eq:hann} one sees that
%\begin{equation}
%\label{eq:ineqA}
%\bbE Z_{i,j,\omega}  = \bE \left[ \exp \left( \frac{a \beta^4}{{\left( \log(1/\beta^4) \right)}^{1+\epsilon}} \vert \tau \cap \lbrace 1,...,i \rbrace \times \lbrace 1,..., j \rbrace \vert \right) \mathbf{1}_{(i,j)\in \tau} \right] \le e \, \bP((i,j) \in \tau) \,,
%\end{equation}
%where the last inequality follows from the fact that $\vert \tau \cap \lbrace 1,...,i \rbrace \times \lbrace 1,..., j \rbrace \vert \le (i \wedge j) \le  k =\frac{{\left( \log(1/\beta^4) \right)}^{1+\epsilon}}{(a \beta^4)}$. 
% Moreover, from Theorem~\ref{th:reD} and Theorem~\ref{th:reoff}, there exists $C_1 >0$ such that
%\begin{equation}
%\label{eq:cons1}
%\bP \left( (i,j) \in \tau \right) \le \frac{C_1}{\sqrt{\min (i,j)}} \,.
%\end{equation}

Using the fact that $A_{i,j} \le e^\delta$ from \eqref{eq:ineqA}, we get 
\begin{equation}
\label{eq:S1}
S_1 \le \frac{L_{17}(R)}{R^{(2+\ga) \delta-4}} \,,
\end{equation}
and the right-hand side of \eqref{eq:S1} can be made small by \eqref{eq:condga} and because $R$ is large.

For $S_2$, there exists $C_4$ such that
$S_2 \le C_4 \max_{k - R \le i,j < k} \cA_{i,j} $,
and from \eqref{eq:ineqA}, combined with Theorem \ref{th:reD}, there exists $C_5$ such that
\begin{equation}
\max_{k - R \le i,j < k} \cA_{i,j} \le   e^{\delta} \, \max_{k - R \le i,j < k}  \bP((i,j) \in \tau) ^\delta \le \frac{C_5}{k^{\delta/2}} \,,
\end{equation}
then $S_2$ is arbitrarily small for $k$ large.

Since $S_3$ and $S_4$ are the same quantity, we just focus on $S_3$. Since $A_{i,j} \le e^ \delta$ from \eqref{eq:ineqA}, we obtain
\begin{equation}
S_3 \le \frac{L_{18}(R)}{R^{(2+\ga) \delta-4}} \,,
\end{equation}
which again can be made small in view of the condition \eqref{eq:condga} and because $R$ is large.
Hence $\rho_1$ can be made arbitrarily small by choosing $R$ large and $k$ large (\textit{i.e.}\ $\gb$ small).

\medskip
Let us now look at $\rho_2$ in \eqref{eq:rho2}.
We split the sum  to:
\begin{equation}
S_5 + S_6 =  \bigg( \sum_{i=0}^{k-1} \sum_{j=0}^{i}  +  \sum_{i=0}^{k-1} \sum_{j=i+1}^{k-1} \bigg)  \cA_{i,j} \frac{L_{2}(k-j)}{(k-j)^{(2+\ga)\delta -2}} \,.
\end{equation}

Let us first study $S_5$: 
\begin{equation}
\begin{split}
S_5 & = \sum_{j=0}^{k-1} \sum_{i=j}^{k-1}  \cA_{i,j}  \frac{L_{2}(k-j)}{(k-j)^{(2+\ga)\delta -2}} \\
& = \bigg( \sum_{j=0}^{ k/2  } \sum_{i=j}^{k-1} + \sum_{ j=  k/2  +1 }^{k-1}  \sum_{i=j}^{k-1} \bigg)
\cA_{i,j}  \frac{L_{2}(k-j)}{(k-j)^{(2+\ga)\delta -2}} := S_{5a} + S_{5b} \,.
\end{split}
\end{equation}

Using that  $\cA_{i,j} \le e^{\delta}$ from \eqref{eq:ineqA}, we get
\begin{equation}
S_{5a} \le \frac{L_{19}(k)}{k^{(2+\ga)\delta -4 }} \,.
\end{equation}

For $S_{5b}$, we use \eqref{eq:ineqA} and Theorem \ref{th:reD} which gives that if $i\geq j$ $\cA_{i,j} \leq  cst. j^{-\gd/2}$ to get
\begin{equation}
S_{5b} \le \sum_{ j=  k/2  +1 }^{k-1}  \sum_{i=j}^{k-1} \frac{C_6}{j^{\delta/2}} \frac{L_{2}(k-j)}{(k-j)^{(2+\ga)\delta -2}}   \le \frac{C_7}{k^{ \delta/2}}  \,. 
\end{equation}
Then $S_5$ can be made small for $k$ large and from condition \eqref{eq:condga}.

Now we split $S_6$ as

\begin{equation}
S_{6a} + S_{6b} = \bigg( \sum_{i=0}^{ k/2 } +  \sum_{i=  k/2  + 1}^{k-1} \bigg) \sum_{j=(i+ \ell_i +1) \wedge (k-1)}^{k-1} \cA_{i,j} \frac{L_{2}(k-j)}{(k-j)^{(2+\ga)\delta -2}} \,.
\end{equation}
and
\begin{equation}
S_{6c} + S_{6d} = \bigg( \sum_{i=0}^{ k/2 } +  \sum_{i= k/2  +1}^{k-1} \bigg) \sum_{j=i + 1}^{(i+ \ell_i) \wedge (k-1)} \cA_{i,j} \frac{L_{2}(k-j)}{(k-j)^{(2+\ga)\delta -2}} \,.
\end{equation}

Using \eqref{eq:ineqA} and Theorem \ref{th:reD}, we see that
\begin{equation}
\begin{split}
S_{6a} & \le \sum_{i=0}^{ k/2 }  \sum_{j= 1}^{ 3k/4} \frac{L_{20}(k-j)}{(k-j)^{(2+\ga)\delta -2}} + \sum_{i=0}^{ k/2 } \sum_{j= 3k/4}^{k-1} \frac{C_8 i^{\gd} L(j-i)^{\gd}}{(j-i)^{(1+\ga) \delta }} \frac{L_{2}(k-j)}{(k-j)^{(2+\ga)\delta -2}} \\
& \le \frac{L_{21}(k)}{k^{(2+\ga)\delta -4}} + \frac{L_{22}(k)}{k^{\ga \delta  -1}} \,,
\end{split}
\end{equation}
and
\begin{align*}
S_{6b} &\le \sum_{i=  k/2  +1}^{k-1} \sum_{j=(i +c\sqrt{k\log k} ) \wedge (k-1)}^{k-1} \frac{C_8 i^{\gd} L(j-i)^{\gd}}{(j-i)^{(1+\ga) \delta }} \frac{L_{2}(k-j)}{(k-j)^{(2+\ga)\delta -2}} \\
&\le C_8 k^{\gd} \sum_{j=k/2}^{k-1}  \sum_{x\geq c \sqrt{k\log k}} \frac{L(x)^{\gd}}{ x^{(1+\ga) \gd}} \frac{L_{2}(k-j)}{(k-j)^{(2+\ga)\delta -2}} \leq  \frac{L_{23}(k) \, k^{\gd}}{ k^{\tfrac12 ( (1+\ga)\gd-1)}} = \frac{L_{23}(k)}{ k^{\tfrac12((\ga-1)\gd-1)}} \, .
\end{align*}
Hence, both $S_{6a}$ and $S_{6b}$ are arbitrarily small for $k$ large,  by the condition \eqref{eq:condga}.

\smallskip
By \eqref{eq:ineqA}, and since provided that $k$ is large enough we have $i+\sqrt{i\log i}\leq 3k/4$ for $i\leq k/2$, we obtain
\begin{equation}
S_{6c} \le C_9 \sum_{i=0}^{ k/2 } \sqrt{i\log i}  \frac{L_{2}(k)}{k^{(2+\ga)\delta -2}} \le \frac{L_{24}(k)}{ k^{(2+\ga)\delta - 7/2 } } \,,
\end{equation}
which is arbitrarily small for $k$ large.

For the term $S_{6d}$, since for every $j\in \{k/2+2,\ldots, k-1\}$ there are at most $C_{10} \sqrt{k\log k}$ corresponding terms in the sum over $i$, we have
\begin{align}
\label{eq:S6d}
S_{6d} & \le C_{11} \sqrt{k\log k} \maxtwo{ k/2  \le i \le k}{i  \le j \le  i+ \sqrt{i \log i} } \cA_{i,j}  .
\end{align}
Then we use Proposition \ref{prop:chgmeasure}, and more precisely \eqref{boundA2}, to get that
\begin{equation}
S_{6d} \le L_{25}(k) k^{- \frac12 (\gd (\ga-1)-1)}
\end{equation}
In view of the condition \eqref{eq:condga}, $S_{6d}$ can be made arbitrarily small for $k$ large.
This completes the proof of  \eqref{eq:rel} in the case $\ga>2$.

%From \cite{cf:Hun}, we have that
%\begin{equation}
%\lim_{N \to \infty} \frac{1}{N} \sum_{n=1}^N \sum_{m=1}^N \mathbf{1}_{(n,m) \in \tau} \stackrel{N \to \infty}{\longrightarrow} \frac{1}{\max \lbrace \bE(\tau_1^{(1)}),\bE(\tau_1^{(2)}) \rbrace} \,.
%\end{equation}
%choosing $a$ small enough, the right-hand side of \eqref{eq:S6} can be made small.

\subsection{Conclusion of the proof of Theorem \ref{thm:rel} in the case $\ga\in(1,2]$}
\label{sec:ga1}

Fix $\gep >0$ small and let $0 < \delta <1$ such that
\begin{equation}
\label{eq:cond1}
\delta \left[ (2+\ga) + (1-\gep^2)/\ga \right] > 4 -  \gep^2 \,.
\end{equation}
which implies in particular that $\delta (2+\ga) > 3$.
We also assume that
\begin{equation}
\label{eq:cond2}
 \gd\ga >1\, , \quad \gd \,\big( 1+(1+\ga) \gep^4 \big) > 1+\gep^4 \, , \quad \text{and}\quad \gd>(1+\gep^4)/(1+\gep^2/2) \, .
\end{equation}

%We take $\beta \le \beta_0$ and 
%\begin{equation}
%h = h_c^{a}(\beta) + \Delta :=  h_c^{a}(\beta)  +  a \beta^{\frac{2 \ga }{\ga -1}(1+\gep)} \,.
%\end{equation}
%
%Now for fixed $\gep$ and $\delta$ verifying \eqref{eq:cond1} and \eqref{eq:cond2}, we choose $k$ to be of the order of the correlation lenght of the annealed system as in the previous section: then by Theorem~\ref{th:beta0} we have $\tf_1(0,\Delta) \sim L_{\ga,1}(\Delta) \Delta$ and choose 
%\begin{equation}
%\label{eq:k2}
%k := \frac{1}{\Delta} = \frac{1}{a \beta^{\frac{2 \ga }{\ga -1}(1+\gep)} } \,.
%\end{equation}
%
%Thanks to Proposition~\ref{th:rho}, to prove Theorem~\ref{th:2ga3}, it suffices then to show that \eqref{eq:rho} is $o(1)$ for $k$ large. 

\medskip
Let us start with showing that $\rho_1$ is small: we split the sum in \eqref{eq:rho1} to
\begin{equation}
T_1 + T_2 := \bigg( \sum_{i,j=0}^{ k^{1- \gep^2} }  + \sum_{i,j=  k^{1- \gep^2}  + 1}^{k-1}  \bigg) \frac{L_1( 2 k -i-j) }{(2k -i-j)^{(2+\ga)\delta -2}} A_{i,j} \,,
\end{equation}
and
\begin{equation}
T_3 + T_4 := \bigg( \sum_{i=0}^{  k^{1- \gep^2} } \sum_{j=  k^{1- \gep^2}  + 1}^{k-1} + \sum_{i=  k^{1- \gep^2}  + 1}^{k-1} \sum_{j=0}^{ k^{1- \gep^2} }  \bigg) \frac{L_1( 2 k -i-j) }{(2k -i-j)^{(2+\ga)\delta -2}} A_{i,j} \,.
\end{equation}

%Observe that by Jensen's inequality we have
%\begin{equation}
%\label{eq:A2}
% A_{i,j} \le ( \bbE Z_{i,j,\omega} )^\delta \le e^\delta { \bP((i,j) \in \tau)}^\delta \,.
%\end{equation}
%As a consequence of Theorem~\ref{th:reD} and Theorem~\ref{th:reoff}, there exists $C_7$ such that (assuming $i <j$)
%\begin{equation}
%\label{eq:pa}
%\bP((i,j) \in \tau) \le \frac{C_7}{a_i} \,.
%\end{equation}

\medskip
For $\ga\leq 2$, we know that there exists a slowly varying function $\psi(\cdot)$ such that $a_i = \psi(i) i^{1/\ga}$. For $T_1$, using \eqref{eq:ineqA} and Theorem \ref{th:reD}, we get
\begin{equation}
T_1 \le \frac{L_{26}(k)}{k^{(2+\ga) \delta - 2}}  \sum_{i,j =0}^{k^{1-\gep^2}}   \frac{1}{(a_{\min(i,j)})^\gd}  \leq \frac{L_{27}(k)}{ k^{(1-\gep^2) ( \delta/\ga - 2) +(2+\ga) \delta - 2}} \,,
\end{equation}
and from the condition \eqref{eq:cond1}, $T_1$ can be made small for $k$ large.

For $T_2$, since $(2+\ga)\gd-2 \in(1,2)$, we have
\begin{equation}
T_2 \le L_{28}(k) k^{-(2+\ga)\gd + 4} \max_{ k^{1-\gep^2}  \le i,j \le k} \cA_{i,j} < \frac{L_{29}(k) }{ k^{(1- \gep^2)\delta/\ga +(2+\ga)\gd-4}} \,,
\end{equation}
where for the last inequality we used \eqref{eq:ineqA} and Theorem \ref{th:reD}.
Then $T_2$ is small for $k$ large thanks to \eqref{eq:cond1}.

For $T_3$ (which is equal to $T_4$), since for the range of $i,j$ considered we have  $2k-i-j \geq k/2$, we get using \eqref{eq:ineqA} and Theorem \ref{th:reD}
\begin{equation}
T_3 \le \sum_{i=0}^{k^{1-\gep^2}} \frac{1}{(a_i)^{\gd}}\frac{L_{30}(k)}{k^{(2+\ga) \delta - 3}} \leq  \frac{L_{31}(k)}{ k^{(2+\ga) \delta - 3 +(1-\gep^2)(\gd/\ga-1)}}\,,
\end{equation}
which can be made small by taking $k$ large, thanks to \eqref{eq:cond1}. In the end, we get that $\rho_1$ is bounded from above by a small constant for $k$ large.

\medskip

As far as $\rho_2$ is concerned,  we split the right-hand side of \eqref{eq:rho2} to 
\begin{equation}
T_5 + T_6 = \sum_{i=0}^{k-1}  \sum_{j=i+1}^{k-1} \cA_{i,j} \frac{L_2(k-j)}{(k-j)^{(2+\ga)\delta -2}} + \sum_{i=0}^{k-1} \sum_{j=0}^{i} \cA_{i,j}  \frac{L_2(k-j)}{(k-j)^{(2+\ga)\delta -2}} \,.
\end{equation}

Recall the definition of $\ell_i$ in Proposition \ref{prop:chgmeasure}, and define $\bar \ell_i=i^{(1+\gep^4)/\ga} \gg \ell_i$. We split $T_5$ as
\begin{equation}
T_{5a} + T_{5b} = \bigg( \sum_{i=0}^{ k/2} +  \sum_{i= k/2 + 1}^{k-1} \bigg) \sum_{j=(i + \bar \ell_i) \wedge (k-1)}^{k-1} \cA_{i,j} \frac{L_2(k-j)}{(k-j)^{(2+\ga)\delta -2}} \,.
\end{equation}
and 
\begin{equation}
T_{5c} + T_{5d} = \bigg( \sum_{i=0}^{ k^{1- \gep^ 2} } +  \sum_{i= k^{1- \gep^ 2} +1}^{k-1} \bigg) \sum_{j=i + 1}^{(i + \bar \ell_i ) \wedge (k-1)} \cA_{i,j} \frac{L_2(k-j)}{(k-j)^{(2+\ga)\delta -2}} \,.
\end{equation}

From \eqref{eq:ineqA} and Theorem \ref{th:reD}, we get that (using that $(2+\ga)\gd -2>1$ for the second line)
\begin{equation}
\begin{split}
T_{5a} & \le \sum_{i=0}^{ k/2 } \bigg( \sum_{j=i +   \bar \ell_i }^{ 3k/4}   + \sum_{j= 3k/4+1 }^{k-1} \bigg) \frac{C_{}12 i^{\gd}L(j-i)^{\gd}}{(j-i)^{(1+\ga) \delta  }} \frac{L_2(k-j)}{(k-j)^{(2+\ga)\delta -2}} \\
& \le  \frac{L_{32}(k)}{k^{(2+\ga)\gd -2}} \sum_{i=1}^{k/2} i^{\gd} \frac{L_{33}(\bar \ell_i)}{(\bar \ell_i)^{(1+\ga)\gd -1 }} +L_{34}(k) k^{\gd+1} \frac{1}{k^{(1+ \ga)\gd  }}  \\
&\le   \frac{L_{35}(k) k^{1+\gd}}{ k^{(2+\ga)\gd -2 + \tfrac{1+\gep^4}{\ga}((1+\ga)\gd-1) }}+ \frac{L_{34}(k)}{k^{\delta \ga -1 }} \leq   \frac{L_{35}(k) }{ k^{\tfrac{1+\gep^4}{\ga}((1+\ga)\gd-1) -\gd }} + \frac{L_{34}(k)}{k^{\delta \ga -1 }} \,,
\end{split}
\end{equation}
and also  (using also here that $(2+\ga)\gd -2>1$ for the third line)
\begin{equation}
\begin{split}
T_{5b} & \le \sum_{i= k/2+1}^{k-1} \sum_{j=(i+  c \bar \ell_k ) \wedge (k-1)}^{k-1} \frac{C_{12} i^{\gd}L(j-i)^{\gd}}{(j-i)^{(1+\ga) \delta }} \frac{L_2(k-j)}{(k-j)^{(2+\ga)\delta -2}} \\
&\le C_{12} k^{\gd} \sum_{j=k/2}^{k-1} \frac{L_2(k-j)}{(k-j)^{(2+\ga)\delta -2}} \sum_{x\geq  c  \bar\ell_k}\frac{L(x)}{ x^{(1+\ga)\gd}} \\
& \le  L_{36}(k) k^{\gd}  (\bar\ell_k)^{1-(1+\ga)\gd} =\frac{L_{36}(k)}{ k^{\tfrac{1+\gep^4}{\ga}((1+\ga)\gd -1) - \gd }} \,.
\end{split}
\end{equation}
Then the condition \eqref{eq:cond2} guarantees that $T_{5a}$, $T_{5b}$ can be made arbitrarily small by choosing $k$ large.

Using \eqref{eq:ineqA} and Theorem \ref{th:reD}, we get 
\begin{equation}
\begin{split}
T_{5c} & \le \sum_{i=0}^{  k^{1- \gep^ 2} } \sum_{j=i + 1}^{(i+  \bar\ell_i ) \wedge (k-1)}  \frac{C_{13}}{a_i^{\gd}} \frac{L_2(k-j)}{(k-j)^{(2+\ga)\delta -2}} \le   \frac{L_{37}(k)}{k^{(2+\ga ) \delta -2}} \sum_{i=0}^{  k^{1- \gep^ 2} }  L_{38}(i) i^{(1+\gep^4)/\ga -\gd/\ga} \\
& \le   \frac{L_{39}(k)}{k^{(2+\ga ) \delta -2}} k^{(1-\gep^2) ( (1+\gep^4-\gd)/\ga  +1 )  }\, .
\end{split}
\end{equation}
Again, \eqref{eq:cond1} insures that $T_{5c}$ can be made arbitrarily small by choosing $k$ large.

\smallskip
Finally, it remains to bound $T_{5d}$. As for \eqref{eq:S6d}, there are at most $\bar\ell_k$ terms in the sum over $i$ (and $(2+\ga)\gd-2>1$), so that
\begin{equation}
T_{5d} \le C_{14} \, \bar\ell_{k} \maxtwo{  k^{1- \gep^2}  \le i < k-1}{i  \le j \le i+ \ell_i} \cA_{i,j} \,.
\end{equation}
Then we use Proposition \ref{prop:chgmeasure}, and more precisely \eqref{boundA1}, to get that
\begin{equation}
T_{5d} \le L_{40}(k) k^{(1+\gep^4)/\ga} k^{-\gd (1+\gep^2/2)/ \ga} , 
\end{equation}
which can be made arbitrarily small by choosing $k$ large, because of condition \eqref{eq:cond2}.

%By Section~\ref{sec:com}, for $\lambda= 1/\sqrt{\# J_{i,j}}$ we see that
%\begin{equation}
%\label{eq:A22}
%\cA_{i,j} \le { \left[ \bbE_{i,j,\lambda} [Z_{i,j,\omega}] \right] }^{\delta} \exp(c_1 \delta/(1-\delta)) \,.
%\end{equation}
%
%Applying Proposition~\ref{th:12ez}, we get
%\begin{equation}
%\begin{split}
%T_{5d} & \le   C_{12} \, a_k  \, \max_{ \lfloor k^{1- \gep^2} \rfloor +1 \le i < k-1} \left[    ?? + \frac{L_{17}(i) \psi_4(i)}{i^{\delta \left[ \ga + \frac{\gep}{2} ( \frac{\ga-1 }{ \ga}) \right]}}   \right] \\
%& \le ?? + \frac{L_{21}(k) \psi_7(k)}{k^{\delta (1- \gep^2) \left[ \ga + \frac{\gep}{2} ( \frac{\ga-1 }{ \ga}) \right]}}  \\
%\end{split}
%\end{equation}
%and $T_{5d}$ can be made small by \eqref{eq:cond1}, \eqref{eq:cond2} and $k$ large.

For $T_6$, we have
\begin{equation}
T_6 \le  \sum_{j=0}^{k-1} \sum_{i=j}^{k-1} \cA_{i,j}  \frac{L_2(k-i)}{(k-i)^{(2+\ga)\delta -2}} \,.
\end{equation}
By following the same procedure adopted for $T_{5}$,  $T_{6}$ is bounded above by a small term when $k$ is large. 
The proof of \eqref{eq:rel} in the case $\ga\in(1,2]$ is therefore complete and, with it, also the proof
of the lower bound part of Theorem~\ref{thm:rel}.

\begin{appendix}

\section{Bivariate renewal theory, important estimates}
\label{app:bivrenewal}

We present here some results on the bivariate renewal process $\tau$ defined in Section~\ref{sec:model}, and in particular Proposition \ref{prop:transience} which gives some conditions on the transience/recurrence of the intersection renewal $\sigma=\tau\cap\tau'$. Recall the notations of Section \ref{sec:defan} for the recentering sequence $(b_n)_{n\geq 0}$ and for the scaling sequence $(a_n)_{n\geq 0}$.

\subsection{Local large deviations and a useful Lemma}

We first present some local large deviation estimate,  which is used in the proof of Lemma \ref{lem:homogen}.
\begin{theorem}[Theorem~2.4 in \cite{cf:B}]
\label{thm:locallargedev}
Assume that $\mu<+\infty$. We have that there exists a constant $C>0$ such that uniformly for $n$ such that $n - \mu k \geq a_k \wedge (C\sqrt{k\log k}),$
\[\bP(\tau_k =(n,n)) \leq  C\,  k \, K(n-\mu k)  \, .\]
\end{theorem}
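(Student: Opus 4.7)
The plan is to reduce the bivariate local estimate to a classical one-dimensional sharp large deviation, by exploiting the structure of the increment law $\bP(X_1=(a,b))=K(a+b)$, which depends only on the sum of the coordinates. Concretely, I would introduce the sum renewal $\underline{\tau}_k := \tau_k^{(1)}+\tau_k^{(2)}$, whose increments $\underline{X}_i = X_i^{(1)}+X_i^{(2)}$ have law $\bP(\underline{X}_1 = s) = (s-1)K(s)$, so that $\underline{\tau}_k$ is a standard one-dimensional renewal. The key structural observation is that conditionally on $\underline{X}_i = s$ the coordinate $X_i^{(1)}$ is uniformly distributed on $\{1,\ldots,s-1\}$; hence, conditionally on $(\underline{X}_1,\ldots,\underline{X}_k)$ the $(X_i^{(1)})$ are independent uniforms, giving the factorization
\[
\bP(\tau_k = (n,n))\, =\, \bE\Big[\bP\Big(\textstyle\sum_i X_i^{(1)} = n\,\Big|\,\underline{X}_1,\ldots,\underline{X}_k\Big)\, \ind_{\underline{\tau}_k = 2n}\Big].
\]

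Next, I would bound the inner conditional probability using only the uniform marginal of one coordinate: since $X_1^{(1)}\mid\underline{X}_1$ is uniform on $\{1,\ldots,\underline{X}_1 - 1\}$, conditioning on $(X_i^{(1)})_{i\geq 2}$ gives the pointwise bound
\[
\bP\Big(\textstyle\sum_i X_i^{(1)} = n\,\Big|\,\underline{X}_1,\ldots,\underline{X}_k\Big)\, \leq\, \frac{1}{\max_i \underline{X}_i - 1}.
\]
Then I would invoke a classical sharp local large deviation estimate of Doney/Denisov--Dieker--Shneer type for $\underline{\tau}_k$ in the regime $n-\mu k \geq a_k \wedge C\sqrt{k\log k}$, which covers both the Gaussian moderate-deviations and the heavy-tail big-jump ranges:
\[
\bP(\underline{\tau}_k = 2n)\, \leq\, C_1 k\, \bP(\underline{X}_1 = 2(n-\mu k))\, \asymp\, C_1 k (n-\mu k)\, K(n-\mu k),
\]
together with the quantitative one-big-jump estimate $\bE\big[(\max_i \underline{X}_i)^{-1}\,\big|\,\underline{\tau}_k = 2n\big]\leq C_2/(n-\mu k)$. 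Combining these three ingredients yields $\bP(\tau_k=(n,n))\, \leq\, C_3 k\, K(n-\mu k)$, as required.

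The main obstacle is the last ingredient: the trivial bound $\max_i \underline{X}_i \geq \underline{\tau}_k/k = 2n/k$ only gives $\bE[(\max_i \underline{X}_i)^{-1}\,|\,\underline{\tau}_k=2n] \leq k/(2n)$, which is far too weak. Obtaining the sharp $O(1/(n-\mu k))$ estimate requires a quantitative one-big-jump decomposition: I would write $\bP(\underline{\tau}_k=2n)=\sum_j \bP(\underline{\tau}_k = 2n,\,\underline{X}_{(1)}=j)$ (with $\underline{X}_{(1)}$ the largest jump), and use the regular variation of $K$ together with a local renewal estimate for the remaining $k-1$ jumps (as in Theorem~\ref{th:reD}) to show that the sum is dominated by $j \asymp n-\mu k$; the complementary moderate-deviations regime is handled separately by a direct local CLT. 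The peculiar threshold $a_k \wedge C\sqrt{k\log k}$ is precisely the crossover scale: for $\ga > 2$ the finite variance gives $a_k \asymp \sqrt{k}$, while for $\ga \leq 2$ the Gaussian moderate-deviations regime is cut off by a log factor at scale $\sqrt{k\log k}$, well below the big-jump scale $a_k$.
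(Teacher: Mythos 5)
This theorem is not proven in the paper: it is cited verbatim as Theorem~2.4 of \cite{cf:B}, so there is no in-paper argument against which to compare your attempt. Evaluating your proposal on its own merits: the reduction to a one-dimensional problem is clean and exploits a genuine structural feature of the gPS increment law. Your observations are correct that $\bP(\underline{X}_1=s)=(s-1)K(s)$, that conditionally on $(\underline{X}_i)_{1\le i\le k}$ the first coordinates $(X_i^{(1)})_i$ are independent with $X_i^{(1)}$ uniform on $\{1,\ldots,\underline{X}_i-1\}$, and that the stated bound would follow from the sharp one-dimensional big-jump estimate for $\bP(\underline{\tau}_k=2n)$ together with $\bE[(\max_i\underline{X}_i)^{-1}\mid\underline{\tau}_k=2n]\lesssim(n-\mu k)^{-1}$. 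One slip in the writeup: conditioning on $(X_i^{(1)})_{i\ge 2}$ only yields the bound $1/(\underline{X}_1-1)$; to get $1/(\max_i\underline{X}_i-1)$ you need to integrate out all indices except $i^*=\arg\max_i\underline{X}_i$ — the conditional independence makes this fix immediate, but the sentence as written does not produce the claimed inequality.

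Two further points worth flagging. First, the conditional estimate $\bE[(\max_i\underline{X}_i)^{-1}\mid\underline{\tau}_k=2n]\lesssim(n-\mu k)^{-1}$ is strictly stronger than the classical Doney/Denisov--Dieker--Shneer local big-jump principle for $\bP(\underline{\tau}_k=2n)$ and is not an off-the-shelf result; you correctly identify it as the core difficulty, but the sketched one-big-jump decomposition over the value $j$ of the dominant jump needs to show that the contribution from $j\ll n-\mu k$ is negligible, which is real work (roughly, one must combine the local renewal bound for the remaining $k-1$ steps with regular variation of $K$ and sum over $j$). Second, for $\alpha<2$ the moderate-deviations regime is actually trivial — the local stable limit theorem gives $\bP(\tau_k=(n,n))\lesssim a_k^{-2}$ uniformly, while $kK(n-\mu k)\gtrsim kK(a_k)\asymp a_k^{-2}$ for $n-\mu k\le a_k$, so no large-deviation input is needed there — but for $\alpha>2$ there is a genuine tension: at $n-\mu k\asymp a_k\asymp\sqrt k$ the local CLT gives $\bP(\tau_k=(n,n))\asymp k^{-1}$, whereas $kK(\sqrt k)\asymp k^{-\alpha/2}<k^{-1}$. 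The bound can therefore only hold once the Gaussian factor $e^{-c(n-\mu k)^2/k}$ compensates, i.e.\ once $n-\mu k\gtrsim\sqrt{k\log k}$. This strongly suggests that the correct threshold in the cited result is $a_k\vee C\sqrt{k\log k}$ rather than $a_k\wedge C\sqrt{k\log k}$; the applications in the present paper (notably Lemma~\ref{lem:homogen}) all have $n-\mu k$ of order $n$, far above either scale, so the discrepancy is harmless downstream, but your proof should be organized around the $\vee$ version of the condition.
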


We give another useful lemma, that controls the number of renewals in $(0,N]\times(0,M]$, in the case $\ga\in(0,1)$.
\begin{lemma}
\label{th:limp}
Assume $\ga\in(0,1)$.
Given $\delta>0$ there exists $\gep>0$ such that for $N$ sufficiently large and $M \sim \gamma N$ we have
\begin{equation}
 \bP \Big( \vert \tau \cap (0,N] \times (0,M] \vert \ge \gep N^{\ga}/L(N) \Big) \ge 1- \delta \,.
\end{equation}
\end{lemma}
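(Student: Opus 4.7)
The plan is to reduce the statement to a one-dimensional tail bound on the marginal $\tau_n^{(1)}$ at the scale $n\asymp\gep N^{\ga}/L(N)$, and then invoke stable convergence. First I would exploit the monotonicity of the bivariate renewal: since inter-arrivals take values in $\bbN^2$ (so each coordinate of an inter-arrival is a strictly positive integer), both $k\mapsto\tau_k^{(1)}$ and $k\mapsto\tau_k^{(2)}$ are strictly increasing sequences of positive integers, and therefore
\[
|\tau\cap(0,N]\times(0,M]|\,\ge\, n \;\Longleftrightarrow\; \tau_n^{(1)}\le N\ \text{and}\ \tau_n^{(2)}\le M.
\]
By the symmetry of the joint law (a consequence of $\bP(\tau_1=(n,m))=K(n+m)$ depending only on the sum), $\tau_n^{(1)}$ and $\tau_n^{(2)}$ have the same marginal distribution, and the union bound gives
\[
\bP\big(|\tau\cap(0,N]\times(0,M]|<n\big)\le 2\,\bP\big(\tau_n^{(1)}>\min(N,M)\big).
\]

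Second, I would set up the marginal tail asymptotics and the corresponding stable limit theorem. A direct computation yields $\bP(\tau_1^{(1)}>N)=\sum_{n>N}\sum_{m\ge 1}K(n+m)\sim c\,L(N)/N^{\ga}$ as $N\to\infty$, for an explicit constant $c=c(\ga)$. This places $\tau_1^{(1)}$ in the domain of attraction of a positive $\ga$-stable law; since $\ga\in(0,1)$ the recentering $b_n$ of Section~\ref{sec:defan} vanishes, so $\tau_n^{(1)}/a_n$ converges in distribution to an a.s.\ finite positive $\ga$-stable random variable $S$.

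Third, I would set $n:=\lfloor\gep N^{\ga}/L(N)\rfloor$ and relate $a_n$ to $N$. From the defining relation $L(a_n)a_n^{-\ga}\sim 1/n$ one obtains $a_n^{\ga}/L(a_n)\sim\gep N^{\ga}/L(N)$; a Potter-bound argument on the slowly varying function $L$ then yields, for any fixed small $\eta\in(0,\ga)$ and all $N$ large enough, $a_n/N\le (2\gep)^{1/(\ga+\eta)}$. Since $M\sim\gamma N$, we have $\min(N,M)/a_n\to+\infty$ as $\gep\to 0$, uniformly in $N$ large, and stable convergence gives
\[
\limsup_{N\to\infty}\bP\big(\tau_n^{(1)}>\min(N,M)\big)\,\le\, \bP\Big(S\ge \tfrac{1}{2}\min(1,\gamma)(2\gep)^{-1/(\ga+\eta)}\Big),
\]
whose right-hand side tends to $0$ as $\gep\searrow 0$. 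I would then pick $\gep$ small enough that this bound is below $\gd/4$ and $N_0$ large enough to absorb the $\limsup$, concluding the proof. The only mildly technical step is the Potter-bound control of $a_n/N$; every other ingredient is routine.
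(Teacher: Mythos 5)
Your proof is correct, and it takes a genuinely different route from the paper's. Both arguments begin by observing that $\vert\tau\cap(0,N]\times(0,M]\vert\ge n$ iff $\tau_n\in(0,N]\times(0,M]$ (increments are $\ge 1$ in each coordinate), and both then use a union bound with the symmetry of the marginals to reduce to a one-dimensional tail estimate at index $n\approx\gep N^\ga/L(N)$. The divergence is in how that tail is controlled. The paper truncates each increment of $\tau^{(1)}$ at level $N$, splits off the probability that some increment exceeds the cutoff (estimated directly from $\bP(\tau_1^{(1)}>N)\sim cL(N)N^{-\ga}$), and handles the truncated sum via an exponential Markov inequality with a hand-tuned tilt $\lambda_1\asymp N^{-1}\log(1/\gd)$; the argument is entirely self-contained and quantitative. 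You instead invoke the stable limit theorem $\tau_n^{(1)}/a_n\Rightarrow S$ (valid since $b_n=0$ for $\ga<1$), then use the definition \eqref{def:an} of $a_n$ together with a Potter bound to show $a_n/N\lesssim\gep^{1/(\ga+\eta)}$ so that $\min(N,M)/a_n$ diverges as $\gep\searrow 0$, and conclude by Portmanteau. Your approach is shorter and conceptually cleaner, at the cost of leaning on the stable CLT as a black box; the paper's is longer but elementary and avoids appealing to any limit theorem. The only place where your write-up glosses over a detail is the Potter step: you should first note that $a_n\le N$ for $\gep$ small and $N$ large (which follows from regular variation of $x\mapsto x^\ga/L(x)$), so that Potter's bound applies in the right direction — but this is a minor point and the argument is sound.
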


\begin{proof}
Set $n = n(\gep,N) = \gep N^{\ga}/L(N)$ and $B_{N,M} := (0,N]\times (0,M]$. We want to prove
\begin{equation}
\bP \left( \tau_n \notin B_{N,M} \right) \le \delta \,.
\end{equation}
Let us define $\tilde \tau_n = (\tilde \tau_n^{(1)} ,\tilde \tau_n^{(2)})$ with
\begin{equation}
\tilde{\tau}^{(1)}_n := \sum_{i=1}^n (\tau_i^{(1)} - \tau_{i-1}^{(1)}) \mathbf{1}_{ \lbrace \tau_i^{(1)} - \tau_{i-1}^{(1)} \le N \rbrace} \, ,\quad 
\tilde{\tau}^{(2)}_n := \sum_{i=1}^n (\tau_i^{(2)} - \tau_{i-1}^{(2)}) \mathbf{1}_{ \lbrace \tau_i^{(2)} - \tau_{i-1}^{(2)} \le M \rbrace} \,.
\end{equation}

Then we have 
\begin{equation}
\label{eq:tautilde}
\bP \left( \tau_n \notin B_{N,M} \right) \le \bP \left( \tilde{\tau_n} \notin B_{N,M} \right) + \bP \left( \exists i \le n ; (\tau_i - \tau_{i-1}) \notin B_{N,M} \right) \,.
\end{equation}
Note that the marginals $\tau^{(1)}_n$ and $\tau^{(2)}_n$ have the same distribution: as $N \to \infty$ we have
\begin{equation}
\bP( \tau^{(1)}_1= N) = \bP( \tau^{(2)}_1= N) \sim \frac{1}{(1+\ga)} L(N) N^{-(1+ \ga)} \,.
\end{equation}

Therefore, we can bound the second term in \eqref{eq:tautilde} by
\begin{equation}
\begin{split}
 n \left( \bP ( \tau^{(1)}_1 > N  )+ \bP ( \tau^{(2)}_1 > M ) \right)
& \le n \left(  C_{\ga} L(N) N^{-\ga} +C_{\ga} L(M) M^{-\ga}\right) \le \gep  C_{\ga,\gamma}   \,,
\end{split}
\end{equation}
which is smaller than $\gd/2$ if $\gep \leq  \gd/(2 C_{\ga,\gamma})$.

The first term in \eqref{eq:tautilde} is bounded by $ \bP(\tilde{\tau}^{(1)}_n > N) + \bP(\tilde{\tau}^{(2)}_n > M)$. Observe that for every choice of $\lambda_1 >0$ we have
\begin{equation}
\bP ( \tilde{\tau}^{(1)}_n > N ) \le e^{ -\lambda_1 N}  \bE [e^{\lambda_1 \tilde{\tau}^{(1)}_n} ] 
\le e^{n \log \bE [e^{\lambda_1 \tilde{\tau}^{(1)}_1} ] - \lambda_1 N  }\,.
\label{eq:expobound}
\end{equation}
Using the fact that $\tilde \tau_1^{(1)}  = \tau_1^{(1)} \ind_{\{\tau_1^{(1)}\leq N \}}\leq N$, we get that for any $s \ge 1$
\begin{equation}
\bE [(\tilde{\tau}^{(1)}_1)^s   ] \le N^{s-1}
\bE [\tau^{(1)}_1  \mathbf{1}_{\{\tau^{(1)}_1 \leq N\}} ] \le c_{\ga} L(N) N^{- \ga } N^s \,,
\end{equation}
where we used that $\bP(\tau_1^{(1)} =N) \leq cst. L(N) N^{-(1+\ga)}$ with $\ga\in(0,1)$ to estimate the second expectation. 
In the end, expanding the exponential and using the above bound, we get
\begin{equation}
\log \bE \left[e^{\lambda_1 \tilde{\tau}^{(1)}_1}  \right] \le \log \left( 1 + c_{\ga} L(N) N^{- \ga } (e^{\lambda_1 N} -1) \right) \le  c_{\ga} L(N) N^{- \ga } e^{\lambda_1 N} \,.
\end{equation}

We pick $k_0$ such that $\gd^{k_0-1}\leq e^{-1}/4$, and choose $\lambda_1 = k_0  N^{-1} \log(1/ \delta)$, then with the definition of $n=\gep L(N)^{-1} N^{\ga}$, we get
$n \log \bE [e^{\lambda_1 \tilde{\tau}^{(1)}_1} ] \le  c_{\ga} \gep \delta^{-k_0} $, and choosing $\gep\leq c_{\ga}^{-1} \gd^{k_0}$ we get from \eqref{eq:expobound}
\begin{equation}
\bP(\tilde{\tau}^{(1)}_n > N) \le  e \gd^{k_0} \leq \gd/4  \,.
\end{equation}

Using the same reasoning and choosing $\lambda_2 =  k_0 M^{-1} \log (1/\delta)$ , we have that if $\gep\leq  c_{\ga,\gamma}^{-1} \gd^{k_0}$ (for some constant $c_{\ga,\gamma}$), 
\begin{equation}
\bP(\tilde{\tau}^{(2)}_n > M) \le e \delta^{k_0} \leq \gd/4 \,.
\end{equation}

The proof is therefore complete by taking $\gep = \min \lbrace \delta / (2 C_{\ga,\gamma}) , c_{\ga}^{-1} \delta^{k_0} , c_{\ga,\gamma}^{-1} \delta^{k_0} \rbrace$.
\end{proof}

\subsection{Renewal theorems, and the intersection of two independent copies}

The goal of this section is to estimate the mean overlap of two copies $\tau$ and $\tau'$ in the region $(0,N]\times (0,M]$. We leave aside the case $\ga=1$ which is  more technical (in particular if $\mu=+\infty$): we refer to Remark \ref{rem:alpha=1} for more comments on this case.
We define
\begin{equation}
\label{eq:UNM}
U_{N,M}\,:= \,\bE\left[ \left| \sigma\cap \left([0,N]\times [0,M]\right) \right|\right] \,=\,\sum_{n=0}^{N} \sum_{m=0}^{M} \bP((n,m)\in\tau )^2 \, ,
\end{equation}
and for any $\gl>0$
\begin{equation}
\hat U (\gl) := \sum_{n,m=0}^{+\infty} e^{-\gl (n+m)}  \bP((n,m)\in\tau )^2 \, .
\end{equation}
%Recall that $a_n = \psi(n) n^{1/(\ga\wedge 2)}$ is the normalizing sequence for $\tau_n$ (cf. \eqref{def:an}-\eqref{an}). 

\begin{proposition}
\label{prop:transience}
If $\ga<1$, then 
$\sup_{N,M\in\bbN} \  U_{N,M}  <+\infty.$

If $\ga>1$, then set $\rho:=1-\min(\ga,2)^{-1} \in [0,1/2]$. We have
\begin{equation}
\label{suman}
\sum_{n=1}^N \frac{1}{a_n} \sim   \varphi(N) N^{\rho} \to +\infty \qquad \text{as } N\to\infty,
\end{equation}
for some slowly varying function $ \varphi (\cdot)$.
Moreover, 
\begin{equation}
\label{eq:asympUn}
\begin{aligned}
U_{N,N}&\sim 2 c_{\ga}  \varphi(N) N^{\rho}  &\quad \text{as } N\to\infty, \\
  \hat U(\gl) & \sim \frac{2^{1-\rho} c_{\ga}}{\Gamma(1+\rho)}  \varphi(1/\gl) \gl^{-\rho} &\quad \text{as } \gl\downarrow 0, 
\end{aligned}
\end{equation}
with $c_{\ga} = \int_{0}^{\infty} c_{\ga}(t)^2 \mathrm{d}t$, $c_{\ga}(t)$ being the constant appearing in Theorem \ref{th:reD}.

As a consequence, 
$\sigma=\tau\cap\tau'$ is terminating if $\ga<1$, and persistent if $\ga> 1$.
\end{proposition}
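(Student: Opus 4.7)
My plan is to derive the three statements from the bivariate local limit theorem (Theorem~\ref{th:reD}), which says that for $(n,m)$ in the bulk regime close to the diagonal,
\[ \bP((n,m)\in\tau) \, \sim \, \frac{1}{a_n} \, c_\ga\!\bigl((m-n)/a_n\bigr),\]
with $c_\ga$ the symmetric profile in the statement (so that $\int_0^\infty c_\ga(t)^2 \dd t = c_\ga < \infty$), while $\bP((n,m)\in\tau)$ is rapidly decaying when $|m-n|\gg a_n$---this off-diagonal decay is of the same nature as the one exploited in Lemma~\ref{lem:notinJ}. First I would prove \eqref{suman} by pure Karamata: since $a_n = \psi(n) n^{1/\min(\ga,2)}$, the sequence $1/a_n$ is regularly varying of index $-(1-\rho)$, so $\sum_{n\leq N} 1/a_n \sim N^\rho/(\rho\psi(N)) =: \varphi(N) N^\rho$ in the case $\rho>0$ ($\ga>1$), while $\sum_n 1/a_n<\infty$ in the case $\ga<1$ (where the index is $<-1$).

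Second, plug the LLT into $U_{N,N}=\sum_{n,m\leq N}\bP((n,m)\in\tau)^2$. For each bulk $n$, a Riemann-sum approximation (with stepsize $1/a_n\to 0$) combined with the symmetry $c_\ga(-t)=c_\ga(t)$ gives
\[ \sum_{m=0}^N \bP((n,m)\in\tau)^2 \, \sim \, \frac{1}{a_n^2}\sum_m c_\ga^2\!\bigl((m-n)/a_n\bigr) \, \sim \, \frac{1}{a_n}\int_\bbR c_\ga(t)^2 \dd t \, = \, \frac{2 c_\ga}{a_n}.\]
Summing over $n$ yields $U_{N,N}\sim 2c_\ga\sum_{n\leq N}1/a_n$, which combined with the first step gives $U_{N,N}\sim 2c_\ga\,\varphi(N) N^\rho$ when $\ga>1$ and $\sup_N U_{N,N}<\infty$ when $\ga<1$. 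For $\hat U(\lambda)$, the same substitution yields $\hat U(\lambda)\sim 2c_\ga\sum_n e^{-2\lambda n}/a_n$ (the approximation is valid as long as $\lambda a_n\to 0$, i.e.\ for $n\ll \lambda^{-\min(\ga,2)}$, which is the range producing the dominant contribution), and Karamata's Tauberian theorem applied to the regularly-varying sequence $(1/a_n)_n$ then produces the stated $\lambda^{-\rho}\varphi(1/\lambda)$ power behavior (using $\varphi(1/(2\lambda))\sim\varphi(1/\lambda)$ and the explicit $2^{1-\rho}$ arising from the change $2\lambda\mapsto \lambda$).

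Finally, the terminating/persistent dichotomy is immediate from the observation stated just before the proposition: $|\sigma|$ is a geometric random variable with parameter $1/\bE^{\otimes 2}[|\sigma|] = 1/\lim_N U_{N,N}$, so $\sigma$ is terminating iff $\lim_N U_{N,N}<\infty$, i.e.\ iff $\ga<1$. The main technical difficulty is promoting the pointwise LLT into the uniform Riemann-sum estimate above: one needs uniform control of the $o(1)$ error in Theorem~\ref{th:reD} over the whole range of summation, a quantitative tail in the variable $|m-n|/a_n$ to argue that the off-diagonal region contributes negligibly to $U_{N,N}$, and a separate treatment of the small-$n$ and small-$m$ boundary (which contributes only a bounded amount when $\ga>1$, and is absorbed into the finite sum when $\ga<1$). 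These ingredients are either directly provided by Theorem~\ref{th:reD} or adaptable from the random-walk bounds already used in Lemma~\ref{lem:notinJ}.
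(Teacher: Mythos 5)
Your argument for $\ga>1$ follows essentially the same path as the paper: Riemann-sum approximation of the local limit theorem along slices of fixed $n$, Karamata's theorem for $\sum_{n\leq N} 1/a_n$, a Tauberian argument for $\hat U(\gl)$, and the geometric-random-variable observation $\bE^{\otimes 2}[|\sigma|]=\lim U_{N,N}$ for the terminating/persistent dichotomy. The paper is more careful (truncated sums $W_n^{(\gep)}$ and matched upper/lower bounds) where you wave at ``uniform control of the $o(1)$ error,'' but the structure of the proof is the same.

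There is, however, a genuine gap in the case $\ga<1$. You base the whole derivation on Theorem~\ref{th:reD}, in the form $\bP((n,m)\in\tau)\sim a_n^{-1}\,c_\ga((m-n)/a_n)$, but this theorem is stated \emph{only for $\ga>1$}: it requires $\mu<\infty$, so that reaching distance $n$ takes $\approx n/\mu$ renewal epochs with transversal spread of order $a_n$. When $\ga\in(0,1)$ the picture is completely different. Reaching distance $n$ takes only $\approx n^\ga/L(n)$ epochs, the transversal spread at that moment is of order $n$ (not $a_n$), and the correct asymptotic is Theorem~\ref{th:re01}:
\[
\bP\big((n,n+r)\in\tau\big)\ \sim\ C_\ga(r/n)\, L(n)^{-1}\, n^{-(2-\ga)},
\]
with the profile in the variable $r/n$. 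Note in particular that for $\ga<1$ one has $a_n\sim\psi(n)n^{1/\ga}\gg n$, and the exponents $2-\ga$ and $1/\ga$ differ unless $\ga=1$, so the two asymptotics genuinely disagree. Hence the slice-sum $\sum_m \bP((n,m)\in\tau)^2$ is of order $L(n)^{-2}n^{2\ga-3}$ (width $\asymp n$ times square of peak height), not $2c_\ga/a_n$. The conclusion $\sup_{N,M}U_{N,M}<\infty$ still holds because $2\ga-3<-1$ for $\ga<1$, but your proof does not establish it: to make the $\ga<1$ case correct you must replace Theorem~\ref{th:reD} by Theorem~\ref{th:re01} (including its off-diagonal tail bound in $r/n$) when estimating $W_n=\sum_{r\geq 0}\bP((n,n+r)\in\tau)^2$, exactly as the paper does.
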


This proposition is based on renewal theorems (see Theorems~\ref{th:re01}-\ref{th:reD} below), that can be found in \cite{cf:B} (in a morel general setting), giving sharp asymptotics along the favorite direction, and general upper bounds away from it.
%We collect the results giving estimates of the quantity $\bP((n,n+r)\in\tau)$ for any $r\geq 0$ (this is enough by symmetry) in the two following theorems, dealing with the cases $\ga\in(0,1)$ and $\ga > 1$.
The case $\ga=1$  can also be found in \cite{cf:B} but we do not include it here, see Remark \ref{rem:alpha=1}.

\begin{theorem}
\label{th:re01}
If  $\ga \in (0,1)$, then for $n\to+\infty $ and $r$ such that $r/n \to t \in \bbR_+$, we have
\begin{equation}
\label{eq:D01}
\bP \left( (n,n+r) \in \tau \right) \stackrel{n \to \infty}\sim C_{\ga}(t) L(n)^{-1} n^{-(2-\ga)} \,,
\end{equation}
with $C_{\ga}(t) := \ga \int_{0}^{+ \infty} x^{1- \ga} g_\ga (x , (1+t) x ) \, \mathrm{d}x$.
Moreover, for any $\gd>0$ there is a constant $C_{\gd}>0$ such that for any $r\geq n$,
\begin{equation}
\bP\big( (n,n+r)\in\tau \big) \leq C_{\gd} L(n)^{-1} n^{-(2-\ga)} \times    \Big(\frac rn \Big)^{-(1+\ga) +\gd} \, .
\end{equation}
\end{theorem}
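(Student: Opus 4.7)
The strategy is to combine the decomposition $\bP((n,n+r) \in \tau) = \sum_{k \geq 1} \bP(\tau_k = (n,n+r))$ with a bivariate local limit theorem for the partial sums $\tau_k$. Since $\ga \in (0,1)$ we have $b_n = 0$, so $\tau_k/a_k$ converges weakly to a bivariate $\ga$-stable distribution with density $g_\ga$; a Gnedenko-type local limit theorem then yields
\[
\bP(\tau_k = (n,m)) \;=\; \frac{1}{a_k^2}\, g_\ga\!\Big(\frac{n}{a_k},\frac{m}{a_k}\Big) \,+\, o(a_k^{-2}),
\]
uniformly over the range of $(n,m)$ relevant for the sum. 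Proving this LLT is a Fourier inversion on the characteristic function of $\tau_1$; the essential input is that $\tau_1$ lies in the normal domain of attraction of the stable law, up to the slowly varying function $L$.

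Given the LLT, the main contribution to the sum over $k$ comes from the range $a_k \asymp n$, i.e.\ $k \asymp n^\ga/L(n)$. Performing the change of variables $x = n/a_k$ and using the defining relation $L(a_k) a_k^{-\ga} \sim 1/k$ (which gives $|{\rm d}k| \sim (\ga n^\ga/L(n)) x^{-\ga-1}\,\dd x$) converts the sum into a Riemann sum approximating
\[
\sum_{k \geq 1} \frac{1}{a_k^2}\, g_\ga\!\Big(\frac{n}{a_k},\frac{n+r}{a_k}\Big) \;\simn\; \frac{\ga}{L(n)\, n^{2-\ga}} \int_0^\infty x^{1-\ga}\, g_\ga(x,(1+t)x)\,\dd x,
\]
where $t = \lim r/n$. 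This is precisely \eqref{eq:D01} with $C_\ga(t) = \ga \int_0^\infty x^{1-\ga} g_\ga(x,(1+t)x)\,\dd x$.

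For the uniform off-diagonal upper bound when $r \geq n$, the plan is to refine the LLT into an inequality $\bP(\tau_k = (n,m)) \leq C a_k^{-2}\, \bar g_\ga(n/a_k, m/a_k)$, where $\bar g_\ga$ is a tail majorant of $g_\ga$. The key fact is that the bivariate stable density satisfies $\bar g_\ga(x,y) \leq C\, x^{-(1+\ga)}\,(y-x)^{-(1+\ga)}$ on $y \geq 2x$, which can be established via Fourier inversion or via the one-big-jump heuristic for stable sums. Summing in $k$ and integrating as in the previous paragraph yields the claimed estimate, with the extra $\delta$ in the exponent absorbing the slowly varying factors via Potter bounds. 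An equivalent route would be the last-renewal decomposition $\bP((n,n+r) \in \tau) = \sum_{(n',m')} \bP((n',m') \in \tau)\, K(n-n'+n+r-m')$, splitting according to whether the final jump has length larger or smaller than $r/2$: the long-jump part is handled directly by the tail $K(\ell) \asymp L(\ell)\,\ell^{-(2+\ga)}$ together with the already proved diagonal estimate for $\bP((n',m')\in\tau)$.

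The main obstacle is establishing the bivariate LLT with a remainder that is \emph{uniform} in $(n,m)$ over a range that is both diagonal and off-diagonal. Handling small values of $k$ (where the LLT has not yet kicked in) and large values (where $a_k \gg n$ and one must use the value of $g_\ga$ near the origin) requires separate estimates of the correct order. Combined with careful bookkeeping for the slowly varying function $L$ and for the stable density's off-diagonal tail, this is the place where the argument in \cite{cf:B} requires genuine work.
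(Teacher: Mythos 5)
The paper does not prove Theorem~\ref{th:re01}: as the text immediately following Theorem~\ref{th:reD} states, both renewal theorems are imported from \cite{cf:B} (Theorems~3.1 and~4.1, equations~(3.4) and~(4.2) in the symmetric setting), so there is no in-paper proof against which your sketch can be measured.

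That said, your outline is the standard route for this type of result and is consistent with what \cite{cf:B} does: decompose $\bP((n,n+r)\in\tau)=\sum_{k\ge1}\bP(\tau_k=(n,n+r))$, invoke a bivariate local limit theorem at scale $a_k$ (with $b_k=0$ since $\ga<1$), and turn the sum over $k$ into a Riemann sum concentrated where $a_k\asymp n$. Your change of variables $x=n/a_k$, $|\mathrm{d}k|\sim(\ga n^\ga/L(n))x^{-\ga-1}\,\mathrm{d}x$, correctly reproduces $C_\ga(t)=\ga\int_0^\infty x^{1-\ga}g_\ga(x,(1+t)x)\,\mathrm{d}x$. For the off-diagonal bound, the second route you mention (last-renewal decomposition, splitting on the length of the final jump and feeding in the already-established diagonal estimate together with the tail of $K$) is the robust one and is essentially what \cite{cf:B} uses. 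Your first route, a pointwise majorant of $g_\ga$ of the form $x^{-(1+\ga)}(y-x)^{-(1+\ga)}$, is more delicate than you suggest: the increments here are exchangeable and $K$ is a function of $n+m$ only, so the spectral measure of the limiting stable law is spread over the entire first quadrant; the off-diagonal decay of $g_\ga$ is not the naive product of two one-dimensional tails, and one would still need a local-limit upper bound uniform over the whole range of $(n,m)$, which is precisely the hard part.

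The remaining gaps are the ones you flag yourself and are the substance of \cite{cf:B}: (i) a local limit theorem whose remainder $o(a_k^{-2})$ is uniform in $(n,m)$ both near and far from the favorite direction; (ii) a large-deviation/one-big-jump estimate for small $k$, where $n/a_k\to\infty$ and the LLT gives nothing; (iii) control of the large-$k$ tail, which here relies on the exponential smallness of $g_\ga$ near the origin (true for $\ga<1$, but it must be quantified and compared with the LLT remainder); and (iv) the Potter-type bookkeeping for $L$ that produces the $\gd$ loss. Your proposal is a correct road map, not an independent proof; since the paper defers entirely to \cite{cf:B}, that is the only honest thing to say.
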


\begin{theorem}
\label{th:reD}
If $\ga> 1$, for  $n \to \infty$ and $r$ such that $r/a_n \to t \in \bbR_+$, we have that
\begin{equation}
\bP \left( (n,n+r) \in \tau \right) \stackrel{n \to \infty}\sim c_{\ga}(t)\, \frac{1}{a_n} \,,
\end{equation}
where $c_{\ga}(t) = \mu_\ga \int_{- \infty}^{+ \infty} g_\ga (x,x+\mu_\ga t ) \mathrm{d} x$ with $\mu_\ga := \mu^{1/\min(\ga,2)}$.
Moreover, for any $\gd>0$ there exists a constant $C_{\gd}>0$ such that for any $r\geq a_n$,
\begin{equation}
\label{eq:offdiag}
\bP\big( (n, n+r) \in \tau \big) \leq \frac{C}{a_n}  \Big( \frac{r}{a_n} \Big)^{-(1+\ga) +\gd}\, .
\end{equation}
\end{theorem}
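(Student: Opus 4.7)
The plan is to deduce the pointwise renewal asymptotics from a two-dimensional local limit theorem (LLT) for the i.i.d.\ bivariate sum $\tau_k$, combined with a Riemann-sum argument and a one-big-jump decomposition for the off-diagonal tail. The starting point is the basic decomposition
\[
\bP((n,n+r)\in\tau) \,=\, \sum_{k\geq 1} \bP\big(\tau_k=(n,n+r)\big),
\]
the summand being concentrated around $k\approx n/\mu$ by the law of large numbers (recall $\mu<\infty$ since $\ga>1$).

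For the sharp asymptotics with $r/a_n\to t$, I would first establish the bivariate LLT
\[
\bP\big(\tau_k=(n,m)\big)\,=\,\frac{1}{a_k^2}\,g_\ga\!\left(\tfrac{n-\mu k}{a_k},\tfrac{m-\mu k}{a_k}\right) + o(a_k^{-2}),
\]
uniformly on $\{|n-\mu k|\vee|m-\mu k|\leq Ma_k\}$ for any fixed $M$, by Fourier inversion together with the standard characteristic-function asymptotics for the $\min(\ga,2)$-stable domain of attraction. The bivariate structure is tractable via the sum-difference change of variables $U_k=\tau_k^{(1)}+\tau_k^{(2)}$, $V_k=\tau_k^{(1)}-\tau_k^{(2)}$: the marginal $U_k$ is a 1D $\ga$-stable domain sum, while conditionally on $U_k$ the variable $V_k$ is a sum of symmetric uniforms with explicit scale, so the joint Fourier transform factors nicely. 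Substituting into the sum, I would parametrize $k=n/\mu+u$ with $u$ on the natural scale $a_n/(\mu\mu_\ga)$, use the regular variation $a_{n/\mu}\sim a_n/\mu_\ga$, and convert the sum to an integral, yielding exactly $c_\ga(t)/a_n$ after a linear change of variable matching the definition $c_\ga(t)=\mu_\ga\int g_\ga(x,x+\mu_\ga t)\,\dd x$. Contributions from $k$ outside a window $|k-n/\mu|\leq Ca_n$ are discarded using Cram\'er-type upper tail bounds for $\tau_k^{(1)}$ (valid because $\bE[\tau_1^{(1)}]<\infty$).

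For the upper bound \eqref{eq:offdiag} with $r\geq a_n$, the natural route is a one-big-jump principle applied to the difference random walk $S_j:=\tau_j^{(1)}-\tau_j^{(2)}$. A direct computation from the joint law $\bP(\tau_1=(a,b))=K(a+b)$ yields that the increment $V:=\tau_1^{(1)}-\tau_1^{(2)}$ is symmetric with tail $\bar F_V(v)\sim c\, L(v)\,v^{-(1+\ga)}$, so $S$ lies in the $\ga$-stable domain of attraction with the same scaling $a_k$. Splitting the event $\{(n,n+r)\in\tau\}$ according to which increment carries the largest transversal displacement, using independence of that increment from the ``typical'' remaining trajectory, and applying the sharp LLT of the previous paragraph to the typical part (at displacement $O(a_n)$) leads to a bound of order $a_n^{-1}\cdot k\cdot \bar F_V(r)$ with $k=O(n)$, that after elementary manipulation and absorbing the slowly varying ratio $L(r)/L(a_n)$ via Potter bounds gives the stated $C a_n^{-1}(r/a_n)^{-(1+\ga)+\gd}$.

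The main obstacle is the bivariate LLT with remainders that are uniform well outside the Gaussian window: standard statements control only the compact-window regime and must be supplemented with truncation-plus-big-jump bounds to reach the polynomial off-diagonal regime required for \eqref{eq:offdiag}. The delicate point is coupling the big-jump decomposition to the LLT without losing the correct slowly varying prefactor, which forces one to iterate the LLT on the remaining $k-1$ ``typical'' increments and carefully aggregate the resulting bound over the position and magnitude of the exceptional jump; taking any $\gd>0$ room in the exponent is exactly what allows the Potter-bound estimates to absorb the slowly varying nuisance terms arising in this aggregation.
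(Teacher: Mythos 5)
The paper does not supply a proof of Theorem~\ref{th:reD}: it is extracted verbatim from \cite{cf:B} (Theorems~3.3 and~4.2, Equations~(3.7) and~(4.5) for the symmetric case), so there is no internal proof to compare against. Your sketch nevertheless outlines essentially the strategy of the cited reference — a bivariate local limit theorem converted to a Riemann sum for the on-diagonal asymptotics, and a one-big-jump decomposition for the off-diagonal polynomial bound — so in spirit the approach is the right one. The sum-difference change of variables and the observation that, conditionally on $U=\tau_1^{(1)}+\tau_1^{(2)}=s$, the transversal increment $V=\tau_1^{(1)}-\tau_1^{(2)}$ is uniform on an interval of length $\asymp s$ is indeed the key structural fact exploited in \cite{cf:B}.

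There is however a concrete error in your off-diagonal argument. You claim that the tail of $V$ satisfies $\bar F_V(v)\sim c\,L(v)\,v^{-(1+\ga)}$, but a direct computation from $\bP(\tau_1=(a,b))=K(a+b)$ gives
\[
\bP(V>v)=\sumtwo{a,b\geq 1}{b-a>v}K(a+b)\ \asymp\ \sum_{s>v}(s-v)\,K(s)\ \asymp\ L(v)\,v^{-\ga},
\]
so the tail exponent is $\ga$, not $1+\ga$; what decays like $L(v)v^{-(1+\ga)}$ is the \emph{pointwise mass} $\bP(V=v)\asymp\sum_{a\geq 1}K(2a+v)$. As written, your tail claim is inconsistent with your own (correct) assertion that $S_j=\tau_j^{(1)}-\tau_j^{(2)}$ lies in the $\min(\ga,2)$-stable domain with the same normalizing sequence $(a_k)$: tail index $1+\ga>2$ would force Gaussian scaling. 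This error propagates to the bound: ``$a_n^{-1}\cdot k\cdot\bar F_V(r)$'' with $k=O(n)$ and the \emph{true} tail $\bar F_V(r)\asymp L(r)r^{-\ga}$ produces $\asymp a_n^{-1}(r/a_n)^{-\ga}\cdot L(r)/L(a_n)$, which is short of the stated $(r/a_n)^{-(1+\ga)+\gd}$ by a full power of $r/a_n$. The missing factor comes from \emph{localizing} the big jump: the exceptional increment must land in a target window of width $\asymp a_n$ around the required displacement $r$ (so that the remaining typical walk can hit the exact endpoint), which replaces $\bP(|V|>r)$ by $\asymp a_n\cdot\bP(V\approx r)\asymp a_n\,L(r)r^{-(1+\ga)}$ and yields, after summing over the big-jump position and the starting block, $\bP((n,n+r)\in\tau)\lesssim n\,L(r)\,r^{-(1+\ga)}\asymp a_n^{-1}(r/a_n)^{-(1+\ga)}\tfrac{L(r)}{L(a_n)}$, which is then $\leq C\,a_n^{-1}(r/a_n)^{-(1+\ga)+\gd}$ by Potter bounds. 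So the right answer does emerge from the one-big-jump strategy, but the extra power comes from localization, not from the tail of $V$, and the argument as you wrote it would not close without this correction.
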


Theorems~\ref{th:re01} and \ref{th:reD} are extracted from \cite{cf:B}, Theorems~3.1, 4.1 and Theorems~3.3, 4.2 respectively,: we refer to Equations (3.4), (4.2) and (3.7), (4.5) in \cite{cf:B} respectively, for a statement  in the symmetric setting we are considering here.

\begin{proof}[Proof of Proposition \ref{prop:transience}]

{\bf Case $\ga<1$.}
Notice that by symmetry, for $M\geq N$,
\[U_{N,M} \leq U_{M,M} \leq 2 \sum_{n=1}^M \sum_{r=0}^{M-n} \bP((n,n+r) \in\tau)^2\, .\]
We therefore need to control the last sum. Let us denote
\[W_n:=\sum_{r\geq 0} \bP((n,n+r) \in\tau)^2 \, .\]
Using Theorem \ref{th:re01} (and properties of slowly varying functions), we get that there is a constant $c$ such that for all $n\geq 1$
\begin{align*}
W_n \leq c\sum_{r=1}^n  L(n)^{-2} n^{-2(2-\ga)} + c\sum_{r\geq n} L(n)^{-2} n^{-2(2-\ga)}  (r/n)^{-2}
 \leq C' L(n)^{-2} n^{ 2\ga -3},
\end{align*}
where we used that $\sum_{r\geq n} (r/n)^{-2} \sim n \int_1^{\infty} x^{-2}$ as $n\to\infty$.
Therefore, since $\ga<1$, we get
\[\sup_{N,M} U_{N,M}  \leq \sum_{n=1}^{+\infty} W_n <+\infty \, .\]

\medskip

{\bf Case $\ga > 1$.}
First of all, it is immediate that $\sum_{n=1}^N 1/a_n$ diverges as a slowly varying function with exponent $\rho$, since $a_n \sim \psi(n) n^{\frac{1}{\min(\ga,2)}}$, see \eqref{an}: it directly gives \eqref{suman}.
%If $\ga=1,\mu<+\infty$, we show that $\psi(n)=a_n/n\to 0$.
%Therefore, $D_N:=\sum_{n=1}^N (a_n)^{-1}=\sum_{n=1}^n \psi(n)^{-1} n^{-1}$,  diverges to $+\infty$ as a slowly varying function, and $D_N/\log N \to+\infty$, giving also \eqref{suman} in that case.

%We have that $L(n)n^{-1}$ is asymptotically decreasing, there exists a slowly varying function $\mathbf{L}$ and a constant $c$ such that 
%$ \mathbf{L}(n)n^{-1} \leq L(n)n^{-1} \leq c\mathbf{L}(n) n^{-1}$, and with  $\mathbf{L}(n) n^{-1}$  non-increasing in $n$.
%Hence, because $\sum_{n\geq 1} L(n) n^{-1} <+\infty $ (since $\mu<+\infty$), we have that $\sum_{n\geq 1} \mathbf{L}(n)n^{-1} <+\infty $, and because $\mathbf{L}(n)n^{-1}$ is non-increasing we get that 
%\[
%k\times \frac{\mathbf{L}(2k)}{2k} \leq \sum_{n =k}^{2k} \mathbf{L}(n)n^{-1} \to 0, \quad \text{as } k\to\infty.
%\]
%We therefore get that $\mathbf{L}(n)\to 0$ as $n\to\infty$, which yields that $L(n)\to 0$, and $\psi(n)\to 0$ because of \eqref{def:an}.

We now prove \eqref{eq:asympUn}.
We fix $\gep>0$, and denote, in complement to the definition of $W_n$ above
\[W_n^{(\gep)} := \sum_{r = 0}^{ \lfloor \frac1\gep a_n \rfloor} \bP((n,n+r) \in \tau)^2\, . \]

As a preliminary, we show that there exists some $n_\gep$ such that, provided that $n\geq n_{\gep}$
\begin{equation}
\label{eq:sumk}
(1-\gep) \frac{c_{\ga}}{a_n} \leq W_n^{(\gep)}  \leq  W_n  \leq (1+\gep) \frac{c_{\ga}}{a_n}.
\end{equation}
Note that we also have that $W_n \leq \sum_{r = 0}^{+\infty} \bP((n,n+r) \in \tau) \leq 1$ for any $n$.

To prove \eqref{eq:sumk}, we use Theorem \ref{th:reD} to get that uniformly for $0\leq r\leq \tfrac 1\gep a_n$, we have $\bP((n,n+r) \in \tau)^2 \sim (a_n)^{-2} c_{\ga}(r/a_n)^2$ as $n\to\infty$. Hence, provided that $n$ is large enough, we get that 
\[ a_n W_n^{(\gep)} \geq (1-\gep^2) \frac{1}{a_n} \sum_{r=0}^{\lfloor\frac1\gep a_n \rfloor} c_{\ga}(r/a_n)^2 \geq (1-2\gep^2) \int_{0}^{1/\gep} c_{\ga}(t)^2 \mathrm{d} t\, ,  \]
the last inequality holding by Riemann-sum approximation. 
Note that a similar upper bound, with $1-2\gep^2$ replaced with $1+2\gep^2$ holds.
Now, thanks to \eqref{eq:offdiag} (and since $1+\ga-\gd \geq 3/2$), there exists a constant $c>0$ such that
\[a_n(W_n-W_n^{(\gep)}) = a_n \sum_{r > \frac1\gep  a_n}^{+\infty} \bP((n,n+k) \in \tau)^2 \leq c\, \frac{1}{a_n} \sum_{r>\frac1\gep  a_n} \Big(\frac{r}{a_n} \Big)^{-3} \leq c' \gep^2 ,\]
where the last inequality also comes from a Riemann-sum approximation.
Finally, note that $c_\ga -\int_{0}^{1/\gep} c_{\ga}(t)^2 \mathrm{d} t $ is positive, and thanks to \eqref{eq:offdiag} smaller than $\int_{1/\gep} c t^{-3} \mathrm{d} t \leq c'' \gep^2$.
In the end, we get that, provided that $n$ is large enough,
\begin{equation}
(1-2\gep^2) (c_{\ga} - c'' \gep^2) \leq a_n W_n^{(\gep)} \leq  a_n W_n  \leq (1+2\gep^2) c_{\ga} + c' \gep^2,
\end{equation}
which gives \eqref{eq:sumk} provided that $\gep$ has been fixed small enough.

\smallskip
We are now ready to estimate $U_{N,N}$. We write
\[U_{N,N} = 2\sum_{ n=0}^{N}  \sum_{r=0}^{N-n} \bP((n,n+r) \in\tau)^2   - \sum_{n=0}^N \bP((n,n)\in\tau)^2. \]
The second sum is negligible compared to $\sum_{n=1}^N \tfrac{1}{a_n}\sim  \varphi(N) N^{\rho}$, since $\bP((n,n)\in\tau)^2 \sim c (a_n)^{-2}$, with $a_n \to +\infty$. We therefore focus on the first sum.

An upper bound is simply
\[ \sum_{n=0}^N \sum_{r=0}^{N-n} \bP((n,n+r) \in\tau)^2 \leq \sum_{n=0}^N W_n ,\]
and since we have that $W_n \sim c_{\ga}/a_n$ together with \eqref{suman}, we get that for $n$ large enough
\[\sum_{n=0}^N \sum_{r=0}^{N-n} \bP((n,n+r) \in\tau)^2 \leq (1+2\gep) c_{\ga} \sum_{n=1}^N \frac{1}{a_n} \leq (1+3\gep)  \varphi(N) N^{\rho}\, .\]
For a lower bound, because $a_N\leq \gep N$ provided that $N$ is large enough, we have
\[\sum_{ n=0}^{N}  \sum_{k=0}^{N-n} \bP((n,n+k) \in\tau)^2  \geq \sum_{n=n_{\gep}}^{(1-\gep) N} W_n^{(\gep)} \geq (1-2\gep) c_{\ga} \sum_{n=1}^{(1-\gep)N} \frac{1}{a_n}  \geq (1- c\gep)c_{\ga}  \varphi(N) N^{\rho} ,\]
 where we used the lower bound \eqref{eq:sumk} valid for $n$ large enough, together with \eqref{suman} for the last inequality.

\smallskip
We now turn to estimating $\hat U (\gl)$ as $\gl \downarrow 0$.
By symmetry, we can write that
\[\hat U(\gl)  = 2\sum_{n=0}^{+\infty} e^{-2 \gl n}\sum_{r=0}^{+\infty}  e^{-\gl r} \bP((n,n+r)\in\tau)^2 - \sum_{n=0}^{+\infty} e^{-2\gl n} \bP((n,n)\in\tau)^2 .\]
The second term is negligible compared to $ \varphi(1/\gl) \gl^{-\rho}$ as $\gl \downarrow 0$, since $\sum_{n=0}^N \bP((n,n)\in\tau)^2$ is negligible compared to $ \varphi(N)N^{\rho}$, by standard properties of Laplace transforms, and we again focus on the first term.

First of all, an upper bound is
\[\sum_{n=0}^{+\infty} e^{-2 \gl n}\sum_{r=0}^{+\infty}  e^{-\gl r} \bP((n,n+r)\in\tau)^2 \leq \sum_{n=0}^{+\infty} e^{-2\gl n} W_n \,. \]
Since $\sum_{n=0}^{N}  W_n \sim c_{\ga}  \varphi(N)N^{\rho}$, we get by standard properties of Laplace transforms (see Corollary~1.7.3 in \cite{cf:BGT}) that
\[\sum_{n=0}^{+\infty} e^{-2\gl n} W_n \sim  \frac{c_\ga}{\Gamma(1+\rho )}   \varphi(1/2\gl) (2\gl)^{-\rho}  \qquad  \text{as } \gl \downarrow 0 \, .\]

For a lower bound, we get that 
\[\sum_{n=0}^{+\infty} e^{-2 \gl n}\sum_{r=0}^{+\infty}  e^{-\gl r} \bP((n,n+r)\in\tau)^2  \geq \sum_{n=0}^{+\infty} e^{-2\gl n}  e^{-\gl a_n /\gep} W_n^{(\gep)} \, .\]
Now, we use  that there is some $n_{\gep}$ such that for $n\geq n_{\gep}$ we have that $W_n^{(\gep)}\geq (1-\gep) c_{\ga} /a_n$ (see \eqref{eq:sumk}), and that $a_n/\gep \leq \gep n$.
We therefore get that
\begin{align*}
\sum_{n=0}^{+\infty} e^{-2 \gl n}&\sum_{r=0}^{+\infty}  e^{-\gl r} \bP((n,n+r)\in\tau)^2 \\
& \geq (1-\gep) c_\ga \sum_{n=n_{\gep}}^{+\infty} e^{-2(1+\gep)\gl n} \frac{1}{a_n} \stackrel{\gl\to 0}{\sim}  \frac{(1-\gep) c_\ga}{\Gamma(1+\rho )}   \varphi(1/\gl) ( 2(1+\gep) \gl )^{-\rho} \, ,
\end{align*}
where we used again Corollary 1.7.3 in \cite{cf:BGT} for the last asymptotics.

By letting $\gep \downarrow 0$, we obtain matching upper and lower bound, so that \eqref{eq:asympUn} is proved.

\end{proof}

We now use Proposition \ref{prop:transience}, and in particular the estimate of the Laplace transform $\hat U(\gl)$, to obtain estimates on the tail probability of the intersection renewal $\sigma=\tau\cap \tau'$.
More precisely, we define $\underline{\sigma}:= \sigma^{(1)} + \sigma^{(2)}$ and estimate $\bP^{\otimes 2}( \underline \sigma _1 >N)$.

\begin{lemma}
\label{lem:tau1}
Assume that $\ga>1$.
Then recalling that $\rho=1-\min(\ga,2)^{-1} \in[0,1/2]$, we get that 
\begin{equation}
\bP^{\otimes 2} \left( \underline{\sigma}_1 > N \right) \, \stackrel{N \to \infty}\sim \,  \frac{2^{\rho} \sin(\pi \rho)}{ \pi \rho} \ (U_{N,N})^{-1} \stackrel{N \to \infty}\sim C_{\ga,\rho} \, \varphi(N)^{-1} N^{-\rho} \,.
\end{equation}
\end{lemma}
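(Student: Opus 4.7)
The plan is to view $\underline{\sigma}=(\underline\sigma_k)_{k\geq 0}$ as an ordinary one-dimensional renewal process and invert the already-computed Laplace transform $\hat U(\lambda)$ of Proposition~\ref{prop:transience} to read off the tail of $\underline\sigma_1$.

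First I would observe that, because $\tau$ and $\tau'$ are independent copies of a bivariate renewal, $\sigma=\tau\cap\tau'$ is itself a bivariate renewal on $\bbN^2$, persistent when $\alpha>1$ by Proposition~\ref{prop:transience}; hence $\underline\sigma_k=\sigma_k^{(1)}+\sigma_k^{(2)}$ is a persistent 1D renewal process with i.i.d.\ inter-arrivals distributed like $\underline\sigma_1$. Its renewal mass function satisfies
\[
V(N):=\sum_{k\geq 0}\bP^{\otimes 2}(\underline\sigma_k=N)
=\sum_{n+m=N}\bP^{\otimes 2}((n,m)\in\sigma)
=\sum_{n+m=N}\bP((n,m)\in\tau)^2,
\]
so its Laplace transform is exactly $\hat V(\lambda)=\sum_{N\geq 0}e^{-\lambda N}V(N)=\hat U(\lambda)$, whose asymptotic behavior is given in Proposition~\ref{prop:transience}.

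Next I would apply the renewal identity $\hat V(\lambda)=1/\bigl(1-\bE^{\otimes 2}[e^{-\lambda\underline\sigma_1}]\bigr)$ (valid since $\underline\sigma$ is proper) to obtain
\[
1-\bE^{\otimes 2}[e^{-\lambda\underline\sigma_1}]\,\sim\,\frac{c_\rho\,\lambda^\rho}{\varphi(1/\lambda)}\qquad\text{as }\lambda\downarrow 0,
\]
for an explicit constant $c_\rho$ built from $\rho$, $c_\alpha$ and the asymptotic constant in $\hat U$. Combined with the integration-by-parts identity $1-\bE^{\otimes 2}[e^{-\lambda\underline\sigma_1}]=\lambda\int_0^\infty e^{-\lambda t}\bP^{\otimes 2}(\underline\sigma_1>t)\,\dd t$, this yields a regular-variation asymptotic for the Laplace transform of the non-increasing function $t\mapsto\bP^{\otimes 2}(\underline\sigma_1>t)$. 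Karamata's Tauberian theorem (monotone-density version, as in Theorems~1.7.1 and 1.7.2 of Bingham--Goldie--Teugels) then gives
\[
\bP^{\otimes 2}(\underline\sigma_1>N)\,\sim\,\frac{c_\rho}{\Gamma(1-\rho)}\,\varphi(N)^{-1}N^{-\rho}.
\]
At this stage the two gamma factors picked up from the Tauberian step and from the Laplace-transform asymptotic combine through the reflection formula $\Gamma(1+\rho)\Gamma(1-\rho)=\pi\rho/\sin(\pi\rho)$, and using $U_{N,N}\sim 2c_\alpha\varphi(N)N^\rho$ from Proposition~\ref{prop:transience} rewrites the result as $\frac{2^\rho\sin(\pi\rho)}{\pi\rho}\,U_{N,N}^{-1}$.

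The main obstacle is purely bookkeeping of constants: one must track the $2^{1-\rho}$, $\Gamma(1+\rho)$ and $c_\alpha$ factors coming out of $\hat U(\lambda)$, invert them through $1-\bE^{\otimes 2}[e^{-\lambda\underline\sigma_1}]=1/\hat V(\lambda)$, and then carry them through the Tauberian step, checking that the $\Gamma$'s collapse via the reflection formula to the advertised $\sin(\pi\rho)/(\pi\rho)$. No additional probabilistic input beyond the persistence of $\sigma$ (which gives the geometric-series form of $\hat V$) and the monotonicity of $t\mapsto\bP^{\otimes 2}(\underline\sigma_1>t)$ (which justifies Karamata's monotone-density step) is needed.
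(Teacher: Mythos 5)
Your proposal is essentially the paper's own argument, phrased through a mildly different Tauberian route. Both proofs start from the same two inputs: the one-dimensional renewal identity for the compressed process $\underline\sigma$, which gives $1-\bE^{\otimes 2}[e^{-\lambda\underline\sigma_1}]=1/\hat U(\lambda)$, and the regular-variation asymptotic for $\hat U$ from Proposition~\ref{prop:transience}. The paper then passes through the partial sums $\sum_{n\le N}u_n$ of the renewal mass function (using Karamata, BGT Corollary~1.7.3) and invokes the infinite-mean renewal theorem (BGT Theorem~8.7.3) to relate $\sum_{n\le N}u_n$ to $\bP^{\otimes 2}(\underline\sigma_1>N)$, whereas you integrate by parts and apply Karamata plus the monotone-density theorem directly to the survival function $t\mapsto\bP^{\otimes 2}(\underline\sigma_1>t)$. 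These are equivalent standard routes through the same Tauberian machinery, and the reflection formula collapses the $\Gamma$-factors in both.

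One caution on the "bookkeeping of constants" you defer: if you feed into your chain the formula for $\hat U(\lambda)$ exactly as printed in Proposition~\ref{prop:transience}, you will get an answer off by a factor $\Gamma(1+\rho)^2$. The culprit is that the displayed asymptotic $\hat U(\lambda)\sim \frac{2^{1-\rho}c_\ga}{\Gamma(1+\rho)}\varphi(1/\gl)\gl^{-\rho}$ has the $\Gamma(1+\rho)$ on the wrong side: by Karamata applied to $\sum_{n\le N}W_n\sim c_\ga\varphi(N)N^\rho$, one should have $\hat U(\lambda)\sim 2^{1-\rho}c_\ga\Gamma(1+\rho)\varphi(1/\gl)\gl^{-\rho}$. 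The paper's own proof of Lemma~\ref{lem:tau1} then writes $\sum_{n\le N}u_n\sim\Gamma(1+\rho)\hat U(1/N)$, which also has the $\Gamma(1+\rho)$ on the wrong side (Karamata gives $\sum_{n\le N}u_n\sim\hat U(1/N)/\Gamma(1+\rho)$); the two slips cancel, so the intermediate conclusion $\sum_{n\le N}u_n\sim 2^{-\rho}U_{N,N}$ and the final statement of the lemma are both correct. Your route avoids this cancellation pattern, so if you carry it out you must use the corrected $\hat U$ asymptotic; do so and you will recover exactly $\frac{2^{\rho}\sin(\pi\rho)}{\pi\rho}U_{N,N}^{-1}$ via $\Gamma(1+\rho)\Gamma(1-\rho)=\pi\rho/\sin(\pi\rho)$, as claimed.

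Finally, your observation that $\underline\sigma_k-\underline\sigma_{k-1}\ge 2$ (so distinct anti-diagonals are visited and $V(N)=\sum_{n+m=N}\bP((n,m)\in\tau)^2$ is indeed the renewal mass function with Laplace transform $\hat U$) is the correct and necessary justification for treating $\underline\sigma$ as a proper one-dimensional renewal; the paper uses the same fact implicitly.
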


\begin{proof}
Recall the definition of $\hat U (\gl) = \sum_{n,m \ge 0} e^{- \lambda (n+m)} \bP^{\otimes 2} \left( (n,m) \in \sigma \right) $.
We also set, for any $\gl>0$,
\begin{equation}
\hat{K}(\lambda) \, := \, \sum_{n,m \ge 1} e^{- \lambda (n+m)} \bP^{\otimes 2} (\sigma_1 = (n,m) ) \, = \, \sum_{k \ge 2} e^{- \lambda k} \bP^{\otimes 2} ( \underline{\sigma}_1 = k ) \,.
\end{equation}
The key idea of this proof is the following identity
\begin{equation}
\label{eq:id}
\hat U(\gl) = 1+ \hat K(\gl) \hat U(\gl) \quad\Leftrightarrow \quad 
1- \hat{K}(\lambda)  \, = \, \frac{1}{\hat{U}(\lambda)} \,,
\end{equation}
which is obtained from the identity
\begin{equation}
\bP^{\otimes 2} \left( (n,m) \in \sigma \right)  = \ind_{\{n=m=0\}} + \sum_{i=1}^{n}\sum_{j=1}^{m} \bP^{\otimes 2} (\sigma_1 = (i,j) ) \bP^{\otimes 2} ( (n-i,n-j) \in\sigma ). 
\end{equation}

Now, since we know the behavior of $\hat U(\gl)$ as $\gl\downarrow0$, we get the behavior of $\hat K(\gl)$, from which we should be able to infer that of $\bP^{\otimes 2}(\underline \sigma_1 >N)$. Let us develop here how we proceed: we use Corollary 1.7.3 and Theorem 8.7.3 in \cite{cf:BGT}. We can view $\underline\sigma$ as a renewal process with inter-arrival distribution $\bP^{\otimes 2} (\underline \sigma_1=k) = \bP^{\otimes 2} ( \sigma_1^{(1)} + \sigma_1^{(2)} =k)$, and we set $u_n:= \bP^{\otimes 2} (n \in \underline \sigma)$ its renewal mass function, so we have $\hat U(\gl) = \sum_{n=0}^{\infty} e^{-\gl n} u_n$ (and \eqref{eq:id} is standard from the one-dimensional renewal equation). Now, \cite[Corollary 1.7.3]{cf:BGT} tells that since $\hat U(\gl)$ is regularly varying with exponent $-\rho$ (recall $\rho = 1-\min(\ga,2)^{-1}$), we have that $\sum_{n=0}^N u_n \sim \Gamma(1+\rho) \hat U(1/N) \sim 2^{-\rho} U_{N,N}$ (where we used \eqref{eq:asympUn}).
In turn \cite[Theorem 8.7.3]{cf:BGT} gives that 
\[\bP^{\otimes 2} (\underline\sigma _1 >N) \stackrel{N\to\infty}{\sim}  \frac{(2^{-\rho} U_{N,N})^{-1}}{\Gamma(1+\rho) \Gamma(1-\rho) },\]
and we are done.
\end{proof}

\begin{rem}\rm
\label{rem:alpha=1}
The case $\ga=1$  has been left aside, mostly to avoid.
Denote $\mu(n):=\bE[\min(\tau_1^{(1)}, n)]$ the truncated first moment of $\tau_1^{(1)}$.
It is shown in \cite[Theorem~3.4]{cf:B} (or (3.11) in the symmetric context) that along the favorite direction, for $n\to \infty$ and $r$ with $r/a_{n/\mu(n)} \to t\in \bbR_+$, we have 
\begin{equation}
\label{alpha=1}
\bP((n,n+r)\in\tau) \sim  \frac{c_1(t)}{\mu(n) a_{n/\mu(n)}} \,,
\end{equation}
with $c_1(t):= \int_{-\infty}^{+\infty} g_{\alpha}(x,(1+t)x) dx$. Notice that $n/\mu(n)$ is the typical number of steps to reach distance $n$.
Again, estimates away from the favorite direction are provided in \cite[Theorem~4.2]{cf:B} (or (4.6) in the symmetric case): for any $\gd>0$, there is a constant $C_{\gd}$ such that for any $r \ge a_{n/\mu(n)}$,
\begin{equation}
\label{alpha=1away}
\bP((n,n+r)\in\tau) \le   \frac{C_{\gd}}{\mu(n) a_{n/\mu(n)}} \, \Big( \frac{r}{\mu(n) a_{n/\mu(n)}} \Big)^{-2+\gd} \, .
\end{equation}
This shows that the main contribution to $U_{N,N}$ comes also here from the terms close to the diagonal, that is
\begin{align*}
U_{N,N} &\asymp 2\sum_{n=1}^N  \sum_{r=0}^{a_{n/\mu(n)}} \bP((n,n+r)\in\tau)^2  \asymp \sum_{n=1}^N \frac{1}{\mu(n)^2 a_{n/\mu(n)}}\,  .
\end{align*}
(We denoted $x_n\asymp y_n$ if $x_n/y_n$ is bounded away from $0$ and $+\infty$.)
Let us stress that we have $\mu(n) \sim  \mu(a_{n/\mu(n)})$ (this comes from \cite[Lemma~4.3]{cf:B17}): by a change of variable $x= n/\mu(n)$ (comparing the sum to an integral, and considering $\mu(n),a_n$ as functions of positive real numbers), we get that
\[U_{N,N}\asymp \int_{1}^{N/\mu(N)} \frac{\mathrm{d} x}{ a_{x} \mu(a_x)} \asymp  \int_1^{a_{N/\mu(N)}} \frac{\mathrm{d} u}{ u L(u) \mu(u)} \, ,\]
where we used another change of variables $u =a_x$ ($\mathrm{d}x  \sim L(u)^{-1}\mathrm{d}u$, since $n\sim a_n/L(a_n)$).
As a conclusion, we expect to have the following criterion:
\[\sigma=\tau\cap \tau' \text{ is persistent } \quad \Leftrightarrow \quad  \sum_{n\geq 1} \frac{1}{a_n \mu(a_n)} =+\infty \quad \Leftrightarrow \quad  \sum_{n\geq 1} \frac{1}{n L(n)\mu(n)} =+\infty .\]
%and one should have that $U_{N,N} \sim c \int_1^{a_{N/\mu(N)}} \frac{\mathrm{d} u}{ u L(u) \mu(u)}$ for some constant $c$.
As an example, if $L(n) =(\log n)^{\kappa}$ with $\kappa \geq -1$, then $\mu(n) \sim c_{\kappa} \max(1, (\log n)^{1+\kappa})$ and  hence $\sigma=\tau\cap \tau'$ should be persistent if and only if $\kappa>0$. 
\end{rem}

\end{appendix}

\end{document}